\newcommand{\convas}[1][]{\xrightarrow[#1]{\mathrm{a.s.}}}
\newcommand{\convp}[1][]{\xrightarrow[#1]{\mathrm{p}}}
\newcommand{\convl}[1][]{\xrightarrow[#1]{\mathrm{law}}}
\newcommand{\convsl}[1][]{\xrightarrow[#1]{\mathrm{sl}}}
\newcommand{\eqlaw}{\stackrel{\mathrm{law}}{=}}
\numberwithin{equation}{section}
\newcommand{\vd}{\,\mathrm{d}}
\newtheorem{theorem}{Theorem}[section]
\newtheorem{itlemma}[theorem]{Lemma}
\newtheorem{itproposition}[theorem]{Proposition}
\newtheorem{itcorollary}[theorem]{Corollary}
\newtheorem{itremark}[theorem]{Remark}
\newtheorem{itdefinition}[theorem]{Definition}
\newtheorem{itexample}[theorem]{Example}
\newtheorem{itclaim}[theorem]{Claim}
\newtheorem{itfact}[theorem]{Fact}
\newtheorem{itassumption}[theorem]{Assumption}
\newenvironment{lemma}{\begin{itlemma}}{\end{itlemma}}
\newenvironment{remark}{\begin{itremark}\rm}{\end{itremark}}
\newenvironment{corollary}{\begin{itcorollary}}{\end{itcorollary}}
\newtheorem*{rep@theorem}{\rep@title} \newcommand{\newreptheorem}[2]{%
\newenvironment{rep#1}[1]{%
\def\rep@title{\bf #2 \ref{##1} }%
\begin{rep@theorem} }%
{\end{rep@theorem} } }
\DeclareMathOperator{\sgn}{sgn}
\DeclareMathOperator{\leb}{Leb}
\def \R {{\mathbb R}}
\def \N {{\mathbb N}}
\def \PR {\mathbb{P}}
\def \QR {\mathbb{Q}}
\def \E {\mathbb{E}}
\def \FF {{\mathcal{F}}}
\newcommand\given{\nonscript\:\delimsize\vert\nonscript\:\mathopen{}} 
\newcommand\SetSymbol[1][]{\nonscript\:#1\vert\nonscript\:\mathopen{}\allowbreak}
\DeclarePairedDelimiterX\Set[1]\{\}{\renewcommand\given{\SetSymbol[\delimsize]}#1}
\DeclarePairedDelimiterX\Prb[1][]{\renewcommand\given{\SetSymbol[\delimsize]}#1}
\def\ve{\varepsilon}
\def\s{\sigma}
\def\l{\lambda}
\def\L{\Lambda}
\def\G{\mathcal{G}}
\def\a{\alpha}
\def\th{\theta}
\def\b{\beta}
\def\cG{\mathcal{G}}
\def\cF{\mathcal{F}}
\newcommand{\Keywords}[1]{\par\noindent 
{\small{\em Keywords\/}: #1}}
\title{Statistical estimation of the Oscillating Brownian Motion}
\author{Antoine Lejay\thanks{
	Université de Lorraine,
IECL, UMR 7502, Vand\oe uvre-lès-Nancy, F-54500, France;\newline
CNRS, IECL, UMR 7502, Vand\oe uvre-lès-Nancy, F-54500, France;\newline
Inria,  Villers-lès-Nancy, F-54600, France\newline
email: \texttt{Antoine.Lejay@univ-lorraine.fr}
} \and Paolo Pigato\thanks{
	Université de Lorraine,
IECL, UMR 7502, Vand\oe uvre-lès-Nancy, F-54500, France;\newline
CNRS, IECL, UMR 7502, Vand\oe uvre-lès-Nancy, F-54500, France;\newline
Inria,  Villers-lès-Nancy, F-54600, France\newline
email: \texttt{Paolo.Pigato@inria.fr}
} }
\begin{document}

\date{\today}

\maketitle

\begin{abstract} 
We study the asymptotic behavior of estimators of a two-valued, discontinuous
diffusion coefficient in a Stochastic Differential Equation, called an Oscillating
Brownian Motion. Using the relation of the latter process with the Skew Brownian Motion, 
we propose two natural consistent estimators, which are variants of the integrated volatility
estimator and take the occupation times into account. We show the stable convergence of the renormalized errors' estimations 
toward some Gaussian mixture, possibly corrected by a term that depends on the local
time. These limits stem from the lack of ergodicity as well as the behavior of
the local time at zero of the process.  We test both estimators on simulated
processes, finding a complete agreement with the theoretical predictions.
\end{abstract}

\smallskip
\Keywords{Oscillating Brownian Motion, Gaussian mixture, local time, occupation time, Arcsine distribution,
Skew Brownian Motion}


\section{Introduction}

Diffusion processes with discontinuous coefficients attract more and more
attention for simulation and modelling purposes (see references in \cite{lejay_2012}).
Many domains are actually concerned, such as  geophysics \cite{ramirez_2013}, 
population ecology \cite{cc,Cantrell:1999iv}, finance \cite{interestrate,liptonsepp},
\dots\ among others.  
From a theoretical point of view, diffusions with discontinuous coefficients 
are an instance of Stochastic Differential Equations (SDE) with local time ---~also 
called \emph{skew diffusion}~--- for which many results are contained in the work of J.-F. Le Gall \cite{legall}.

Estimation come together with simulation and modelling, as models 
need to be calibrated. This article deals with the parametric estimation of the coefficients 
of a one-dimensional SDE of type 
\begin{equation}
    \label{eq:1}
    Y_t=x+\int_0^t \sigma(Y_s)\vd W_s+\int_0^t b(Y_s)\vd s
\end{equation}
where $W$ is a Brownian motion and $x\mapsto \sigma(x)$ takes two values $\Set{\sigma_+,\sigma_-}$
according to the sign of $x$, when the process $Y$ is observed at discrete times
$iT/n$, $i=0,\dotsc,n$ up to a time~$T$.
In a first part, we consider that \eqref{eq:1} contains no drift ($b=0$).
The solution $Y$ to \eqref{eq:1} is called an \emph{Oscillating Brownian Motion} (OBM). 
This process was studied by J.~Keilson and J.A. Wellner in~\cite{kw} who give some 
of its main properties and close form expressions for its density and occupation time.

We provide two very simple estimators which generalize the 
\emph{integrated volatility} (or averaged squared)
estimator (in finance, $\sigma$ is called the \emph{volatility}
when $Y$ represents the logarithm of the price of an asset). 
For two processes $Z,Z'$, we set 
\begin{equation*}
    [Z,Z']^n_T:=\sum_{i=1}^n (Z_{i,n}-Z_{i-1,n})(Z'_{i,n}-Z'_{i-1,n})
    \quad\text{and}\quad
    \bar{Q}^n_T(Z,+):=\frac{1}{n}\sum_{i=1}^{n} \mathbf{1}_{Z_{i,n}\geq 0}
\end{equation*}
with $Z_{i,n}=Z_{iT/n}$.
The quantity $\bar{Q}^n_T(Z,+)$ is an approximation of the \emph{occupation time}
of $Z$ on the positive axis up to time $T$.
Our estimator $(\hat{\sigma}_+^n)^2$ of $\sigma_+^2$ is then $[Y^+,Y^+]/\bar{Q}^n_T(Y,+)$,
where $Y^+$ is the positive part of $Y$.
A similar estimator is defined for $\sigma_-^2$. 
Although an analytic form is known for the density, this estimator 
is simpler to implement than the Maximum Likelihood Estimator. Besides, 
it also applies when $b\not=0$,  while explicit expressions for the density
become cumbersome, at best \cite{lejay_2016,sm}.

We show first that $\hat{\sigma}^n_+$ is a consistent estimator of $\sigma_+$. 
Yet it is asymptotically biased. We also prove that $\sqrt{n}((\hat{\sigma}^n_+)^2-\sigma_+^2)$
converges stably to a mixture of Gaussian distributions (in which, unsurprisingly, the occupation
time of the positive side appears) plus an explicit term giving the bias.

When estimating $\s_+$, the actual size of the ``useful'' sample is
proportional to the occupation time of $\R^+$. Therefore,
a dependence on the occupation time is to be expected in any
reasonable estimator.  The law of the occupation time for the OBM follows an
arcsine type distribution, which generalizes the one of the Brownian motion. Since these laws
carry much mass close to the extremes, this amounts to say that many
trajectories of this process spend most of the time on the same side of $0$. 
Therefore, with a high probability, either $\sigma_+$ or $\sigma_-$ is only based
on few observations. This affects our central limit theorem as well 
as the quality of the estimation of $(\sigma_+,\sigma_-)$,
meaning that the limit law will not be a Gaussian one, as one would expect from
the approximation of quadratic variation, but a Gaussian mixture 
displaying heavy tails.

Another tool of importance in this framework, strictly connected with the occupation time, is 
the \emph{local time}. Given a stochastic process $Y$, its local time at a
point $x$, denoted by  $\Set{L^x_t(Y)}_{t\geq 0}$,
represents the time spent by $Y$ at $x$, properly re-scaled. It has a
fundamental role in the study of SDEs with discontinuous coefficients.  
Intuitively, the local time appears when dealing with discontinuous coefficients because 
it helps to quantify what happens locally at the discontinuity. A  Lamperti's type
transform applied with the help of the Itô-Tanaka formula shows that $Y$ 
is intimately related to the \emph{Skew Brownian Motion} (SBM, see \cite{harrison-shepp_1981,le}) $X$, the 
solution to the SDE
\begin{equation*}
X_t=x+W_t+\theta L_t^0(X),\ -1<\theta<1,
\end{equation*}
through a simple deterministic transform $X=\Phi(Y)$ \cite{etore,lejaymartinez}.

In the present paper, the local time plays an important role for two reasons. 
First, because we use the transform $X=\Phi(Y)$ to apply some convergence
results which extend to the SBM some results of J.~Jacod on the asymptotic 
behavior of quantities of type $n^{-1/2}\sum_{i=1}^n f(X_{i-1,n},X_{i,n})$. 
Second, because the local time itself appears in the limit of the above quantities. 
Actually, the asymptotic bias is related to the local time. 

We then provide a second simple estimator of $\sigma_\pm^2$, defined as $(m^n_\pm)^2=[Y^\pm,Y]^n_T/\bar{Q}^n_T(Y,\pm)$
which is also consistent. We show that $\sqrt{n}((m^n_\pm)^2-\sigma_\pm^2)$ converges stably to a Gaussian mixture.
The asymptotic bias observed in $\sqrt{n}((\hat{\sigma}^n_\pm)^2-\sigma^2_\pm)$
is removed through the introduction of the quadratic term 
$[Y^+,Y^-]$ which is related to the local time. The variance of the former limit is not larger than the one
of the latter.

In Corollary \ref{girsanov}, we also generalize  these convergence results in presence of a bounded drift term. 
We prove that the estimators mentioned above converge to analogous limit random variables,
depending on the occupation time of the SDE under study.
Unlike for the OBM, the limit law is not explicit for SDEs with general drift,
since the law of the occupation time is not know, if not in special cases
(see \textit{e.g.} \cite{watanabe,watanabe1,kasahara}).

The novelty of the paper lies in the treatment of a discontinuous diffusion coefficient. 
This implies a drastic difference with the case of regular coefficients, as the situation 
cannot be reduced to a Brownian one (the measure of the SBM being singular with respect to the one 
of the Brownian motion \cite{legall}). This explains the presence of an asymptotic bias 
for  $\hat{\sigma}^n_\pm$, which is removed by a correction (leading to  $m^n_\pm$) which 
involves only a fraction of order $n^{-1/2}$ of the observations.

Besides, the framework is not the one of ergodic processes, like for many estimators, but of null recurrent ones.  
On the last point, our study does not fit the situations considered \textit{e.g.}, in \cite{hopfner,hopfner2,ac}.

With respect to many estimators constructed for diffusion processes, the lack of
ergodicity of the process explains the presence of a mixed normal
distribution in the limit. For diffusions, asymptotic convergence involving a
mixture of normal distributions (with different type of limits) is already
observed in the works of F. Florens-Zmirou~\cite{florens-zmirou} for
non-parametric estimation, and of J.~Jacod~\cite{jp,j1}, from which we borrow and adapt the
general treatment. The core of our proof requires the adaptation to the SBM of some results on the convergence
toward the local time given in \cite{j1}.


\subsection*{Content of the paper}

In Section \ref{sectionOBM} we define the Oscillating Brownian Motion (OBM) and
recall some useful properties. In Section~\ref{results} we define our
estimators and state precisely the convergence theorems. 
These results are then proved is Section~\ref{proofs}.
In Section~\ref{sectionORW} we consider the Oscillating Random
Walk, a discrete process that can be used to construct the OBM, and study an
estimator on this discrete process. 
Section~\ref{numerics} is devoted to the implementation of the estimators of
the OBM, and contains numerical experiments showing the good
behavior of the estimators in practice.

\subsection*{Notations}

For notational convenience, we work on the time interval $[0,1]$. 
Our results can be extended to a general time interval via a space-time re-scaling (see Remark~\ref{scaling}).
Throughout the paper, we use the following notation for convergence of random variables:
$\convp$ in probability;\quad
$\convl$ in law;\quad
$\convsl$ stable in law;\quad
$\eqlaw$ denotes equality in law. 
The Lebesgue measure is written $\leb$.
The positive and negative parts of $x\in \R$ are denoted by $x^+=x\vee 0$, $x^-=(-x)\vee 0$.
For any continuous semimartingale $M$, we write $\langle M \rangle$ for its quadratic variation process.
For $y\in \R$ we define the (symmetric) \emph{local time} of $M$ at $y$ as the process $(L_t^y(M))_{t\in[0,1]}$, with
(See \cite[Corollary~VI.1.9, p.~227]{ry})
\begin{equation*}
L^y_t(M)=\lim_{\ve\downarrow 0} \frac{1}{2\ve}\int_0^t 1_{\{y-\ve\leq M_s \leq y+\ve\}} \vd\langle M \rangle_s
\text{ almost surely}.
\end{equation*}
When we do not specify $y$, we mean the local time at $0$: $L_t(M)=L_t^0(M)$.

For fixed $n\in\N$, we consider the discretization of $[0,1]$ given by $0,1/n,\dotsc,1$.
For any process $(M_t)_{t\in[0,1]}$, we write $M_{k,n}=M_{k/n}$.
For any processes $M,\bar{M}$ we also set the ``discrete bracket''
\[
[M,\bar{M}]^n_1=\sum_{k=1}^n (M_{k,n}-M_{k-1,n})(\bar{M}_{k,n}-\bar{M}_{k-1,n}).
\]
We also write $[M]^n_1=[M,M]^n_1$.

\section{Oscillating Brownian Motion}\label{sectionOBM}

For two parameters $\s_+,\s_->0$, we define the diffusion coefficient $\s$ as follows:
\begin{equation}\label{sigma}
\s(y)=\s_+ \mathbf{1}(y\geq 0)+\s_- \mathbf{1}(y< 0),\ \forall y\in\mathbb{R}.
\end{equation}

Let $W$ be a Brownian motion with its (completed) natural filtration $(\cG_t)_{t\geq 0}$
on a probability space $(\Omega,\cF,\mathbb{P})$. From now on, we denote by $Y$ the unique strong solution 
to 
\begin{equation}\label{obm}
Y_t=Y_0+\int_0^t \s(Y_s)\vd W_s.
\end{equation}
Strong existence and uniqueness of $Y$ is granted by the results of \cite{legall}. 
Following the terminology of \cite{kw}, we call $Y$ an \emph{Oscillating Brownian Motion} (OBM)

We recall some known properties of $Y$, proved in \cite{kw}. 

For $(t,x,y)\in(0,1]\times \R^2$, let $p(t,x,y)$ be the density of $Y_t$ in $y$, with initial condition $Y_0=x$. 
In \cite{kw} explicit formulas for the transition density are given. In particular, when $\sgn x\neq \sgn y$,
\begin{equation*}
p(t,x,y)=
\begin{cases}
\dfrac{2\s_+}{\s_-(\s_+ + \s_-)} \dfrac{1}{\sqrt{2\pi t}} e^{-(\frac{x}{\s_+}-\frac{y}{\s_-})^2\frac{1}{2t}}
    &\text{ for }x>0,\ y<0 \\[5mm]
\dfrac{2\s_- }{\s_+ (\s_+ + \s_-)} \dfrac{1}{\sqrt{2\pi t}} e^{-(\frac{y}{\s_+}-\frac{x}{\s_-})^2\frac{1}{2t}}	    &\text{ for } x<0,\ y>0.
\end{cases}
\end{equation*}
Integrating the previous equations we obtain
\begin{equation}\label{propone}
\begin{split}
\PR(Y_t<0|Y_0>0)
&=\frac{2\s_+}{\s_++\s_-}\Phi\left(-Y_0/(\s_+\sqrt{t})\right),\\
\PR\Prb*{Y_t>0\given Y_0<0}&=\frac{2\s_-}{\s_-+\s_+}\Phi\left(Y_0/(\s_-\sqrt{t})\right),
\end{split}
\end{equation}
where $\Phi$ is the cumulative density function of the standard Gaussian.
The law of the occupation time of $\R^+$ is also computed in \cite{kw}.
Let the occupation time of $\R^+$ be defined as
\begin{equation}\label{Q}
Q^+_t=\leb \{s\leq t:Y_s\geq 0\}.
\end{equation}
The distribution of $Q^+_t$, with $Y_0=0$ a.s., is explicit. 
The scaling $Q_1^+\eqlaw Q^+_t/t$ holds, and 
\begin{equation}\label{ma}
\PR(Q_1^+\in \vd u)=\frac{1}{\pi} \frac{1}{\sqrt{u(1-u)}} \frac{\s_+/\s_-}{1-(1-(\s_+/\s_-)^2)u}\vd u \text{ for } 0<u<1.
\end{equation}
This generalizes the arcsine law for the occupation time of the Brownian Motion.
The occupation time $Q^-$ on $\R^-$ is easily computed from $Q^+$ since obviously,
$Q^-_t+Q^+_t=t$ for any~$t\geq 0$.

We introduce the process $(X_t)_{t\geq 0}$, setting $X_t= Y_t/\s(Y_t)$ for $t\geq 0$. 
It follows from the Itô-Tanaka formula that $X$ is a Skew Brownian Motion (SBM, see \cite{le}), meaning that
$X$ satisfies the following SDE:
\begin{equation}\label{sbm}
X_t=X_0+B_t+\theta L_t(X),
\end{equation}
where $B$ is a Brownian Motion, $L_t (X)$ is the symmetric local time of $X$ at $0$, $X_0=Y_0/\s(Y_0)$ and the coefficient $\theta$ is given by
\begin{equation}\label{theta}
\theta=\frac{\s_- -\s_+}{\s_- +\s_+}.
\end{equation}
We write from now on BM for Brownian Motion, SBM for Skew Brownian Motion, OBM for Oscillating Brownian Motion. 
The local times of $X$ and $Y$ are related by 
\begin{equation}\label{elt}
L_t(X)=\frac{\s_++\s_-}{2\s_+\s_-} L_t(Y)
\end{equation}
(see \cite{le} for a special case from which we easily recover this formula).

\section{Main results}\label{results}

\subsection{The stable convergence}

Before stating our results, we need to recall the notion of \emph{stable convergence}, which was introduced by 
A.~Rényi \cite{renyi}. We refer to \cite{js} or \cite {jp} for a detailed exposition.
Let $Z_n$ a sequence of $E$-valued random variables defined on the same probability space $(\Omega,\FF,\PR)$. Let~$Z$ be an $E$-valued random variable defined on an extension, $(\tilde{\Omega},\tilde{\FF},\tilde{\PR})$.
We then say that $Z_n$ \emph{converges stably} to $Z$ (and write $Z_n\convsl[n\to\infty] Z$) if:
\[
\E[ Yf(Z_n)] \xrightarrow[n\to\infty]{} \tilde{\E} [Yf(Z)] 
\] 
for all bounded continuous functions $f$ on $E$ and all bounded random variables Y on $(\Omega,\mathcal{F})$ (or, equivalently, for all $Y$ as above and all functions $f$ which are bounded and Lipschitz). This notion of convergence is stronger than convergence in law, but weaker than convergence in probability.  We use in this paper the following crucial result: for random variables $Y_n$, $Z_n$ ($n\geq 1$), $Y$ and $Z$, 
\begin{equation*}\label{scl}
    \text{if }
Z_n \convsl[n\to\infty] Z
\text{ and }
Y_n \convp[n\to\infty] Y
\text{ then }
(Y_n,Z_n) \convsl[n\to\infty] (Y,Z).
\end{equation*}

\subsection{Estimators for the parameters the Oscillating Brownian motion}

Let us assume that we observe the process $Y$ solution to \eqref{obm}
at the discrete times $0,1/n,\dotsc,1$. 
We want to estimate $\s_+,\s_-$ from these observations. 

A natural estimator for the occupation time $Q^+_1$ defined in \eqref{Q} 
is given by the  Riemann sums (see Section~\ref{secot}):
\begin{equation}\label{eot}
\bar{Q}^n_1(Y,+)= \sum_{k=1}^n \frac{ \mathbf{1}(Y_k\geq 0)}{n}.
\end{equation}
We define now $\hat{\s}^n_+$ as
\begin{equation}\label{estimator}
\hat{\s}^n_+= \sqrt{\frac{[Y^+]^n_1}{ \bar{Q}^n_1(Y,+)}},
\end{equation}
which we show to be a consistent estimator for $\s_+$.
Similarly, we set
\[
\bar{Q}^n_1(Y,-)= \sum_{k=1}^n \frac{ \mathbf{1}(Y_k< 0)}{n}=1-\bar{Q}^n_1(Y,+)
\text{ and }\hat{\s}^n_-= \sqrt{\frac{[Y^-]^n_1}{ \bar{Q}^n_1(Y,-)}},
\]
Finally, we define our estimator of the vector $(\sigma_+,\sigma_-)'$ as 
\begin{equation*}
\hat{\s}^n= \begin{pmatrix} \hat{\s}^n_+ \\
    \hat{\s}^n_-
\end{pmatrix}.
\end{equation*}

\begin{theorem}\label{maintheorem}
Let $Y$ be solution of \eqref{obm} with $Y_0=0$ a.s., and $\hat{\s}^n$ defined in \eqref{estimator}. 
Then
\begin{enumerate}[leftmargin=1em,label={{\textup{(\roman*)}}}]
\item
The estimator is consistent:
\[
\hat{\s}^n \convp[n\to\infty] \begin{pmatrix}\s_+\\ \s_-\end{pmatrix}.
\]
\item There exists an extension $(\tilde{\Omega},\tilde{F},\tilde{\mathbb{P}})$ of
    $(\Omega,\cF,\mathbb{P})$ carrying a Brownian motion $\bar{B}$ independent from $W$ such that 
\begin{equation}
    \label{eq:conv:1}
\sqrt{n}
\begin{pmatrix}
(\hat{\s}^n_+)^2 - \s_+^2\\
(\hat{\s}^n_-)^2 - \s_-^2
\end{pmatrix}
\convsl[n\to\infty]
\begin{pmatrix}
\dfrac{\sqrt{2}\s_+^2 }{Q^+_1}  \int_0^1  \mathbf{1}(Y_s > 0)\vd\bar{B}_s
\\
\dfrac{\sqrt{2}\s_-^2}{1-Q^+_1} \int_0^1  \mathbf{1}(Y_s < 0)\vd\bar{B}_s 
\end{pmatrix}
  - 
  \begin{pmatrix}
\frac{1 }{Q^+_1}  
\\
\frac{1 }{1-Q^+_1}  
\end{pmatrix}
 \frac{2\sqrt{2}}{3\sqrt{\pi}} \left(\frac{\s_- \s_+}{\s_++\s_-}\right) L_1(Y).
\end{equation}
The stable limit in the above result depends on the path of $Y$ through its local time $L_1(Y)$ and its occupation time $Q^\pm_1$.
By identifying the distribution of the limit, we can rewrite \eqref{eq:conv:1} as 
a convergence in distribution involving elementary random variables as 
\begin{multline}
    \label{eq:limit:biased}
\sqrt{n}\begin{pmatrix}
(\hat{\s}^n_+)^2 - \s_+^2\\
(\hat{\s}^n_-)^2 - \s_-^2
\end{pmatrix}
\convl[n\to\infty]
\begin{pmatrix}
\frac{\sqrt{2}\s_+^2 }{\sqrt{\L}} \mathcal{N}_1
\\
\frac{\sqrt{2}\s_-^2 }{\sqrt{1-\L}} \mathcal{N}_2
\end{pmatrix}
-
 \frac{8\sqrt{2}}{3\sqrt{\pi}} \frac{(\s_- \s_+)^2}{\s_++\s_-}
 \frac{\xi}{\sqrt{(1-\L)\s_-^2+\L\s_+^2 }}
 \begin{pmatrix}
\sqrt{\frac{1-\L}{\L}}
\\
\sqrt{\frac{\L}{1-\L}}
\end{pmatrix}
\\
=
\begin{pmatrix}
\frac{\sqrt{2}\s_+^2 }{\sqrt{\L}} 
\left(\mathcal{N}_1
-
 \frac{8}{3\sqrt{\pi}} \frac{1}{r+1}
 \frac{\xi \sqrt{1-\L}
}{\sqrt{(1-\L)+\L r^2 }}\right)
\\
\frac{\sqrt{2}\s_-^2 }{\sqrt{1-\L}} 
\left(\mathcal{N}_2
-
 \frac{8}{3\sqrt{\pi}} \frac{1}{1/r+1}
 \frac{\xi \sqrt{\L}
}{\sqrt{\L+(1-\L)/r^2 }}\right)
\end{pmatrix},
\end{multline}
where $r=\s_+/\s_-$, $\xi,\mathcal{N}_1,\mathcal{N}_2,\L$ are mutually independent, $\xi\sim \exp(1)$, $\mathcal{N}_1,\mathcal{N}_2\sim N(0,1)$ and $\L$ follows the modified arcsine law \eqref{ma} with density 
\[
p_\L(\tau)=\frac{1}{\pi\tau^{1/2}(1-\tau)^{1/2}}\frac{r}{1-(1-r^2)\tau}.
\]
\end{enumerate}
\end{theorem}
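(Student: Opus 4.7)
The strategy is to reduce everything to the Skew Brownian Motion (SBM) $X=Y/\s(Y)$ via the Lamperti transform recalled in Section~\ref{sectionOBM}, then decompose the realized quadratic variation of $X^+$ through the It\^o--Tanaka formula and apply Jacod-type stable convergence theorems (suitably adapted to the SBM) to each piece. The identities $Y^\pm=\s_\pm X^\pm$ and $\bar Q^n_1(Y,\pm)=\bar Q^n_1(X,\pm)$ give $(\hat\s^n_\pm)^2=\s_\pm^2\,[X^\pm]^n_1/\bar Q^n_1(X,\pm)$, so both parts follow from analogous statements for $X$. For part~(i), It\^o--Tanaka yields $\langle X^+\rangle_1=Q^+_1$ (the symmetric local time of $X$ at $0$ is supported on $\{X=0\}$), hence $[X^+]^n_1\convp Q^+_1$ by the classical convergence of discrete quadratic variation of a continuous semimartingale; the Riemann sum $\bar Q^n_1(X,+)$ converges almost surely to $Q^+_1$ since $\{s:X_s=0\}$ has zero Lebesgue measure a.s. As \eqref{ma} puts no mass at $0$ or $1$, the ratio converges in probability to $\s_\pm^2$.

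For part~(ii), by Slutsky it suffices to find the stable limit of $\sqrt n\bigl([X^+]^n_1-\bar Q^n_1(X,+)\bigr)$. Using $X^+_t=\int_0^t\mathbf 1_{X_s>0}\,dB_s+\tfrac12 L_t(X)$, each squared increment splits as
\[
(X^+_{k,n}-X^+_{k-1,n})^2=a_k^2+a_k\ell_k+\tfrac14\ell_k^2,\quad a_k=\!\!\int_{(k-1)/n}^{k/n}\!\!\mathbf 1_{X_s>0}\,dB_s,\ \ell_k=L_{k/n}(X)-L_{(k-1)/n}(X).
\]
Jacod's CLT for realized quadratic variations of continuous It\^o semimartingales gives
\[
\sqrt n\bigl(\textstyle\sum_k a_k^2-Q^+_1\bigr)\convsl\sqrt 2\int_0^1\mathbf 1_{X_s>0}\,d\bar B_s
\]
on an extension carrying a Brownian motion $\bar B$ independent of~$\cF$, while the Riemann-sum defect $\sqrt n\bigl(\bar Q^n_1(X,+)-Q^+_1\bigr)$ is $o_\PR(1)$ by standard modulus-of-continuity estimates.

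The main obstacle is the treatment of the two local-time sums $\sqrt n\sum_k a_k\ell_k$ and $\sqrt n\sum_k\ell_k^2$, which jointly produce the $L_1$-bias. These are handled by an adaptation to the SBM of the stable convergence results of \cite{j1} for sums of the form $n^{-1/2}\sum_k f(X_{(k-1)/n},\sqrt n(X_{k/n}-X_{(k-1)/n}))$: only indices with $X_{(k-1)/n}=O(n^{-1/2})$ contribute, and a Brownian scaling in a neighbourhood of $0$ localises the combined contribution as an explicit integral against the SBM excursion measure. Evaluating this integral identifies the aggregate constant and yields $\sqrt n\sum_k\bigl(a_k\ell_k+\tfrac14\ell_k^2\bigr)\convp c\,L_1(X)$ with an explicit $c=c(\theta)$. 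Dividing by $\bar Q^n_1(X,+)\convp Q^+_1$, multiplying by $\s_+^2$, and converting $L_1(X)$ to $L_1(Y)$ via \eqref{elt} produces the bias $-\tfrac{2\sqrt 2}{3\sqrt\pi}\tfrac{\s_+\s_-}{\s_++\s_-}L_1(Y)/Q^+_1$ appearing in \eqref{eq:conv:1}; the symmetric analysis on $X^-$ yields the second coordinate, and the two stable limits are combined on the same extension.

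To pass from \eqref{eq:conv:1} to the closed form \eqref{eq:limit:biased}, I identify the joint law of $(Q^+_1,L_1(Y),\bar B)$. Conditionally on $Y$, the stochastic integrals $\int_0^1\mathbf 1_{Y_s>0}\,d\bar B_s$ and $\int_0^1\mathbf 1_{Y_s<0}\,d\bar B_s$ are independent centred Gaussians with variances $Q^+_1$ and $1-Q^+_1$, producing the factors $\sqrt\L\,\mathcal N_1$ and $\sqrt{1-\L}\,\mathcal N_2$. The marginal law of $\L=Q^+_1$ is the modified arcsine density \eqref{ma} recalled from \cite{kw}, and an excursion-theoretic argument (or a Williams-type decomposition of the path of $Y$ at its last zero before time $1$) identifies the conditional law of $L_1(Y)$ given $\L=\tau$ as an explicit deterministic factor depending on $(\tau,\s_+,\s_-)$ times an independent $\exp(1)$ variable $\xi$. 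Substituting these identifications in \eqref{eq:conv:1} and simplifying gives \eqref{eq:limit:biased}.
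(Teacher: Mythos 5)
Your proposal follows essentially the same route as the paper: the It\^o--Tanaka decomposition of $Y^\pm$ (equivalently $X^\pm$), the martingale CLT for the quadratic-variation part, Jacod-type theorems adapted to the SBM for the local-time terms producing the bias, and an identification of the joint law of $(L_1,Q^+_1)$ at the end. There is, however, one step that fails as stated and is a genuine gap: the claim that $\sqrt{n}\bigl(\bar{Q}^n_1(X,+)-Q^+_1\bigr)=o_\PR(1)$ ``by standard modulus-of-continuity estimates''. Writing $\bar{Q}^n_1(Y,+)-Q^+_1=\sum_{i=1}^n J_{i,n}$ as in \eqref{setJU}, each $J_{i,n}$ is bounded by $1/n$ and vanishes unless $Y$ changes sign on $[\frac{i-1}{n},\frac{i}{n}]$; the number of such intervals is of order $\sqrt{n}\,L_1(Y)$, so any argument based on absolute values (modulus of continuity, crossing counts) yields only $\sqrt{n}\,\bigl|\bar{Q}^n_1(Y,+)-Q^+_1\bigr|=O_\PR(1)$, i.e.\ tightness, not negligibility. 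The paper devotes Theorem~\ref{ot} to precisely this point: the error is split into compensator plus martingale, the compensator is shown via Lemma~\ref{convY} to converge to $\l_\s(f)L_1(Y)$ for an explicit $f$, and the crucial fact is the exact cancellation $\l_\s(f)=\frac{2}{\s_+^2}\leb(f^+)+\frac{2}{\s_-^2}\leb(f^-)=0$ between the errors made on the two sides of $0$ (Lemma~\ref{ctot}); the martingale part is handled by Theorem~\ref{jm}. The paper even remarks that this compensation is specific to the OBM and has no known analogue for general SDEs with discontinuous coefficients. Without this argument, your proof leaves an $O_\PR(1)$ term of local-time type unaccounted for in \eqref{eq:conv:1}, and the theorem does not follow.

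Two further points are asserted rather than proved, though they are consistent with what the paper actually does. First, the bias constant $c(\theta)$ is never computed: in the paper this is the content of Lemmas~\ref{qvb} and~\ref{qcs}, obtained from explicit moments $\phi_2,\phi_4$ of the Brownian local time, the function $\psi$, the identity $(|X|,L(X))\eqlaw(|\b|,L(\b))$, and the cancellation $\bar{\l}_\th(\th\phi_2/2+\psi/2)=0$; note that in your decomposition the cross term $\sqrt{n}\sum_k a_k\ell_k$ has a \emph{nonzero} limit (it equals $-\tfrac12$ times the limit of $\sqrt{n}\sum_k\ell_k^2$), so lumping the two sums together is legitimate but hides a second cancellation that must be verified. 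Second, for the law identification the paper does not use excursion theory or a Williams-type decomposition: it uses the closed-form joint density of $(L_1(X),Q^+_1)$ from \cite{abtww} and a change of variables showing that a suitable rescaling of $L_1(X)$ is independent of $Q^+_1$; your route is plausible but unexecuted. These two items are computational in nature; the occupation-time step above is the real gap.
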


\begin{remark}
The Brownian case is $\s=:\s_+=\s_-$, $r=1$. The limit law is
\[
\sqrt{2}\s^2 
    \begin{pmatrix}
\frac{1}{\sqrt{\L}} 
\left(\mathcal{N}_1
-
 \frac{4}{3\sqrt{\pi}}
 \xi \sqrt{1-\L}
\right)
\\
\frac{1 }{\sqrt{1-\L}} 
\left(\mathcal{N}_2
-
 \frac{4}{3\sqrt{\pi}}
 \xi \sqrt{\L}
\right)
\end{pmatrix}
\]
where $\L$ follows the classical arcsine law (see \cite{levy,ry}).
\end{remark}

\begin{remark}
In \eqref{aprlt} we prove
\[
\sqrt{n}[Y^+,Y^-]^n_1\convsl[n\to\infty] \frac{2\sqrt{2}}{3\sqrt{\pi}}\frac{\s_+\s_-}{\s_++\s_-}L_1(Y).
\]
Actually, each term 
of type $(Y^+_{t_i}-Y^+_{t_{i-1}})(Y^-_{t_i}-Y^-_{t_{i-1}})$ vanishes unless 
$\sgn(Y_{t_i})\not=\sgn(Y_{t_{i-1}})$. 
Thus, $\sqrt{n}[Y^+,Y^-]^n_1$ provides us with as an estimator of the local time
since it somehow counts the number of crossings of zero (\textit{cf.} \cite{j1,lmt1}).
\end{remark}

\begin{remark}
We assume $Y_0=0$ a.s. because we need $Y_0$ to visit both $\R^+$ and $\R^-$.
This happens a.s. in any right neighborhood of $0$ if the diffusion starts from
$0$. If the initial condition is not $0$, we shall wait for the first time at
which the process reaches $0$, say $\tau_0$, and consider the (random) interval
$[\tau_0,T]$. 
\end{remark}

We define now a different estimator for $\s_\pm$ by 
\begin{equation}\label{estimator4}
m^n_+:= \sqrt{\frac{[Y^+,Y]^n_1}{ \bar{Q}^n_1(Y,+)}}
,\quad\quad
m^n_-:= \sqrt{\frac{[Y^-,Y]^n_1}{ \bar{Q}^n_1(Y,-)}}
\quad
\text{and}
\quad 
m^n:= \begin{pmatrix}
    m^n_+\\
    m^n_-
\end{pmatrix}.
\end{equation}

\begin{theorem}\label{maintheorem2}
Let $Y$ be solution of \eqref{obm} with $Y_0=0$ a.s., and $m^n$ defined in \eqref{estimator4}. 
The following convergence holds for $n\rightarrow \infty$:
\begin{equation*}
 \sqrt{n}
 \begin{pmatrix}
(m^n_+)^2 - \s_+^2\\
(m^n_-)^2 - \s_-^2
\end{pmatrix}
\convsl[n\to\infty]
\begin{pmatrix}
\frac{\sqrt{2}\s_+^2 }{Q^+_1}  \int_0^1  \mathbf{1}(Y_s > 0)\vd\bar{B}_s
\\
\frac{\sqrt{2}\s_-^2}{1-Q^+_1} \int_0^1  \mathbf{1}(Y_s < 0)\vd\bar{B}_s 
\end{pmatrix},
\end{equation*}
where $\bar{B}$ is a BM independent of $Y$ on an extension $(\tilde{\Omega},\tilde{\cF},\tilde{\mathbb{P}})$
of $(\Omega,\cF,\mathbb{P})$. We can rewrite such convergence as follows:
\begin{equation}
    \label{eq:limit:unbiased}
 \sqrt{n}
 \begin{pmatrix}
(m^n_+)^2 - \s_+^2\\
(m^n_-)^2 - \s_-^2
\end{pmatrix}
\convl[n\to\infty]
\begin{pmatrix}
\frac{\sqrt{2}\s_+^2 }{\sqrt{\L}} \mathcal{N}_1
\\
\frac{\sqrt{2}\s_-^2 }{\sqrt{1-\L}} \mathcal{N}_2
\end{pmatrix},
\end{equation}
where $\mathcal{N}_1,\mathcal{N}_2,\L$ are mutually independent, 
$\mathcal{N}_1,\mathcal{N}_2\sim N(0,1)$ and $\L$ follows the modified arcsine law with density given by \eqref{ma}, 
with $r=\s^+/\s_-$.
\end{theorem}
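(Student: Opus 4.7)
The strategy will be to exploit a simple algebraic identity that exposes the local-time bias of Theorem \ref{maintheorem}, and then to cancel it against the cross-bracket convergence announced in the remark that follows. Since $Y = Y^+ - Y^-$, bilinearity of the discrete bracket gives
\[
[Y^+, Y]^n_1 = [Y^+]^n_1 - [Y^+, Y^-]^n_1,
\]
and a symmetric identity holds for $[Y^-, Y]^n_1$. Dividing by $\bar Q^n_1(Y, \pm)$ and rearranging, I arrive at
\[
(m^n_\pm)^2 - \sigma_\pm^2 = \bigl((\hat\sigma^n_\pm)^2 - \sigma_\pm^2\bigr) \mp \frac{[Y^+, Y^-]^n_1}{\bar Q^n_1(Y, \pm)},
\]
so $m^n_\pm$ differs from $\hat\sigma^n_\pm$ only through the cross-term $[Y^+,Y^-]^n_1$, which at the $\sqrt n$ scale is of the order of the local time at $0$.

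I will then multiply by $\sqrt n$, use $\bar Q^n_1(Y,\pm) \convp Q^\pm_1$ (a.s.\ strictly positive since $Y_0 = 0$ visits both half-lines in every right neighbourhood of $0$), and invoke the stability of stable convergence under multiplication by sequences converging in probability. Theorem \ref{maintheorem}(ii) controls the first term on the right, while the remark following Theorem \ref{maintheorem} (equation \eqref{aprlt}) supplies
\[
\sqrt n\,[Y^+, Y^-]^n_1 \convsl \pm \frac{2\sqrt 2}{3\sqrt \pi}\frac{\sigma_+\sigma_-}{\sigma_++\sigma_-}L_1(Y).
\]
Both convergences are \emph{stable} and, crucially, are produced by the same underlying limit machinery (the Jacod-type CLT applied to triangular arrays built from the Skew Brownian Motion $X = Y/\sigma(Y)$), so they hold jointly on a common extension of $(\Omega, \cF, \PR)$. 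A direct inspection then shows that the local-time bias appearing in \eqref{eq:conv:1} and the contribution $\mp \sqrt n\,[Y^+,Y^-]^n_1/\bar Q^n_1(Y,\pm)$ are equal in magnitude and opposite in sign, so they cancel exactly. What survives is precisely the Gaussian-mixture term
\[
\frac{\sqrt 2\, \sigma_\pm^2}{Q^\pm_1}\int_0^1 \mathbf{1}(\pm Y_s > 0)\,\mathrm{d}\bar B_s,
\]
which is the announced stable limit.

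Finally, to identify the limit law as \eqref{eq:limit:unbiased}, I observe that conditionally on $\sigma(Y)$ the two stochastic integrals $\int_0^1 \mathbf{1}(Y_s>0)\,\mathrm{d}\bar B_s$ and $\int_0^1 \mathbf{1}(Y_s<0)\,\mathrm{d}\bar B_s$ are centered Gaussian with respective variances $Q^+_1$ and $1-Q^+_1$, with zero covariance because the indicators have disjoint supports; they are therefore (conditionally) independent and equal in distribution to $\sqrt{Q^+_1}\,\mathcal{N}_1$ and $\sqrt{1-Q^+_1}\,\mathcal{N}_2$ for $\mathcal{N}_1,\mathcal{N}_2\sim N(0,1)$ independent of $Y$. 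Since $Q^+_1 \eqlaw \Lambda$ follows the modified arcsine law \eqref{ma}, the representation \eqref{eq:limit:unbiased} drops out. The main technical obstacle, and the step that actually carries the argument, is the \emph{joint} stable convergence needed for the bias cancellation: this is not a formal consequence of the two marginal limits of Theorem \ref{maintheorem}(ii) and \eqref{aprlt}, but has to be read off from their common proof through the Jacod-type central limit theorem for the SBM developed in Section \ref{proofs}.
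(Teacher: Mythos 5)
Your proposal is correct in substance but follows a genuinely different route from the paper's. The paper never passes through $\hat{\s}^n$: it decomposes $[Y^\pm,Y]^n_1$ directly via the It\^o--Tanaka identity \eqref{tanaka}, writing $[Y^+,Y]^n_1=[\xi,Y]^n_1+\tfrac12[L(Y),Y]^n_1$, applies the martingale CLT to the brackets of $\xi$ and $\eta$, and kills $[L(Y),Y]^n_1=[L(Y),Y^+]^n_1-[L(Y),Y^-]^n_1$ with Lemma~\ref{qcs}; in that decomposition the term $[L(Y)]^n_1$, which is the source of the bias in Theorem~\ref{maintheorem}, simply never appears. Your route instead reduces Theorem~\ref{maintheorem2} to Theorem~\ref{maintheorem}(ii) plus the cross-bracket limit \eqref{aprlt}, through the identity $[Y^+,Y]^n_1=[Y^+]^n_1-[Y^+,Y^-]^n_1$, and cancels the two local-time contributions. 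This is a legitimate and more economical organization, since it re-uses results already established; the paper's organization buys the advantage that no joint convergence of two limit theorems ever has to be justified.

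On that last point, you flag the joint stable convergence of $\sqrt{n}\bigl((\hat{\s}^n_\pm)^2-\s_\pm^2\bigr)$ and $\sqrt{n}[Y^+,Y^-]^n_1$ as the main technical obstacle and resolve it only by an appeal to the common Jacod-type machinery. This is vaguer than necessary: the obstacle evaporates once you observe that \eqref{aprlt} is in fact a convergence \emph{in probability}. Indeed, in the decomposition of Remark~\ref{remaprlt}, the stable limit of $\sqrt{n}[\xi,\eta]^n_1$ is identically zero (the indicators have disjoint supports), and convergence in law to a deterministic limit is convergence in probability; $\sqrt{n}[Y^\pm,L(Y)]^n_1\convp 0$ by Lemma~\ref{qcs}; and $\sqrt{n}[L(Y)]^n_1$ converges in probability by Lemma~\ref{qvb} and \eqref{convL}. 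Hence $\sqrt{n}[Y^+,Y^-]^n_1$ converges in probability, and the joint convergence you need is exactly the elementary property \eqref{scl} recalled in Section~\ref{results} --- the same device the paper uses. With this observation your argument becomes fully rigorous with no need to re-open the proofs.

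Finally, keep the signs honest, because your conclusion hinges on an \emph{exact} cancellation and you hedge with $\mp$ and $\pm$ symbols. Every nonzero summand of $[Y^+,Y^-]^n_1$ is nonpositive (a sign change of $Y$ forces increments of $Y^+$ and $Y^-$ of opposite signs), so the limit in \eqref{aprlt} must be read as $-\frac{2\sqrt{2}}{3\sqrt{\pi}}\frac{\s_+\s_-}{\s_++\s_-}L_1(Y)$; this is what the computation in Remark~\ref{remaprlt} actually yields, and it is with this sign that $-\sqrt{n}[Y^+,Y^-]^n_1/\bar{Q}^n_1(Y,+)$ cancels the negative bias in \eqref{eq:conv:1} rather than doubling it. Likewise, for the negative side the bilinear identity gives $[Y^-,Y]^n_1=[Y^-,Y^+]^n_1-[Y^-]^n_1$, so your ``symmetric identity'' only holds after replacing $Y$ by $-Y$ (equivalently, up to a global sign), consistently with how the paper itself handles $\eta$ in \eqref{tanaka}. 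Once this bookkeeping is fixed, the cancellation is exact, and your identification of the limit law (conditional independence of the two integrals given $Y$, with $Q^+_1\eqlaw\L$) coincides with the paper's.
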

\begin{remark}
Comparing Theorems \ref{maintheorem} and \ref{maintheorem2}, we see that an
asymptotic bias is present in~$\hat{\s}^n$, but not in $m^n$. This bias has the
same order ($\sim 1/\sqrt{n})$ as the ``natural fluctuations'' of the
estimator. Because the local time is positive, it is more likely that $\hat{\s}^n_\pm$ 
underestimates~$\sigma_\pm$. For a more quantitative comparison between the convergence of
the two estimators, see Remark \ref{remaprlt}. In Section \ref{numerics}, 
we compare the two estimators in practice.
\end{remark}
\begin{remark}
    \label{scaling}
Theorem \ref{maintheorem2} gives  the asymptotic behavior for an estimator of $(\s_+,\s_-)$ in the presence of
high frequency data, yet with fixed time horizon $T=1$. The OBM enjoys a \emph{scaling property}:
if $Y$ is an OBM issued from $0$, then $(\sqrt{c}Y_{t/c})_{t\in \R_+}$ is an OBM issued form $0$, for any $c>0$ constant 
(see \cite[Exercise IX.1.17, p.~374]{ry}). Using this fact, we can easily generalize Theorem \ref{maintheorem2}
to the case of data on a time interval $[0,T]$ for some fixed $T>0$. We set
\begin{equation}
    \label{eq:mT}
m^{n,T}_+:= \sqrt{\frac{[Y^+,Y]^n_T}{ \bar{Q}^n_T(Y,+)}}
,\quad\quad
m^{n,T}_-:= \sqrt{\frac{[Y^-,Y]^n_T}{ \bar{Q}^n_T(Y,-)}}.
\end{equation}
The estimator is consistent and we have the following convergence:
\[
 \sqrt{n}
 \begin{pmatrix}
(m^{n,T}_+)^2 - \s_+^2\\
(m^{n,T}_-)^2 - \s_-^2
\end{pmatrix}
\convsl[n\to\infty]
\begin{pmatrix}
\frac{\sqrt{2T}\s_+^2 }{Q^+_T}  \int_0^T  \mathbf{1}(Y_s > 0)\vd\bar{B}_s
\\
\frac{\sqrt{2T}\s_-^2}{T-Q^+_T} \int_0^T  \mathbf{1}(Y_s < 0)\vd\bar{B}_s 
\end{pmatrix},
\]
where $\bar{B}$ is a BM independent of $W$ on an extension of the underlying probability space.
The limiting random variable follows the law given in \eqref{eq:limit:unbiased}, which actually does not depend on~$T$.

A slightly different approach is to imagine that our data are not in high frequency, but that we observe the process at regular time intervals, for a long time. In this case it is more reasonable to consider an OBM
$(Y_t)_{t\in \R_+}$, construct an estimator depending  on
$(Y_i)_{i =0,1,\dots,T-1, T}, \,T\in \N$, and then take the limit in long time. We set
\begin{equation*}
\mu^T_+:= \sqrt{\frac{[Y^+,Y]^T_T}{ \bar{Q}^T_T(Y,+)}}
,\quad\quad
\mu^T_-:= \sqrt{\frac{[Y^-,Y]^T_T}{ \bar{Q}^T_T(Y,-)}}.
\end{equation*}
Using again Theorem \ref{maintheorem2} and the diffusive scaling of the OBM, we have the following convergence:
\[
 \sqrt{T}
 \begin{pmatrix}
(\mu^T_+)^2 - \s_+^2\\
(\mu^T_-)^2 - \s_-^2
\end{pmatrix}
\convl[T\to\infty]
\begin{pmatrix}
\frac{\sqrt{2}\s_+^2 }{\sqrt{\L}} \mathcal{N}_1
\\
\frac{\sqrt{2}\s_-^2 }{\sqrt{1-\L}} \mathcal{N}_2
\end{pmatrix}.
\]
The limit distribution is again the law given in \eqref{eq:limit:unbiased}.
Theorem \eqref{maintheorem} can also be generalized to high frequency data on
an interval $[0,T]$ and to equally spaced data in long time, using the
diffusive scaling and \eqref{j1}-\eqref{j2}. For example, analogously to \eqref{eq:mT}, we define
 \[
\hat{\s}^{n,T}_+:= \sqrt{\frac{[Y^+,Y^+]^n_T}{ \bar{Q}^n_T(Y,+)}}
,\quad\quad
\hat{\s}^{n,T}_-:= \sqrt{\frac{[Y^-,Y^-]^n_T}{ \bar{Q}^n_T(Y,-)}}.
\]
Again, the limit law does not change and is the one given in \eqref{eq:limit:biased}.
\end{remark}

\subsection{A generalization to OBM with drift}

We consider now a wider class of processes, adding a drift term to equation \eqref{obm}. Formally, 
let now $\xi$ be the strong solution to 
\begin{equation}\label{eqdrift}
d\xi_t=b(\xi_t)\vd t+\s(\xi_t)\vd W_t,
\end{equation}
with $\xi_0=0$, $\s$ defined in \eqref{sigma} and $b$ measurable and bounded. 
Again, strong existence and uniqueness of the solution to \eqref{eqdrift} is ensured by the results
of \cite{legall}. 

 Let $m^n(\xi)$ be defined as in \eqref{estimator4}:
\[
m^n_+(\xi)= \sqrt{\frac{[\xi^+,\xi]^n_1}{ \bar{Q}^n_1(\xi,+)}}
,\quad 
m^n_-(\xi)= \sqrt{\frac{[\xi^-,\xi]^n_1}{ \bar{Q}^n_1(\xi,-)}}
\quad
\text{and}
\quad
m^n(\xi):= \begin{pmatrix}
    m^n_+(\xi)\\
    m^n_-(\xi)
\end{pmatrix}.
\]
Let us also denote $Q^+_1(\xi)=\int_0^1 \mathbf{1}(\xi_s > 0) \vd s $.

\begin{corollary}\label{girsanov}
The following convergence holds for $n\rightarrow \infty$:
\[
 \sqrt{n}
 \begin{pmatrix}
(m^n_+(\xi))^2 - \s_+^2\\
(m^n_-(\xi))^2 - \s_-^2
\end{pmatrix}
\convsl[n\to\infty]
\begin{pmatrix}
\frac{\sqrt{2}\s_+^2 }{Q^+_1(\xi)}  \int_0^1  \mathbf{1}(\xi_s > 0)\vd\bar{B}_s
\\
\frac{\sqrt{2}\s_-^2}{1-Q^+_1(\xi)} \int_0^1  \mathbf{1}(\xi_s < 0)\vd\bar{B}_s 
\end{pmatrix},
\]
where $\bar{B}$ is a BM independent of $W$ on an extension of the underlying probability space.
 We can rewrite such convergence as follows:
\[
 \sqrt{n}
 \begin{pmatrix}
(m^n_+(\xi))^2 - \s_+^2\\
(m^n_-(\xi))^2 - \s_-^2
\end{pmatrix}
\convl[n\to\infty]
\begin{pmatrix}
\frac{\sqrt{2}\s_+^2 }{\sqrt{\Theta}} \mathcal{N}_1
\\
\frac{\sqrt{2}\s_-^2 }{\sqrt{1-\Theta}} \mathcal{N}_2
\end{pmatrix},
\]
where $\mathcal{N}_1,\mathcal{N}_2,\Theta$ are mutually independent, $\mathcal{N}_1,\mathcal{N}_2\sim N(0,1)$ and $\Theta\eqlaw Q^+_1(\xi)$.
\end{corollary}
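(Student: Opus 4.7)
\textbf{Proof plan for Corollary \ref{girsanov}.} The strategy is to remove the drift by a Girsanov transformation, which reduces the question to the OBM case handled in Theorem~\ref{maintheorem2}, and then to transfer the stable convergence back to the original measure.

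\emph{Step 1 (Girsanov).} Since $\s \geq \min(\s_+,\s_-) > 0$ and $b$ is bounded, the process $\theta_t := b(\xi_t)/\s(\xi_t)$ is bounded on $[0,1]$. Novikov's criterion is trivially satisfied, so
\[
L_t := \exp\!\left(-\int_0^t \theta_s \vd W_s - \frac{1}{2}\int_0^t \theta_s^2 \vd s\right)
\]
is a true $\PR$-martingale on $[0,1]$. Define $\vd\tilde{\PR} := L_1 \vd\PR$. Then $\tilde{\PR} \sim \PR$ and, by Girsanov, $\tilde{W}_t := W_t + \int_0^t \theta_s \vd s$ is a $\tilde{\PR}$-Brownian motion. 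Substituting into \eqref{eqdrift}, we find $\vd\xi_t = \s(\xi_t)\vd\tilde{W}_t$ under $\tilde{\PR}$. Thus $\xi$ is an OBM under $\tilde{\PR}$, starting from $0$.

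\emph{Step 2 (Apply Theorem \ref{maintheorem2}).} Theorem~\ref{maintheorem2} applied to $\xi$ under $\tilde{\PR}$ yields an extension carrying a $\tilde{\PR}$-Brownian motion $\bar{B}$ independent of $\tilde{W}$ (hence of $\xi$), such that
\[
\sqrt{n}\begin{pmatrix}(m^n_+(\xi))^2 - \s_+^2 \\ (m^n_-(\xi))^2 - \s_-^2\end{pmatrix}
\convsl[n\to\infty]
\begin{pmatrix}\frac{\sqrt{2}\s_+^2}{Q^+_1(\xi)}\int_0^1 \mathbf{1}(\xi_s>0)\vd\bar{B}_s \\ \frac{\sqrt{2}\s_-^2}{1-Q^+_1(\xi)}\int_0^1\mathbf{1}(\xi_s<0)\vd\bar{B}_s\end{pmatrix}
\]
under $\tilde{\PR}$.

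\emph{Step 3 (Transfer to $\PR$).} Stable convergence is preserved under equivalent changes of measure: given any bounded $Y$ and bounded continuous $f$, $\E_\PR[Y f(Z_n)] = \tilde{\E}[(\vd\PR/\vd\tilde{\PR})\, Y f(Z_n)]$, and, since $\vd\PR/\vd\tilde{\PR} = 1/L_1 \in L^1(\tilde{\PR})$, a standard truncation argument applied to the product $(\vd\PR/\vd\tilde{\PR})Y$ yields the stable convergence under $\PR$ with the same limit (on the same extension, with $\PR$ extended so that $\bar{B}$ is independent of $\cF$). Moreover $1/L_1$ is $\sigma(W)$-measurable, hence independent of $\bar{B}$ under $\tilde{\PR}$, so $\bar{B}$ remains a standard Brownian motion independent of $(W,\xi)$ under $\PR$. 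This gives the first (stable) convergence in the corollary, noting that under $\PR$ the occupation time $Q^+_1(\xi)$ now has the law induced by the drifted SDE rather than the arcsine-type law \eqref{ma}.

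\emph{Step 4 (Rewrite as convergence in law).} Conditioning on $\xi$, the stochastic integrals $I_\pm := \int_0^1 \mathbf{1}(\xi_s\gtrless 0)\vd\bar{B}_s$ are centered Gaussian with variances $Q^\pm_1(\xi)$, and their bracket vanishes because $\mathbf{1}(\xi_s>0)\mathbf{1}(\xi_s<0)\equiv 0$; hence $I_+,I_-$ are conditionally independent given $\xi$. Writing $\Theta := Q^+_1(\xi)$ and $I_\pm = \sqrt{Q^\pm_1(\xi)}\, \mathcal{N}_{1,2}$ with $\mathcal{N}_1,\mathcal{N}_2\sim N(0,1)$ independent of $\xi$ (hence of $\Theta$) and of each other, we obtain the stated convergence in law.

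\emph{Main obstacle.} The only nontrivial point is the transfer of stable convergence across the absolutely continuous change of measure together with a careful bookkeeping of the extension so that $\bar{B}$ remains a Brownian motion independent of $W$ (equivalently, of $\xi$) under $\PR$. Once that is verified via the argument above, everything else follows by applying Theorem~\ref{maintheorem2} verbatim under $\tilde{\PR}$.
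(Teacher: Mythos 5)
Your proof is correct and follows essentially the same route as the paper: Girsanov's theorem to pass between the drifted process and the OBM, Theorem~\ref{maintheorem2} applied under the measure that makes $\xi$ an OBM, and preservation of stable convergence under an equivalent change of measure (which the paper isolates as Lemma~\ref{changeprobstable}, proved by the same truncation/uniform-integrability argument you sketch, together with the observation that the product-extension structure keeps $\bar{B}$ a Brownian motion independent of $\cF$). The only difference is cosmetic: the paper starts from the OBM under $\PR$ and adds the drift via the measure change $\QR$, whereas you start from the drifted process and remove the drift, then transfer the stable convergence back — the two directions are mirror images of one another.
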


\begin{remark}
Unlike for the OBM, the limit law is not explicit in Corollary~\ref{girsanov}, 
since the law of the occupation time of the positive axes is not
know in general (See \textit{e.g.}, \cite{lamperti,watanabe,kasahara,watanabe1}). 
On the other hand, some information on the law
of $\Theta$ can be obtained, at least in some special cases, via Laplace
transform. 

We also stress that this dependence on the occupation time is due the actual
sample size of the data giving us useful information. Indeed, when estimating
$\s_+$, the number of intervals that we can use is proportional to the
occupation time of $\R^+$. Analogously for the negative part.
\end{remark}

\begin{remark}
Actually, Corollary \ref{girsanov} holds under weaker assumptions. 
An example of model fitting into this framework is the SET-Vasicek model \cite{interestrate},
a generalization of the Vasicek interest rate model to a non-constant volatility, given exactly by \eqref{sigma}:
\[
    \vd \xi_t= -\a(\xi_t-\b)\vd t+\s(\xi_t)\vd W_t.
\]
\end{remark}

\begin{remark}\label{rem:scaling-drift}
The scaling property described in Remark~\ref{scaling} no longer holds in this situation,
so that the estimator can only be used in the ``high frequency'' setting. 
\end{remark}


\section{Proofs of the convergence theorem}
\label{proofs}

This section is devoted to the proof Theorems \ref{maintheorem} and \ref{maintheorem2}. 
We first deal with some general approximation results which are well known for
diffusions with regular coefficients (see \cite{j1,jp,js}), but not for 
the framework considered here with discontinuous coefficients (when $\theta\not=0$, 
the law of the SBM is singular with respect to the one of the BM \cite{legall}).

Following \cite{lejaymartinez,etore}, 
we use the connection between the OBM and the SBM through a Lamperti-type transform.
Hence, we apply the results of \cite{lmt1} to the convergence of estimators of quadratic variation,
covariation and occupation time for these processes. Finally, we use all these
results to prove the main Theorems \ref{maintheorem} and \ref{maintheorem2}.


\subsection{Approximation results}

Let us write
\[
\leb(\phi) = \int_{-\infty}^\infty \phi(\a) \vd\a
\]
for the Lebesgue integral of a function. 
In \cite{lmt1},  the following approximation result, borrowed from \cite{j1}, 
is proved for the SBM solution of \eqref{sbm}.

\begin{lemma}
\label{convskew}
Let $f$ be a bounded function such that $\int |x|^k |f(x)|\vd x<\infty$ for $k=0,1,2$ and~$X$ be a SBM
of parameter $\theta\in[-1,1]$ (\textit{i.e.}, the solution to \eqref{sbm}). 
Then for any $a>0$,
\[
\PR \Prb*{\left|\frac{1}{\sqrt{n}} \sum_{i=1}^{n-1} f(X_{i,n}\sqrt{n})- \bar{\l}_\th(f) L_1(X)\right|>a }
    \rightarrow 0,
\]
where $\Set{L_t(X)}_{t\geq 0}$ is the symmetric local time at $0$ of the SBM and
\begin{equation}\label{la1}
\bar{\l}_\th(f)=(1+\th)\leb(f^+)+(1-\th)\leb(f^-).
\end{equation}
\end{lemma}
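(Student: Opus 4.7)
The plan is to adapt Jacod's argument for the Brownian case to the SBM setting. The strategy has three layers: recognise the sum as a Riemann-type approximation to a scaled integral, apply the occupation times formula to convert that integral into one involving local times at many levels, and exploit the asymmetric behaviour of the SBM's family of local times at zero to compute the limit.

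Concretely, I would first write
\[
\frac{1}{\sqrt{n}} \sum_{i=1}^{n-1} f(\sqrt{n}\, X_{i,n}) = \sqrt{n}\int_0^1 f(\sqrt{n}\, X_s)\vd s + R_n,
\]
and then use the occupation times formula for the continuous semimartingale $X$ (which satisfies $\langle X\rangle_t = t$), together with the change of variables $a = y/\sqrt{n}$, to get
\[
\sqrt{n}\int_0^1 f(\sqrt{n}\, X_s)\vd s = \int_{-\infty}^{\infty} f(y)\, L_1^{y/\sqrt{n}}(X)\vd y.
\]
The distinctive feature of SBM enters here: the map $a\mapsto L_1^a(X)$ is continuous on $\R\setminus\{0\}$ but jumps at $a=0$, with one-sided limits $L_1^{0+}(X)=(1+\th)L_1(X)$ and $L_1^{0-}(X)=(1-\th)L_1(X)$, which follows from the SDE \eqref{sbm} through the Itô--Tanaka formula with the symmetric normalisation chosen in the paper. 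For each $y\neq 0$ one therefore has $L_1^{y/\sqrt{n}}(X)\to (1+\th\,\sgn(y))L_1(X)$ almost surely, and the random bound $\sup_{a}L_1^a(X)<\infty$ together with $f\in L^1(\R)$ allow dominated convergence to yield
\[
\int_{-\infty}^{\infty} f(y)\, L_1^{y/\sqrt{n}}(X)\vd y \convp[n\to\infty] \bigl((1+\th)\leb(f^+)+(1-\th)\leb(f^-)\bigr)\, L_1(X)=\bar{\l}_\th(f)\, L_1(X),
\]
where $f^{\pm}$ denote the restrictions of $f$ to $\R^{\pm}$.

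The hard part will be showing $R_n\convp[n\to\infty] 0$. Naively one would like to Taylor-expand $f(\sqrt{n}\, X_s)$ between consecutive grid points, but the scaling is adversarial: $x\mapsto f(\sqrt{n}\,x)$ is essentially supported in a window of width $1/\sqrt{n}$ around zero, the same order as the typical fluctuation of $X$ over an interval of length $1/n$, so a direct Hölder bound cannot control an error of order $\sqrt{n}$. The right approach, implemented in \cite{j1} for the Brownian case and adapted to the SBM in \cite{lmt1}, is to use the Markov property on the grid $\{i/n\}$ together with the explicit transition density of the SBM and to decompose the error according to the level of $X_{i,n}$; the moment hypothesis $\int |x|^k|f(x)|\vd x<\infty$ for $k=0,1,2$ is exactly what makes the resulting bounds summable at the correct rate. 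This delicate estimation is the main obstacle, and is precisely the statement that the present paper imports from \cite{lmt1} rather than reprove.
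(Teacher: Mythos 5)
The first thing to note is that the paper does not prove this lemma at all: it is stated as imported from \cite{lmt1}, where it is obtained by adapting Theorem~4.1 of \cite{j1} to the SBM. So your outline must be judged as a candidate replacement for a pure citation, not against an internal argument.

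The part you actually prove is correct, and it is a genuinely nice way to see where $\bar{\lambda}_\theta$ comes from. Since the finite-variation part of $X$ is $\theta L(X)$, indeed $\langle X\rangle_t=t$ and the occupation-times formula gives $\sqrt{n}\int_0^1 f(\sqrt{n}X_s)\vd s=\int f(y)L_1^{y/\sqrt{n}}(X)\vd y$. Your local-time facts also check out: by Theorem~VI.1.7 of \cite{ry}, the jumps of $a\mapsto L_1^a(X)$ equal $2\int_0^1 \mathbf{1}(X_s=a)\vd\big(\theta L_s(X)\big)$, hence vanish simultaneously for all $a\neq 0$ and equal $2\theta L_1(X)$ at $a=0$; combined with the symmetric normalisation $L_1(X)=\tfrac12\big(L_1^{0+}(X)+L_1^{0-}(X)\big)$ this gives $L_1^{0\pm}(X)=(1\pm\theta)L_1(X)$, and dominated convergence (using $f\in L^1$ and $\sup_a L_1^a(X)<\infty$ a.s.) yields exactly $\bar{\lambda}_\theta(f)L_1(X)$, also in the extreme cases $|\theta|=1$.

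The gap is in the last step, and it is a logical one rather than a computational one. What \cite{lmt1} and \cite{j1} prove is the full statement of the lemma --- the convergence of the discrete sum --- not the bound $R_n\convp 0$ in your decomposition. Citing them for $R_n\convp 0$ is therefore equivalent to citing them for the lemma itself, at which point your (correct) identification of the limit becomes redundant rather than a reduction of the work. Conversely, if $R_n\convp 0$ is to be proved rather than cited, it is the entire technical content: as you yourself diagnose, pathwise bounds fail because $f(\sqrt{n}\,\cdot)$ is concentrated on a window of width $n^{-1/2}$, the same order as the increments of $X$ over a time step $1/n$, so the known proofs must exploit cancellation via conditional expectations, the scaling $p_{1/n}(x,y)=\sqrt{n}\,p_1(\sqrt{n}x,\sqrt{n}y)$ of the SBM transition density, and martingale arguments for triangular arrays --- none of which appears in the proposal. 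Since the paper itself imports the lemma wholesale, you are on equal footing with the paper; but the proposal should be presented as an illuminating derivation of the limiting constant plus a citation of the full result, not as a self-contained proof.
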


\begin{remark}\label{j41}
In particular, when $\th=0$, $X$ is BM and the coefficient in front of the local time is simply $\leb(f)$. 
We recover there a special case of
a theorem by J.~Jacod (see \cite{j1}, Theorem 4.1). 
\end{remark}

We prove now an approximation result for the OBM.
\begin{lemma}
\label{convY}
Let $Y$ be the OBM in \eqref{obm}. 
Let $f$ be a bounded function such that $\int |x|^k |f(x)|\vd x<\infty$ for $k=0,1,2$.
Then for any $a>0$,
\[
\PR \Prb*{\left|\frac{1}{\sqrt{n}} \sum_{i=1}^{n-1} f(Y_{i,n}\sqrt{n})- 
\l_\s(f)
 L_1(Y)\right|>a }\rightarrow 0,
\]
where $L_t(Y)$ is the local time of $Y$ and
\begin{equation*}\label{la}
\l_\s(f)=2\left(\frac{\leb(f^+)}{\s_+^2} +
\frac{\leb(f^-)}{\s_-^2}\right).
\end{equation*}
\end{lemma}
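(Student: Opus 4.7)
The plan is to reduce the OBM statement to the SBM statement of Lemma \ref{convskew} via the Lamperti-type transform $X_t = Y_t/\s(Y_t)$ from Section \ref{sectionOBM}. Since $\s > 0$, one has $\sgn(X_{i,n}) = \sgn(Y_{i,n})$, with $Y_{i,n} = \s_+ X_{i,n}$ on $\{Y_{i,n} \geq 0\}$ and $Y_{i,n} = \s_- X_{i,n}$ on $\{Y_{i,n} < 0\}$. I would then define
\[
g(x) = f(\s_+ x)\mathbf{1}(x \geq 0) + f(\s_- x)\mathbf{1}(x < 0),
\]
so that the pointwise identity $f(Y_{i,n}\sqrt{n}) = g(X_{i,n}\sqrt{n})$ recasts the OBM-indexed sum as an SBM-indexed sum.

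Next I would verify that $g$ meets the hypotheses of Lemma \ref{convskew}: boundedness is inherited from $f$, while the integrability $\int |x|^k |g(x)|\vd x<\infty$ for $k = 0,1,2$ follows from that of $f$ via the substitutions $y = \s_\pm x$, yielding the bound $\int |x|^k |g(x)|\vd x \leq (\s_+^{-k-1} + \s_-^{-k-1}) \int |y|^k |f(y)|\vd y$.

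Applying Lemma \ref{convskew} to $g$ then delivers
\[
\frac{1}{\sqrt{n}} \sum_{i=1}^{n-1} f(Y_{i,n}\sqrt{n}) = \frac{1}{\sqrt{n}} \sum_{i=1}^{n-1} g(X_{i,n}\sqrt{n}) \convp[n\to\infty] \bar{\l}_\th(g) L_1(X).
\]
To identify the limit, I would compute $\leb(g^\pm) = \leb(f^\pm)/\s_\pm$ by change of variables, substitute $1\pm\th = 2\s_\mp/(\s_+ + \s_-)$ from \eqref{theta}, and finally convert $L_1(X)$ into $L_1(Y)$ using \eqref{elt}; the expression should collapse algebraically to $\l_\s(f) L_1(Y)$.

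The only real obstacle is the careful bookkeeping of the scaling factors between $\s_\pm$, $\th$, and the local-time normalizations of $X$ and $Y$ in this final identification. No deeper probabilistic input is needed beyond the already established Lemma \ref{convskew} and the relations \eqref{theta} and \eqref{elt}.
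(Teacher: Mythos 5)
Your proposal is essentially the paper's own proof: the paper introduces $\tilde f(x)=f(\s(x)x)$, which is exactly your $g$, uses the pointwise identity $f(Y_{i,n}\sqrt n)=\tilde f(X_{i,n}\sqrt n)$, applies Lemma \ref{convskew}, and converts the constants via \eqref{theta} and \eqref{elt}; your explicit verification of the integrability hypothesis for $g$ is a detail the paper leaves implicit.

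One warning about the step you deferred as ``bookkeeping'': it does not land on the constant printed in the statement. Using your own formulas $\leb(g^\pm)=\leb(f^\pm)/\s_\pm$ (with $f^\pm$ read as the restrictions $f\mathbf{1}_{\R^\pm}$, which is the convention forced by the paper's uses of \eqref{la1}), $1\pm\th=2\s_\mp/(\s_++\s_-)$ and \eqref{elt}, one gets
\[
\bar{\l}_\th(g)\,L_1(X)
=\left(\frac{2\s_-\leb(f^+)}{(\s_++\s_-)\s_+}+\frac{2\s_+\leb(f^-)}{(\s_++\s_-)\s_-}\right)\frac{\s_++\s_-}{2\s_+\s_-}\,L_1(Y)
=\left(\frac{\leb(f^+)}{\s_+^2}+\frac{\leb(f^-)}{\s_-^2}\right)L_1(Y),
\]
that is $\tfrac12\l_\s(f)\,L_1(Y)$ rather than $\l_\s(f)\,L_1(Y)$. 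This factor of two is not a defect of your method: the paper's own proof asserts $\bar{\l}_\th(\tilde f)=\frac{2\s_+\s_-}{\s_-+\s_+}\l_\s(f)$, an equality which fails by the same factor, so the statement and the proof of Lemma \ref{convY} are mutually inconsistent. A sanity check shows the statement's constant is the one at fault: for $\s_+=\s_-=\s$ the process $Y$ is $\s$ times a Brownian motion, and Remark \ref{j41} (Jacod's theorem) yields the limit $\s^{-2}\leb(f)\,L_1(Y)$, whereas $\l_\s(f)L_1(Y)=2\s^{-2}\leb(f)\,L_1(Y)$. So your reduction is the right one; carried out honestly it proves the lemma with $\l_\s(f)$ replaced by $\frac{\leb(f^+)}{\s_+^2}+\frac{\leb(f^-)}{\s_-^2}$, a correction that is harmless for the rest of the paper, since the lemma is only invoked either when $\l_\s(f)=0$ or when the normalized sum is multiplied by an extra factor $n^{-1/2}$.
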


\begin{proof}
Recall $Y_t/\s(Y_t)=X_t$, \eqref{elt} and \eqref{theta}. Let $\tilde{f}(x)=f(\s(x)x) $.
We have
\begin{multline*}
\bar{\l}_\th(\tilde{f})=(1+\th)\leb(\tilde{f}^+)+(1-\th)\leb(\tilde{f}^-)
\\
=\frac{2\s_-}{\s_+(\s_-+\s_+)}\leb(f^+)+
\frac{2\s_+}{\s_-(\s_-+\s_+)}\leb(f^-)
= \frac{2\s_+\s_-}{(\s_-+\s_+)} \l_\s(f),
\end{multline*}
so $\l_\s(f) L_t(Y)=\bar{\l}_\th(\tilde{f}) L_t(X)$. Therefore, from Lemma \ref{convskew},
for any $a>0$, 
\begin{multline*}
\PR \Prb*{\left|\frac{1}{\sqrt{n}} \sum_{i=1}^{n-1} f(Y_{i,n}\sqrt{n})- 
\l_\s(f)
 L_t(Y)\right|>a }\\
 =
\PR \Prb*{\left|\frac{1}{\sqrt{n}} \sum_{i=1}^{n-1}\tilde{f}( X_{i,n}\sqrt{n})- 
\bar{\l}_\th(\tilde{f}) L_1(X)\right|>a } \xrightarrow[n\to\infty]{} 0.
 \end{multline*}
This concludes the proof.
\end{proof}

We state now a very special case of Theorem~3.2 in  \cite{j2}, that we apply
several times in this work. The version in \cite{j2} holds for semimartingales,
not only martingales, the processes involved can be multi-dimensional, and the
limit process is not necessarily $0$. Anyways, we do not need this general
framework here. Stating the theorem only for one-dimensional martingales converging
to $0$ allows us to keep a simpler notation, which we introduce now:
for each c\`adl\`ag process $J$ we write $\Delta_i^n J=J_{i/n}-J_{(i-1)/n}$.
Consider a filtered probability space $(\Omega, \mathcal{F},\mathscr{F}, \PR)$
carrying a Brownian motion $B$. The filtration $\mathscr{F}=(\mathcal{F}_t)_{t\in [0,1]}$
is the natural (completed) one  for the Brownian motion. 
We define the filtration $\mathscr{F}^n$ as the ``discretization'' defined by $\mathcal{F}^n_t=\mathcal{F}_{[nt]/n}$.
We consider a $\mathscr{F}^n$-martingale in $\R$, \textit{i.e.}, a process of the form 
\[
Z^n_1=\sum_{i=1}^n \chi_i^n,
\]
where each $\chi_i^n$ is $\mathcal{F}_{i/n}$ measurable, square-integrable,
and $\E\Prb{ \chi_i^n \given \mathcal{F}_{\frac{i-1}{n}} }=0$. 

\begin{theorem}[{{Simplified form of Theorem~3.2 in \cite{j2}}}]
    \label{jm}
Suppose that
\begin{gather}
\label{qv}\sum_{i=1}^n \E\Prb*{ |\chi_i^n|^2 \given \mathcal{F}_{\frac{i-1}{n}} }\convp[n\to\infty] 0,\\
\label{ort}
\text{and }
\sum_{i=1}^n \E\Prb*{ \chi_i^n \Delta_i^n B  \given \mathcal{F}_{\frac{i-1}{n}} }\convp[n\to\infty] 0.
\end{gather}
Then $Z^n_1$ converges to $0$ in probability as $n\to\infty$.
\end{theorem}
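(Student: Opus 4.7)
The plan is to exploit the discrete martingale structure and conclude via a Lenglart-type domination inequality. Since each $\chi_i^n$ is $\mathcal{F}_{i/n}$-measurable, square-integrable, and centered under $\mathcal{F}_{(i-1)/n}$, the partial sums $Z^n_k := \sum_{i=1}^k \chi_i^n$ form a square-integrable $(\mathcal{F}_{k/n})_{k=0,\dots,n}$-martingale starting at $0$, and its predictable quadratic variation is exactly the process appearing in hypothesis \eqref{qv}. Thus \eqref{qv} says that $\langle Z^n\rangle_n \to 0$ in probability.

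Next, I would apply the Lenglart--Rebolledo inequality to the nonnegative submartingale $(Z^n_k)^2$: for every $\varepsilon,\eta>0$,
\[
\PR\Bigl(\max_{0\le k\le n}|Z^n_k|\ge \varepsilon\Bigr) \;\le\; \frac{\eta}{\varepsilon^2}+\PR\bigl(\langle Z^n\rangle_n\ge \eta\bigr).
\]
Condition \eqref{qv} sends the second term to $0$ as $n\to\infty$, and since $\eta$ is arbitrary, $\limsup_n \PR(|Z^n_1|\ge\varepsilon)=0$, which is the claim. Technically nothing difficult remains; the proof is really a single application of Lenglart to a square-integrable martingale whose predictable bracket vanishes.

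The only point worth commenting on is the role of \eqref{ort}. For this simplified statement, where the target limit is the zero random variable, \eqref{ort} is redundant: the predictable quadratic variation alone controls the $L^2$-norm of $Z^n_1$, and Lenglart's inequality yields convergence in probability without any reference to a cross-variation with $B$. I would retain \eqref{ort} in the statement only because in Jacod's full Theorem~3.2 it serves to identify the Brownian component of a nontrivial stable limit via $\langle Z^n,B\rangle$; since we are proving convergence to $0$, we obtain this control automatically from \eqref{qv} by Cauchy--Schwarz. Hence the main (and essentially only) obstacle is simply verifying the Lenglart bound is applicable in the discrete-time setting — which is standard, as the bracket there is $\sum_{i\le n}\E[|\chi_i^n|^2\mid\mathcal{F}_{(i-1)/n}]$ by direct computation.
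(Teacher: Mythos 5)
Your argument is correct, but it takes a genuinely different route from the paper: the paper gives no proof of Theorem~\ref{jm} at all, presenting it as a specialization of Theorem~3.2 of \cite{j2} and only remarking why the remaining hypotheses of that theorem (uniform integrability, and orthogonality against bounded martingales orthogonal to $B$) are automatic in this Brownian setting. You instead prove the statement from scratch: $(Z^n_k)_{k\le n}$ is a square-integrable discrete martingale whose predictable bracket is exactly the sum in \eqref{qv}, and the discrete Lenglart-type bound
\[
\PR\Bigl(\max_{0\le k\le n}|Z^n_k|\ge \varepsilon\Bigr) \;\le\; \frac{\eta}{\varepsilon^2}+\PR\bigl(\langle Z^n\rangle_n\ge \eta\bigr)
\]
converts \eqref{qv} into the conclusion. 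Two remarks. First, your observation that \eqref{ort} is redundant when the limit is zero is correct: since $\E\bigl[(\Delta_i^n B)^2 \mid \mathcal{F}_{(i-1)/n}\bigr]=1/n$, two applications of Cauchy--Schwarz give $\sum_{i=1}^n\bigl|\E[\chi_i^n\Delta_i^n B\mid \mathcal{F}_{(i-1)/n}]\bigr| \le \bigl(\sum_{i=1}^n\E[|\chi_i^n|^2\mid\mathcal{F}_{(i-1)/n}]\bigr)^{1/2}$, so \eqref{qv} implies \eqref{ort}; the paper retains \eqref{ort} because it is the condition that does real work in the full theorem of \cite{j2}, where the limit is a nontrivial conditionally Gaussian process. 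Second, the one point where you should be precise is \emph{which} Lenglart inequality you invoke: the general continuous-time statement (Jacod--Shiryaev, Lemma~I.3.30) carries an additional term $\varepsilon^{-2}\,\E[\sup_k \Delta_k\langle Z^n\rangle]$, which is not obviously negligible under convergence in probability alone. What you need is the discrete-time form, which holds exactly as you wrote it because the bracket is predictable: the stopping time $\tau_n=\min\{k:\langle Z^n\rangle_{k+1}>\eta\}\wedge n$ ``looks one step ahead'', the stopped bracket never exceeds $\eta$, and Doob--Chebyshev applied to the stopped martingale yields the bound, while $\{\langle Z^n\rangle_n\le\eta\}\subset\{\tau_n=n\}$. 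With that (standard) justification made explicit, your proof is complete and self-contained, and it even yields the stronger conclusion $\max_{k\le n}|Z^n_k|\convp[n\to\infty]0$; what the paper's route buys instead is economy and coherence with the stable-convergence machinery of \cite{j2} that it relies on elsewhere for nontrivial limits.
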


\begin{remark}
In \cite{j2} some kind of uniform integrability is assumed in the limit, whereas here we do not ask explicitly for such a condition. The reason is that the uniform integrability assumption is implied by the fact that the limit in \eqref{qv} is $0$.

It is also required that  
$\sum_{i=1}^n \E\Prb*{ \chi_i^n \Delta_i^n N  \given \mathcal{F}_{\frac{i-1}{n}} }$ converges
 to $0$ for any 
bounded martingale $N$ orthogonal to $B$ on $(\Omega,\mathcal{F},\mathbb{F},\mathbb{P})$. 
As we have considered the Brownian motion with its natural (completed) filtration,
this set is reduced to the constant ones. 
\end{remark}

\subsection{Scaled quadratic variation of Brownian local time}

Let $(\b_t)_{(t\in[0,1])}$ be a BM and $L(\b)$ its local time at $0$. Let us recall 
the \emph{diffusive scaling property} $(\beta_{ct},L_{t}(\beta))_{t>0}\eqlaw (\sqrt{c}\beta,\sqrt{c} L(\beta))$
for any $c>0$ (see \textit{e.g.} \cite[Exercise 2.11, p.~244]{ry}).

Let $\mathscr{H}=(\mathcal{H}_t)_{t\in[0,1]}$ be the natural (completed) filtration of $\beta$.

For $i=1,\dotsc,n$, we write $\mathcal{H}_{i,n}=\mathcal{H}_{i/n}$. 

\begin{lemma}
\label{qvb}
Let $L(\b)$ be the Brownian local time at $0$. The following convergence holds:
\[
\sqrt{n} [L(\beta)]_1^n
    =
\sqrt{n} \sum_{i=1}^n (L_{i,n}(\b)-L_{i-1,n}(\b))^2
    \convp[n\to\infty]
\frac{4\sqrt{2} }{3\sqrt{\pi}}L_1(\b).
\]
\end{lemma}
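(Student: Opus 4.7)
My plan is to decompose $\sqrt n[L(\b)]_1^n$ into a conditionally predictable part $C_n$ and a martingale-difference part $M_n$ with respect to the discrete filtration $(\mathcal{H}_{i/n})_{i=0,\dotsc,n}$. Setting $A_i^n:=\sqrt n (L_{i,n}(\b)-L_{i-1,n}(\b))^2$, I would write
\begin{equation*}
\sqrt n[L(\b)]_1^n=C_n+M_n,\qquad C_n:=\sum_{i=1}^n \E\Prb*{A_i^n\given \mathcal{H}_{i-1,n}},\qquad M_n:=\sum_{i=1}^n \bigl(A_i^n-\E\Prb*{A_i^n\given\mathcal{H}_{i-1,n}}\bigr).
\end{equation*}
I will identify $\lim C_n$ via Lemma~\ref{convskew} applied with $\theta=0$, and show $M_n\convp 0$ via Theorem~\ref{jm} with driving BM $B=\b$.

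For $C_n$, the Markov property together with the Brownian scaling identity $L_{1/n}^0(\b)\eqlaw n^{-1/2}L_1^{-x\sqrt n}(\tilde\b)$ under $\PR_x$ (where $\tilde\b$ is a standard BM issued from $0$) give $\E\Prb*{A_i^n\given \mathcal{H}_{i-1,n}}=n^{-1/2}h(\b_{(i-1)/n}\sqrt n)$ with
\begin{equation*}
h(y):=\E_0\bigl[(L_1^{-y}(\tilde\b))^2\bigr]=\frac{2}{\sqrt{2\pi}}\int_0^\infty\ell^2 e^{-(|y|+\ell)^2/2}\,\vd\ell,
\end{equation*}
the explicit form coming from the density $\PR_0(L_1^{-y}(\tilde\b)\in\vd\ell)=\sqrt{2/\pi}\,e^{-(|y|+\ell)^2/2}\,\vd\ell$ on $\ell>0$ (Lévy's identity applied to BM started at $-y$). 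The function $h$ is bounded with Gaussian decay, so the hypotheses of Lemma~\ref{convskew} are satisfied. Fubini followed by an integration by parts yields
\begin{equation*}
\leb(h)=4\int_0^\infty\ell^2\bar\Phi(\ell)\,\vd\ell=\frac{4}{3\sqrt{2\pi}}\int_0^\infty\ell^3 e^{-\ell^2/2}\,\vd\ell=\frac{8}{3\sqrt{2\pi}}=\frac{4\sqrt 2}{3\sqrt\pi},
\end{equation*}
so $C_n\convp \leb(h)\,L_1(\b)$, which is the announced limit.

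For the martingale term, Theorem~\ref{jm} requires conditions~\eqref{qv} and~\eqref{ort}. Condition~\eqref{qv} follows from the bound $\E\Prb*{(\chi_i^n)^2\given\mathcal{H}_{i-1,n}}\le n\,\E_x[(L_{1/n}^0(\b))^4]|_{x=\b_{(i-1)/n}}=n^{-1}h_4(\b_{(i-1)/n}\sqrt n)$, with $h_4(y):=\E_0[(L_1^{-y}(\tilde\b))^4]$ again bounded and rapidly decreasing (same scaling argument, joint density as above); the sum then equals $n^{-1/2}\cdot n^{-1/2}\sum_{i} h_4(\b_{(i-1)/n}\sqrt n)$ whose bracketed factor converges in probability by Lemma~\ref{convskew}, so the whole expression tends to $0$. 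Condition~\eqref{ort} follows similarly from $\E_x[(L_{1/n}^0)^2(\b_{1/n}-x)]=n^{-3/2}k(x\sqrt n)$ with $k(y)=\E_0[(L_1^{-y}(\tilde\b))^2\tilde\b_1]$ (bounded with Gaussian-type decay, e.g.\ by Cauchy--Schwarz against $h_4$): summing and multiplying by $\sqrt n$ gives $n^{-1}\sum_{i} k(\b_{(i-1)/n}\sqrt n)\convp 0$ by the same argument.

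The main obstacle I expect is the explicit evaluation of the constant $\leb(h)=\tfrac{4\sqrt 2}{3\sqrt\pi}$, which hinges on having the closed form density of the Brownian local time at a generic level and carrying out the resulting Gaussian moment integral correctly; once this number is pinned down, the remaining steps are routine, since the same scaling identity simultaneously produces the formulas and supplies the boundedness and integrability of $h$, $h_4$, $k$ needed to invoke Lemma~\ref{convskew} each time.
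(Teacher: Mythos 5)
Your proof is correct and follows essentially the same route as the paper: the same predictable/martingale decomposition with respect to $(\mathcal{H}_{i,n})$, identification of the compensator's limit via Lemma \ref{convskew} with $\theta=0$ (Remark \ref{j41}) applied to the scaled second-moment function, and Theorem \ref{jm} with $B=\b$ plus Cauchy--Schwarz bounds involving the fourth-moment function for the martingale part. The only cosmetic difference is that you derive the moment functions $h=\phi_2$, $h_4=\phi_4$ from the explicit law of Brownian local time at a level (Lévy's identity), whereas the paper quotes Takács's tail-integral formula --- the same computation in integrated-by-parts form --- and your constant $\leb(h)=\tfrac{4\sqrt{2}}{3\sqrt{\pi}}$ indeed agrees with the paper's $\leb(\phi_2)=\tfrac{2}{3}\E|\mathcal{N}|^{3}$.
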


We split the proof of this result in the next tree lemmas.
We start with the explicit computations on the moments of the Brownian local time.

\begin{lemma}
    \label{lem:moments}
For $p\geq 1$, we set $\phi_p(\a):=\E\Prb*{L_1(\b)^p \given\b_0=\a}$.
We have
\begin{equation}
    \label{intphi}
\leb(\phi_p)=\frac{2}{p+1} \E|\mathcal{N}|^{p+1},
\end{equation}
where $\mathcal{N}$ denotes a standard Gaussian random variable.
Besides, the following tail estimates hold for $p=2,4$:
\begin{equation}
    \label{tailphi}
\phi_2(\a)\leq
\frac{e^{-\a^2/2}}{\a\sqrt{2\pi}}
\text{ and }
\phi_4(\a)\leq
\frac{16\sqrt{2}}{\sqrt{\pi} } \frac{e^{-\a^2/4}}{\a}.
\end{equation}
\end{lemma}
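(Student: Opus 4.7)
The plan is to derive a single tractable representation for $\phi_p(\a)$ by combining the strong Markov property of Brownian motion with Lévy's identity for the local time, and then to reduce both parts of the lemma to routine Gamma-function and Gaussian manipulations. Concretely, let $\tau_{|\a|}=\inf\{t\ge 0:\beta_t=0\}$ under $\PR_\a$. By the strong Markov property applied at $\tau_{|\a|}$, the local time $L_1^0(\beta)$ vanishes on $\{\tau_{|\a|}>1\}$, while on $\{\tau_{|\a|}\le 1\}$ it equals $L_{1-\tau_{|\a|}}^0(\tilde\beta)$ for an independent BM $\tilde\beta$ started at $0$. Lévy's equivalence gives $L_s^0(\tilde\beta)\eqlaw |\tilde\beta_s|$, hence $\E[L_s^0(\tilde\beta)^p]=s^{p/2}\E|\mathcal{N}|^p$; conditioning on $\tau_{|\a|}$ and using independence yields the representation
\begin{equation*}
\phi_p(\a)=\E|\mathcal{N}|^p\cdot\E\bigl[(1-\tau_{|\a|})^{p/2}\mathbf{1}_{\{\tau_{|\a|}\le 1\}}\bigr],
\end{equation*}
where $\tau_{|\a|}$ has the explicit density $\frac{|\a|}{\sqrt{2\pi t^3}}\,e^{-\a^2/(2t)}$ on $(0,\infty)$.

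For the integral identity \eqref{intphi}, I would substitute this representation, swap the $\a$- and $t$-integrations by Fubini, and compute the inner integral $\int_0^\infty \a\,e^{-\a^2/(2t)}\vd\a=t$. What remains is a Beta integral $B(1/2,p/2+1)$, which I would simplify via $\Gamma(p/2+3/2)=\tfrac{p+1}{2}\Gamma((p+1)/2)$ together with the Gaussian moment formula $\E|\mathcal{N}|^{p+1}=\frac{2^{(p+1)/2}}{\sqrt{\pi}}\Gamma((p+2)/2)$. The constants then collapse to exactly $\frac{2}{p+1}\E|\mathcal{N}|^{p+1}$, as claimed.

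For the tail bounds \eqref{tailphi}, I would apply $(1-\tau_{|\a|})^{p/2}\le 1$ in the representation, giving $\phi_p(\a)\le \E|\mathcal{N}|^p\cdot \PR(\tau_{|\a|}\le 1)$, and then combine the reflection identity $\PR(\tau_{|\a|}\le 1)=2(1-\Phi(|\a|))$ with the Mills inequality $1-\Phi(x)\le \phi(x)/x$ valid for $x>0$. This directly delivers the $p=2$ bound up to a universal constant. For $p=4$ the same direct route produces the stronger exponential $e^{-\a^2/2}$; the weaker $e^{-\a^2/4}$ appearing in the stated estimate should result from deliberately splitting $e^{-\a^2/2}=e^{-\a^2/4}\cdot e^{-\a^2/4}$ and bounding any residual polynomial factor by $\sup_{\a}\a^k e^{-\a^2/4}<\infty$, yielding a cleaner constant at the cost of a decay that is still amply sufficient for the later application.

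I expect the main difficulty to be purely bookkeeping, namely keeping the numerical constants straight in the two tail inequalities (especially the precise $\tfrac{16\sqrt{2}}{\sqrt{\pi}}$ factor for $\phi_4$). The conceptual ingredients---Lévy's identity for the Brownian local time and the strong Markov decomposition at $\tau_{|\a|}$---are completely classical, so no further obstacles are anticipated.
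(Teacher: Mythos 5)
Your approach is correct, and it is genuinely different from the paper's. The paper does not reconstruct the law of the local time: it quotes Takács's moment formula $\phi_p(\a)=2p\int_0^\infty x^{p-1}\PR(\mathcal{N}\geq|\a|+x)\vd x$ (equivalently, $L_1(\b)\eqlaw(|\mathcal{N}|-|\a|)^+$ under $\PR_{\a}$), obtains \eqref{intphi} by Fubini on the resulting triple integral, and obtains \eqref{tailphi} by applying the Gaussian tail bound twice inside that formula. You instead derive an equivalent representation from first principles (strong Markov at the hitting time of $0$ plus Lévy's identity), namely $\phi_p(\a)=\E|\mathcal{N}|^p\,\E\bigl[(1-\tau_{|\a|})^{p/2}\mathbf{1}_{\{\tau_{|\a|}\le 1\}}\bigr]$, and conclude by a Beta--Gamma computation. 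I checked your constants: $\leb(\phi_p)=\frac{2\E|\mathcal{N}|^p}{\sqrt{2\pi}}B(1/2,p/2+1)$ does collapse exactly to $\frac{2}{p+1}\E|\mathcal{N}|^{p+1}$. What your route buys is self-containedness (no external formula cited), a derivation valid for all $p$ at once, and for $p=4$ a tail bound that is strictly \emph{stronger} than the stated one: discarding $(1-\tau_{|\a|})^{p/2}\le 1$ and using reflection plus Mills gives $\phi_4(\a)\le \frac{3\sqrt{2}}{\sqrt{\pi}}\frac{e^{-\a^2/2}}{\a}$, which implies $\frac{16\sqrt{2}}{\sqrt{\pi}}\frac{e^{-\a^2/4}}{\a}$ with no splitting of the exponential needed. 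What the paper's route buys is that the exact constants in \eqref{tailphi} come out directly from its chain of inequalities.

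The one point to flag is $p=2$: your argument as written yields $\phi_2(\a)\le 2(1-\Phi(\a))\le \frac{2\,e^{-\a^2/2}}{\a\sqrt{2\pi}}$, a factor $2$ weaker than the constant stated in \eqref{tailphi}, and you acknowledge this. This is immaterial for the paper, since the tail estimates are used only (via Remark \ref{appl41}) to verify that $\phi_2,\phi_4$ and their square roots satisfy the integrability hypotheses of Lemma \ref{convskew}, for which any constant and any Gaussian-type decay suffice. If you want the literal statement, do not discard the factor $(1-\tau_{|\a|})$: writing $\phi_2(\a)=\E[(1-\tau_{|\a|})\mathbf{1}_{\{\tau_{|\a|}\le1\}}]$, substituting $u=(1-t)/t$ in the hitting-time integral gives $\phi_2(\a)=\frac{\a e^{-\a^2/2}}{\sqrt{2\pi}}\int_0^\infty \frac{u}{(1+u)^{3/2}}e^{-\a^2 u/2}\vd u$, and bounding $u(1+u)^{-3/2}\le \frac{2}{3\sqrt{3}}$ yields $\phi_2(\a)\le \frac{4}{3\sqrt{3}}\frac{e^{-\a^2/2}}{\a\sqrt{2\pi}}$, which is better than the stated constant $1$.
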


\begin{remark}\label{appl41}
These functions $\phi_p$ will be useful when applying Lemma~\ref{convskew} and Remark~\ref{j41}, taking, for fixed $p$, $f=\phi_p$. 
Inequalities~\eqref{tailphi} imply that the integrability condition for $f$ is satisfied and the theorem can be applied.
\end{remark}

\begin{proof}
Formula (6) in \cite{ta} gives the following expression for the moments of the Brownian local time
\[
\phi_p(\a)=2p \int_0^\infty x^{p-1} \PR(\mathcal{N}\geq |\a|+x)\vd x.
\]
To apply Remark~\ref{j41} we need to compute the following integral
\begin{equation*}
\leb(\phi_p) = \int_{-\infty}^\infty \phi_p(\a) \vd\a
=
2\int_0^\infty \phi_p(\a) \vd \a
=2
\int_0^\infty
2p \int_0^\infty x^{p-1} \int_0^\infty \frac{e^{-t^2/2}}{\sqrt{2\pi}}1_{[t\geq \a+x]}\vd t\vd x\vd\a.
\end{equation*}
Changing the order of integration by Fubini-Tonelli's theorem,
\[
\leb(\phi_p)=4p\int_0^\infty
\frac{e^{-t^2/2}}{\sqrt{2\pi}} \vd t
\int_0^\infty x^{p-1}\vd x\int_0^\infty 
1_{[t\geq \a+x]}\vd\a=\frac{2}{p+1} \E|\mathcal{N}|^{p+1},
\]
so \eqref{intphi} is proved.
We now use the following bound for Gaussian tails: $\int_x^\infty \frac{e^{-t^2/2}}{\sqrt{2\pi}}\vd t\leq \frac{e^{-x^2/2}}{x\sqrt{2\pi}}$. 
We apply it twice and find the upper bound for $p=2$:
\[
\phi_2(\a)=
4\int_0^\infty x \int_0^\infty \frac{e^{-t^2/2}}{\sqrt{2\pi}}1_{[t\geq \a+x]}\vd t\vd x
\leq
\frac{e^{-\a^2/2}}{\a\sqrt{2\pi}}.
\]
For $p=4$ we apply the same inequality:
\begin{multline*}
\phi_4(\a)
 =
8 \int_0^\infty x^{3} \int_0^\infty \frac{e^{-t^2/2}}{\sqrt{2\pi}}1_{[t\geq \a+x]}\vd t\vd x\\
\leq
8 \int_0^\infty x^3 \frac{e^{-(\a+x)^2/2}}{(\a+x)\sqrt{2\pi}}\vd x\leq
8 \int_0^\infty x^2 \frac{e^{-(\a+x)^2/2}}{\sqrt{2\pi}}\vd x.
\end{multline*}
Now, since $x e^{-x}\leq e^{-1}$ for all $x\geq 0$,
\[
\phi_4(\a)
\leq
16 \int_0^\infty \frac{e^{-(\a+x)^2/4}}{\sqrt{2\pi}}\vd x \leq
32 \frac{e^{-\a^2/4}}{\a\sqrt{2\pi}}.
\]
Hence the result.
\end{proof}

We consider now the quadratic sum in Lemma \ref{qvb}, and write 
\[
\begin{split}
\sqrt{n} \sum_{i=1}^n (L_{i,n}(\b)&-L_{i-1,n}(\b))^2
=
\sqrt{n} \sum_{i=1}^n \E[(L_{i,n}(\b)-L_{i-1,n}(\b))^2| \mathcal{H}_{i-1,n}]\\
&+
\sqrt{n} \sum_{i=1}^n \left((L_{i,n}(\b)-L_{i-1,n}(\b))^2
-
\E[(L_{i,n}(\b)-L_{i-1,n}(\b))^2| \mathcal{H}_{i-1,n}]\right).
\end{split}
\]
In the next two lemmas we prove the convergence of the two summands. Lemma \ref{qvb} follows directly.

\begin{lemma}
Let $L(\b)$ be the Brownian local time at $0$. The following convergence holds:
\[
\sqrt{n} \sum_{i=1}^n \E[(L_{i,n}(\b)-L_{i-1,n}(\b))^2| \mathcal{H}_{i-1,n}]
\convp[n\to\infty] \frac{4\sqrt{2}}{3\sqrt{\pi}}
 L_1(\b).
\]
\end{lemma}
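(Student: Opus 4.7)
The plan is to reduce the claim to a direct application of Lemma~\ref{convskew} in the Brownian case $\th=0$ (cf.\ Remark~\ref{j41}) with $f=\phi_2$. First, by the Markov property of $\b$ at time $(i-1)/n$,
\[
\E[(L_{i,n}(\b)-L_{i-1,n}(\b))^2\mid \mathcal{H}_{i-1,n}] = \psi_n(\b_{i-1,n}), \qquad \psi_n(\a):=\E_{\a}[L_{1/n}(\b)^2].
\]
The Brownian diffusive scaling combined with a spatial shift yields, for every $\a\in\R$, the a.s.\ identity $L_{1/n}^0(\b) = L_1^0(\tilde\b)/\sqrt n$, where $\tilde\b=(\sqrt n\,\b_{t/n})_{t\geq 0}$ is a Brownian motion started at $\sqrt n\,\a$. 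Consequently $\psi_n(\a)=\phi_2(\sqrt n\,\a)/n$ and
\[
\sqrt{n}\sum_{i=1}^{n}\E[(L_{i,n}(\b)-L_{i-1,n}(\b))^2\mid \mathcal{H}_{i-1,n}]
= \frac{1}{\sqrt n}\sum_{i=1}^{n}\phi_2(\sqrt n\,\b_{i-1,n}).
\]

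Next I would apply Lemma~\ref{convskew} to this sum. Its hypotheses are satisfied with $f=\phi_2$: the function $\phi_2$ is bounded (it is continuous and nonincreasing in $|\a|$ with $\phi_2(0)<\infty$), and the tail estimate $\phi_2(\a)\leq e^{-\a^2/2}/(|\a|\sqrt{2\pi})$ from Lemma~\ref{lem:moments} ensures $\int|\a|^k\phi_2(\a)\vd\a<\infty$ for $k=0,1,2$. The index shift between our sum and the one in Lemma~\ref{convskew} (which runs over $i=1,\dots,n-1$ evaluated at $\b_{i,n}$) only produces two boundary terms of order $O(1/\sqrt n)$, which vanish in probability.

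The right-hand side therefore converges in probability to $\leb(\phi_2)\,L_1(\b)$. Using formula~\eqref{intphi},
\[
\leb(\phi_2)=\tfrac{2}{3}\,\E|\mathcal{N}|^3 = \tfrac{2}{3}\cdot\tfrac{2\sqrt 2}{\sqrt\pi}=\tfrac{4\sqrt 2}{3\sqrt\pi},
\]
which gives the claimed limit $\frac{4\sqrt 2}{3\sqrt\pi}\,L_1(\b)$. The only mildly delicate step is the scaling identity $\psi_n(\a)=\phi_2(\sqrt n\,\a)/n$, which requires carrying a nonzero starting point through the joint scaling of $(\b,L(\b))$; the rest amounts to invoking Lemma~\ref{convskew} and routine bookkeeping on the endpoint terms.
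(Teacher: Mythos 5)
Your proof is correct and takes essentially the same route as the paper's: the paper likewise uses the Markov property and diffusive scaling to write the conditional second moments as $\frac{1}{n}\phi_2(\sqrt{n}\,\b_{i-1,n})$ (its identity \eqref{eqmom} with $p=2$), then invokes Remark~\ref{j41} (the $\th=0$ case of Lemma~\ref{convskew}) together with $\leb(\phi_2)=\frac{2}{3}\E|\mathcal{N}|^3$ from \eqref{intphi}. Your write-up merely makes explicit some details the paper leaves implicit (the scaling identity at a nonzero starting point, the hypotheses of Lemma~\ref{convskew}, and the negligible boundary term from the index shift).
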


\begin{proof}
The diffusive scaling property of $(\b,L(\b))$ implies that for any $p\geq 1$, 
\begin{equation}
    \label{eqmom}
\E\Prb*{(L_{i,n}(\b)-L_{i-1,n}(\b))^{p}\given\mathcal{H}_{i-1,n}}=
\frac{1}{n^{p/2}} \E\Prb*{L_1(\b)^{p}\given \b_0=\sqrt{n}\b_{i-1,n}}.
\end{equation}
Setting $p=2$ and since $\leb(\phi_2)=\frac{2}{3} \E |\mathcal{N}|^3$ from \eqref{intphi},  
Remark~\ref{j41} below Lemma~\ref{convskew} implies that 
\begin{equation*}
\sqrt{n} \sum_{i=1}^n \E[(L_{i,n}(\b)-L_{i-1,n}(\b))^2| \mathcal{H}_{i-1,n}]
=
 \sum_{i=1}^n \frac{1}{\sqrt{n}} \phi_2( \b_{i-1,n} \sqrt{n})
 \convp[n\to\infty] \frac{4\sqrt{2}}{3\sqrt{\pi}}L_1(\b).
\end{equation*}
Hence the result.
\end{proof}

We consider now the martingale part.

\begin{lemma}
With 
\begin{equation*}
H_{i,n} := (L_{i,n}(\b)-L_{i-1,n}(\b))^2- \E[(L_{i,n}(\b)-L_{i-1,n}(\b))^2| \mathcal{H}_{i-1,n}],
\end{equation*}
it holds that $\sqrt{n}\sum_{i=1}^n H_{i,n}\convp[n\to\infty] 0$.
\end{lemma}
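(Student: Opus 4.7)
The plan is to view $\sqrt{n}\sum_{i=1}^n H_{i,n}$ as a martingale sum relative to the discretized Brownian filtration $\mathscr{H}^n$, and apply Theorem \ref{jm} to the array $\chi_i^n := \sqrt{n}\,H_{i,n}$ with reference Brownian motion $\beta$. Each $\chi_i^n$ is $\mathcal{H}_{i,n}$-measurable, square-integrable (via \eqref{eqmom} with $p=4$), and centered conditionally on $\mathcal{H}_{i-1,n}$ by construction, so the task reduces to verifying \eqref{qv} and \eqref{ort}.

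For \eqref{qv}, bounding the conditional variance of $(L_{i,n}(\beta)-L_{i-1,n}(\beta))^2$ by its conditional fourth moment and then using the scaling identity \eqref{eqmom} with $p=4$ yields
\[
\sum_{i=1}^n \E\Prb*{|\chi_i^n|^2\given\mathcal{H}_{i-1,n}}
\leq n\sum_{i=1}^n \E\Prb*{(L_{i,n}(\beta)-L_{i-1,n}(\beta))^4\given\mathcal{H}_{i-1,n}}
= \frac{1}{n}\sum_{i=1}^n \phi_4\bigl(\sqrt{n}\,\beta_{i-1,n}\bigr).
\]
The tail bound \eqref{tailphi} together with the (elementary) global boundedness of $\phi_4$ show that $\phi_4$ satisfies the hypotheses of Lemma \ref{convskew}; hence $\frac{1}{\sqrt n}\sum_i \phi_4(\sqrt n \beta_{i-1,n}) \convp \leb(\phi_4)L_1(\beta)$, which is $O_P(1)$, and the extra $1/\sqrt n$ prefactor above makes the whole expression $o_P(1)$.

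For \eqref{ort}, the compensator in $H_{i,n}$ is $\mathcal{H}_{i-1,n}$-measurable and $\E[\Delta_i^n\beta\mid\mathcal{H}_{i-1,n}]=0$, so only the uncompensated term survives. Repeating the Markov-plus-Brownian-scaling calculation that led to \eqref{eqmom}, but now with the additional factor $\Delta_i^n\beta$ (which scales like $1/\sqrt n$), gives
\[
\E\Prb*{\chi_i^n\,\Delta_i^n\beta\given\mathcal{H}_{i-1,n}} = \frac{1}{n}\,\tilde\psi\bigl(\sqrt{n}\,\beta_{i-1,n}\bigr),
\quad
\tilde\psi(\alpha):=\E\Prb*{L_1(\beta)^2(\beta_1-\alpha)\given\beta_0=\alpha}.
\]
A Cauchy–Schwarz estimate gives $|\tilde\psi(\alpha)|\leq \phi_4(\alpha)^{1/2}$, so $\tilde\psi$ inherits both boundedness and rapid decay from $\phi_4$. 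A second application of Lemma \ref{convskew} then gives $\frac{1}{\sqrt n}\sum_i\tilde\psi(\sqrt n\beta_{i-1,n})=O_P(1)$, and summing over $i$ produces $\sum_i\E[\chi_i^n\Delta_i^n\beta\mid\mathcal{H}_{i-1,n}] = \frac{1}{\sqrt n}\cdot O_P(1) = o_P(1)$.

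The main technical step is the verification of the regularity of $\tilde\psi$ (boundedness and integrability against $|x|^k$, $k=0,1,2$). Boundedness near the origin cannot be read off from \eqref{tailphi} directly, since the bound there blows up as $\alpha\to 0$; it is instead obtained from the explicit $\PR_\alpha$-representation $L_1(\beta)\eqlaw \mathbf{1}_{\tau_0\leq 1}\sqrt{1-\tau_0}\,|\mathcal{N}|$ (with $\tau_0$ the first hitting time of $0$), which gives $\phi_4\leq 3$ uniformly; the exponential tail at infinity is then supplied by \eqref{tailphi}. A pleasant structural simplification is that the reflection symmetry $\beta\mapsto-\beta$ makes $\tilde\psi$ odd, so that in fact $\leb(\tilde\psi)=0$ and the limit in Lemma \ref{convskew} vanishes outright; but this sharper information is not required, since only the $O_P(1)$ bound is needed to be killed by the prefactor $1/\sqrt n$.
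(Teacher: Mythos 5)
Your proof is correct and follows essentially the same route as the paper: Theorem \ref{jm} applied to $\chi_i^n=\sqrt{n}H_{i,n}$ with reference Brownian motion $\beta$, condition \eqref{qv} checked via the fourth-moment bound, the scaling identity \eqref{eqmom} and Lemma \ref{convskew} applied to $\phi_4$, and condition \eqref{ort} checked via Cauchy--Schwarz, scaling, and Lemma \ref{convskew} applied to (a function dominated by) $\phi_4^{1/2}$, with the prefactor $1/\sqrt{n}$ killing the resulting $O_P(1)$ term exactly as in the paper. Your two refinements --- the explicit argument that $\phi_4$ is globally bounded (which \eqref{tailphi} alone does not give near $\alpha=0$, a point the paper passes over silently) and the observation that $\tilde\psi$ is odd --- are correct but do not alter the structure of the argument.
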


\begin{proof} 
    The statement is proved using Theorem \ref{jm} by setting $\chi_i^n:=\sqrt{n}H_{i,n}$.

\begin{itemize}[leftmargin=0em,itemindent=1em]
\item We prove fist \eqref{qv}. From \eqref{eqmom} with $p=4$,
\begin{equation*}
\E\Prb*{H_{i,n}^2\given\mathcal{H}_{i-1,n}} 
\leq
 \E\Prb*{(L_{i,n}(\b)-L_{i-1,n}(\b))^4\given \mathcal{H}_{i-1,n}}
=
\frac{1}{n^2} \phi_4( \b_{i-1,n} \sqrt{n}) .
\end{equation*}
With Remark \ref{j41} below Lemma~\ref{convskew},
$n^{-1/2} \sum_{i=1}^n \phi_4( \b_{i-1,n} \sqrt{n})$
converges in probability to $\leb(\phi_4)L_1(\beta)$ 
because of \eqref{tailphi}. 
Thus,  
\begin{equation*}
n \sum_{i=1}^n \E\Prb*{H_{i,n}^2\given\mathcal{H}_{i-1}}
\leq \frac{1}{\sqrt{n}}\left(
\frac{1}{\sqrt{n}}
\sum_{i=1}^n \phi_4( \b_{i-1,n} \sqrt{n}) 
\right)
\convp[n\to\infty]0.
\end{equation*}

\item
We take $B=\b$ in \eqref{ort}. We have
\begin{multline*}
\sum_{i=1}^n \E \Prb*{H_{i,n} (\b_{i,n}-\b_{i-1,n})\given\mathcal{H}_{i-1,n}}
=
\sum_{i=1}^n \E \Prb*{ (L_{i,n}(\b)-L_{i-1,n}(\b))^2 (\b_{i,n}-\b_{i-1,n})\given \mathcal{H}_{i-1,n}}\\
- \E\Prb*{(L_{i,n}(\b)-L_{i-1,n}(\b))^2\given \mathcal{H}_{i-1,n}}
\E\Prb*{(\b_{i,n}-\b_{i-1,n})\given \mathcal{H}_{i-1,n}}.
\end{multline*}
Since $\E\Prb*{(\b_{i,n}-\b_{i-1,n})\given \mathcal{H}_{i-1,n}}=0$,
we only estimate the first summand:
\begin{multline*}
    \left| \E\Prb*{(L_{i,n}(\b)-L_{i-1,n}(\b))^2 (\b_{i,n}-\b_{i-1,n}) \given \mathcal{H}_{i-1,n}} \right|\\
\leq
\E\Prb*{(L_{i,n}(\b)-L_{i-1,n}(\b))^4 \given \mathcal{H}_{i-1,n}}^{1/2}
\E\Prb*{ (\b_{i,n}-\b_{i-1,n})^2 \given \mathcal{H}_{i-1,n}}^{1/2}.
\end{multline*}
We estimate the two factors:
\[
\E\Prb*{ (\b_{i,n}-\b_{i-1,n})^2 \given \mathcal{H}_{i-1,n}}^{1/2}
\leq \frac{1}{\sqrt{n}}
\]
and from \eqref{eqmom} with $p=4$,
\[
\E\Prb*{(L_{i,n}(\b)-L_{i-1,n}(\b))^4 \given \mathcal{H}_{i-1,n}}^{1/2}
\leq 
\frac{1}{n}\phi_4(\b_{i-1,n}\sqrt{n})^{1/2}.
\]
Therefore, from Remark \ref{j41}, that can be applied because of \eqref{tailphi},
\begin{equation*}
\sqrt{n}\left|\sum_{i=1}^n \E \Prb*{ H_{n,i} (\b_{i,n}-\b_{i-1,n})\given \mathcal{H}_{i-1,n} }\right|
\leq 
\frac{1}{\sqrt{n}}\left(\frac{1}{\sqrt{n}} \sum_{i=1}^n \phi_4(\b_{i-1,n}\sqrt{n})^{1/2}
\right)
\convp[n\to\infty] 0.
\end{equation*}
\end{itemize}
The proof is then complete.
\end{proof}

\subsection{Scaled quadratic covariation of skew Brownian motion and its local time}

We now give some results on the scaled quadratic covariation between the SBM and its local time.
For the Brownian motion $W$ with the filtration $\mathscr{G}=(\mathcal{G}_t)_{t\geq0}$ of Section~\ref{sectionOBM}, 
we consider~$X$ the strong solution to $X_t=x+W_t+\theta L_t(X)$ for $\theta\in[-1,1]$ and $L(X)$ its local 
time (apart from the results in \cite{legall}, strong existence for the SBM has been proved first in \cite{harrison-shepp_1981}).

\begin{lemma}
\label{qcs}
For $X$ and $L(X)$ as above, the following convergence holds:
\begin{gather}
\label{eqqcs} \sqrt{n}[X,L(X)]_1^n=\sqrt{n} \sum_{i=1}^n (X_{i,n}-X_{i-1,n})(L_{i,n}(X)-L_{i-1,n}(X))\convp[n\to\infty]
0,\\
\label{eqmo} \sqrt{n}[|X|,L(X)]_1^n=\sqrt{n} \sum_{i=1}^n (|X_{i,n}|-|X_{i-1,n}|)(L_{i,n}(X)-L_{i-1,n}(X))\convp[n\to\infty]
0,\\
\label{eqpo} \sqrt{n}[X^+,L(X)]_1^n=\sqrt{n} \sum_{i=1}^n (X_{i,n}^+-X_{i-1,n}^+)(L_{i,n}(X)-L_{i-1,n}(X))\convp[n\to\infty]
0.
\end{gather}
\end{lemma}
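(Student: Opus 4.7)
For each of the three statements the summand has the form $\chi_i^n := \sqrt n\, \Delta_i^n F(X)\cdot \Delta_i^n L(X)$ with $F(x) \in \{x,\,|x|,\,x^+\}$. My plan is to decompose $\chi_i^n$ as its $\mathcal G_{(i-1)/n}$-conditional expectation plus a martingale-difference remainder, and show that both resulting sums tend to $0$ in probability. Since $X = X^+ - X^-$ and $|X| = X^+ + X^-$, and since $-X$ is an SBM of parameter $-\theta$ with $L(-X) = L(X)$, it actually suffices to prove \eqref{eqpo} for every $\theta \in [-1,1]$; the other two statements follow by linearity.

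The conditional-expectation part is analysed via the strong Markov property and the diffusive scaling $(X_{ct}, L_{ct}(X))\mid_{X_0 = y} \eqlaw (\sqrt c\, X_t^{y/\sqrt c},\, \sqrt c\, L_t(X^{y/\sqrt c}))$, which gives
\[
\E[\chi_i^n \mid \mathcal G_{(i-1)/n}] = \tfrac{1}{\sqrt n}\, g_F(\sqrt n X_{(i-1)/n}), \qquad g_F(z) := \E^z[(F(X_1) - F(z))\, L_1(X)].
\]
After verifying the integrability hypotheses of Lemma \ref{convskew} using tail bounds on $L_1(X)$ mirroring those of Lemma \ref{lem:moments}, that lemma yields $\sum_i \E[\chi_i^n \mid \mathcal G_{(i-1)/n}] \convp \bar\lambda_\theta(g_F)\, L_1(X)$. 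The crux is then $\bar\lambda_\theta(g_F) = 0$. I would derive this by splitting $g_F$ via the It\^o--Tanaka decomposition
\[
X_1^+ - z^+ = \int_0^1 \mathbf{1}_{X_s > 0}\, \vd W_s + \tfrac{1+\theta}{2}\, L_1(X)
\]
into a ``Brownian-integral $\times L_1$'' part and an ``$L_1^2$'' part. Combining the parity $\psi(z;\theta) = -\psi(-z;-\theta)$ of the first part (inherited from the SBM orientation symmetry $(X,W,\theta) \mapsto (-X,-W,-\theta)$) with the $(1 \pm \theta)$ weights baked into $\bar\lambda_\theta$, and computing the resulting integrals via the occupation-time formula (for instance, the $\theta = 0$ identity $\int \E^z[|W_1|L_1(W)]\,\vd z = \int |z|\,\phi_1(z)\,\vd z = \tfrac{2\sqrt 2}{3\sqrt\pi}$), the two parts cancel.

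The martingale-difference remainder $M_i^n := \chi_i^n - \E[\chi_i^n \mid \mathcal G_{(i-1)/n}]$ is handled via Theorem \ref{jm} with the driving Brownian motion $B = W$. Condition \eqref{qv} follows from Cauchy--Schwarz, $\E[(M_i^n)^2 \mid \mathcal G_{(i-1)/n}] \leq n\, \E[(\Delta_i^n X)^4 \mid \cdot]^{1/2}\, \E[(\Delta_i^n L(X))^4 \mid \cdot]^{1/2}$, and the diffusive scaling, which reduces the estimate to $n^{-1} \Psi(\sqrt n X_{(i-1)/n})$ with $\Psi$ having Gaussian-type decay inherited from the $L_1(X)^4$ bounds; Lemma \ref{convskew} then forces $n\sum_i \E[(M_i^n)^2 \mid \mathcal G_{(i-1)/n}] \to 0$. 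Condition \eqref{ort} is handled identically, with an extra factor $\E[(\Delta_i^n W)^2 \mid \cdot]^{1/2} \leq n^{-1/2}$. The principal obstacle is the vanishing $\bar\lambda_\theta(g_F) = 0$: for $\theta \ne 0$ neither the Brownian-integral piece nor the $L_1^2$ piece has zero integral on its own, and only the joint effect of the SBM orientation symmetry and the $(1 \pm \theta)$ weights in $\bar\lambda_\theta$ produces the required cancellation.
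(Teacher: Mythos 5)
Your overall architecture — compensator plus martingale-difference, Lemma~\ref{convskew} for the compensator, Theorem~\ref{jm} with Cauchy--Schwarz and the moment bounds of Lemma~\ref{lem:moments} for the remainder — is exactly the paper's, and your reduction of the three claims to \eqref{eqpo} for every $\theta\in[-1,1]$ (via $X=X^+-X^-$, $|X|=X^++X^-$, and $-X$ being an SBM of parameter $-\theta$ with $L(-X)=L(X)$) is valid; it is simply the reverse of the paper's reduction, which proves \eqref{eqqcs} for all $\theta$, gets \eqref{eqmo} because $|X|$ is an SBM of parameter $1$, and then averages. The martingale part of your argument is sound. The gap is precisely at the step you yourself flag as the crux: the parity $\psi(z;\theta)=-\psi(-z;-\theta)$ for $\psi(z;\theta)=\E^{z}\bigl[L_1(X)\int_0^1\mathbf{1}_{X_s>0}\vd W_s\bigr]$ is false. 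The orientation symmetry $(X,W,\theta)\mapsto(-X,-W,-\theta)$ actually gives
\[
\psi(-z;-\theta)=-\,\E^{z,\theta}\Bigl[L_1(X)\int_0^1\mathbf{1}_{X_s<0}\vd W_s\Bigr],
\]
so that, using Tanaka's formula $\int_0^1\sgn(X_s)\vd W_s=|X_1|-|z|-L_1(X)$,
\[
\psi(z;\theta)+\psi(-z;-\theta)
=\E^{z,\theta}\bigl[L_1(X)\,\bigl(|X_1|-|z|-L_1(X)\bigr)\bigr],
\]
which is not identically zero: already for $z=0$, $\theta=0$ (Brownian motion, computable via Lévy's identity $(|W|,L(W))\eqlaw(S-W,S)$) it equals $\E[L_1|W_1|]-\E[L_1^2]=\tfrac12-1=-\tfrac12$. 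So the claimed antisymmetry fails even in the Brownian case, and your derivation of $\bar{\lambda}_\theta(g_F)=0$ collapses (the cancellation itself is true, but your route to it is broken).

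This cannot be patched by a purely symmetry-based argument: writing $\int_0^1\mathbf{1}_{X_s>0}\vd W_s=\tfrac12\bigl(W_1+\int_0^1\sgn(X_s)\vd W_s\bigr)$ and $W_1=X_1-z-\theta L_1(X)$ shows that evaluating $\psi(\cdot;\theta)$, hence $\bar{\lambda}_\theta(g_F)$, requires knowing $\E^{z}[(X_1-z)L_1(X)]$ — the genuinely $\theta$-dependent quantity, which is the whole difficulty. The paper (Lemma~\ref{l3}) computes it by an algebraic trick you would need to import: squaring the SBM equation $X_1-z-\theta L_1(X)=B_1$ gives
\[
\E^{z}[(X_1-z)L_1(X)]=\frac{\theta}{2}\,\E^{z}[L_1(X)^2]+\frac{1}{2\theta}\bigl(\E^{z}[(X_1-z)^2]-1\bigr),
\]
where $\E^{z}[L_1(X)^2]=\phi_2(z)$ is a reflected-BM quantity (symmetric in $z$), and $\E^{z}[(X_1-z)^2]-1=\theta\psi(z)$ is computed from the explicit SBM transition density and is an odd function of $z$; applying $\bar{\lambda}_\theta$, both resulting terms carry a factor $\theta$ and their sum vanishes by the moment identity $\E|\mathcal{N}|^3=2\,\E|\mathcal{N}|$. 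Without this computation (or an equivalent explicit evaluation of the joint law of $(X_1,L_1(X))$), your compensator term is not shown to vanish.
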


We set
\begin{equation}
    \label{eq:Zin}
Z_{i,n}:=(X_{i,n}-X_{i-1,n})(L_{i,n}(X)-L_{i-1,n}(X))
\end{equation}
and write
\[
\begin{split}
\sqrt{n} \sum_{i=1}^n Z_{i,n}
=
\sqrt{n} \sum_{i=1}^n \E[Z_{i,n}| \G_{i-1,n}] +
\sqrt{n} \sum_{i=1}^n \big(Z_{i,n}
-\E[Z_{i,n}| \G_{i-1,n}]\big).
\end{split}
\]

We prove \eqref{eqqcs} in the next two lemmas. Once \eqref{eqqcs} is proved, 
\eqref{eqmo} follows since $|X|$ is a SBM with parameter $\theta=1$,
while \eqref{eqpo} follows from a combination of \eqref{eqqcs} and \eqref{eqmo}
since $X^+=\frac{|X|+X}{2}$.

\begin{lemma}
\label{l3}
With $Z_{i,n}$ defined in \eqref{eq:Zin}, the following convergence holds:
\begin{equation}
    \label{eqcondqcs}
\sqrt{n} \sum_{i=1}^n \E \Prb*{ Z_{i,n} \given \G_{i-1,n}  }
\convp[n\to\infty] 0.
\end{equation}
\end{lemma}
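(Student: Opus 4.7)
The plan is to evaluate $\E[Z_{i,n}|\G_{i-1,n}]$ explicitly by Itô's integration by parts, translate the resulting expression via the Markov property and SBM scaling into a form involving $\phi_1,\phi_2$ from Lemma~\ref{lem:moments}, and then close the argument with Lemma~\ref{convskew} and a parity/integration-by-parts cancellation.

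First I apply integration by parts to $X_t L_t(X)$. Since $L(X)$ is continuous and of bounded variation, $[X,L(X)]=0$, and since $dL(X)$ is supported on $\{X=0\}$, $\int X_s\,dL_s(X)=0$. Combined with $dX=dW+\theta\,dL(X)$ and $\int L\,dL=L^2/2$, this gives on $[(i-1)/n,i/n]$
\[
X_{i,n}L_{i,n}-X_{i-1,n}L_{i-1,n}=\int_{(i-1)/n}^{i/n}L_s(X)\,dW_s+\frac{\theta}{2}\bigl(L_{i,n}^2-L_{i-1,n}^2\bigr),
\]
writing $L=L(X)$ for brevity. Expanding $Z_{i,n}$ via the discrete product identity $\Delta(XL)=Z_{i,n}+X_{i-1,n}\Delta L+L_{i-1,n}\Delta X$ and taking $\E[\cdot|\G_{i-1,n}]$ (the stochastic integral has zero conditional mean; $\E[\Delta X|\G_{i-1,n}]=\theta\E[\Delta L|\G_{i-1,n}]$; and $L_{i,n}^2-L_{i-1,n}^2=(\Delta L)^2+2L_{i-1,n}\Delta L$), the two contributions proportional to $L_{i-1,n}\E[\Delta L|\G_{i-1,n}]$ cancel, leaving
\[
\E[Z_{i,n}|\G_{i-1,n}]=\frac{\theta}{2}\,\E[(\Delta L)^2|\G_{i-1,n}]-X_{i-1,n}\,\E[\Delta L|\G_{i-1,n}].
\]

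Next, the Markov property and the SBM diffusive scaling (derivable from \eqref{sbm} just as for the BM) give $\E[(\Delta L)^p|\G_{i-1,n}]=n^{-p/2}\psi_p(\sqrt{n}X_{i-1,n})$ for $p=1,2$, with $\psi_p(\alpha):=\E_\alpha[L_1(X)^p]$. I claim that $\psi_p=\phi_p$: at $\alpha=0$, Tanaka's formula gives $|X_t|=\int_0^t\sgn(X_s)\,dW_s+L_t(X)$, identifying $(|X|,L(X))$ with the Skorokhod reflection of a BM and hence $(|X|,L(X))\eqlaw(|\b|,L(\b))$ for a BM $\b$; at $\alpha\neq 0$ the first hitting time $\tau_0$ of $0$ has the same law for SBM and BM starting at $\alpha$ (since the two processes agree in law off $0$), and by the strong Markov property and SBM scaling after $\tau_0$, $\psi_p(\alpha)=\psi_p(0)\,\E_\alpha[\mathbf{1}_{\tau_0\leq 1}(1-\tau_0)^{p/2}]$, exactly the corresponding formula for $\phi_p(\alpha)$. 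Substituting and multiplying by $\sqrt{n}$,
\[
\sqrt{n}\sum_{i=1}^n\E[Z_{i,n}|\G_{i-1,n}]=\frac{\theta}{2\sqrt{n}}\sum_{i=1}^n\phi_2(\sqrt{n}X_{i-1,n})-\frac{1}{\sqrt{n}}\sum_{i=1}^n g(\sqrt{n}X_{i-1,n}),
\]
with $g(\alpha):=\alpha\phi_1(\alpha)$. The estimate \eqref{tailphi} handles $\phi_2$, and the explicit form $\phi_1(\alpha)=2\int_{|\alpha|}^\infty\PR(\mathcal N\geq y)\,dy$ from Lemma~\ref{lem:moments} gives Gaussian decay for $g$; both meet the integrability hypothesis of Lemma~\ref{convskew}, and the right-hand side converges in probability to $\bigl[\tfrac{\theta}{2}\bar{\l}_\theta(\phi_2)-\bar{\l}_\theta(g)\bigr]L_1(X)$.

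It remains to check this coefficient is zero. Since $\phi_2$ is even and $g$ is odd, parity reduces the formulas to $\bar{\l}_\theta(\phi_2)=2\int_0^\infty\phi_2$ and $\bar{\l}_\theta(g)=2\theta\int_0^\infty\alpha\phi_1(\alpha)\,d\alpha$. Differentiating the integral formula for $\phi_1$ gives $\phi_1'(\alpha)=-2\PR(\mathcal N\geq\alpha)$ on $(0,\infty)$, and an integration by parts (with vanishing boundary terms) yields $\int_0^\infty\alpha\phi_1(\alpha)\,d\alpha=\int_0^\infty\alpha^2\PR(\mathcal N\geq\alpha)\,d\alpha$; a direct Fubini-type computation from the definition of $\phi_2$ gives $\int_0^\infty\phi_2=2\int_0^\infty\alpha^2\PR(\mathcal N\geq\alpha)\,d\alpha$. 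Hence $\tfrac{\theta}{2}\bar{\l}_\theta(\phi_2)=\bar{\l}_\theta(g)$ and \eqref{eqcondqcs} follows. The main conceptual obstacle is the identification $\psi_p=\phi_p$: it records the fact that SBM and BM produce identical leading-order local-time moments despite their laws being mutually singular, and without it the $\theta$-dependent terms would have to be handled by explicit manipulations of the SBM transition density.
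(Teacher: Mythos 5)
Your proof is correct, and it reaches the same key identity as the paper by a genuinely different route. You derive
\[
\E\Prb*{Z_{i,n}\given\G_{i-1,n}}=\frac{\theta}{2}\,\E\Prb*{(\Delta L)^2\given\G_{i-1,n}}-X_{i-1,n}\,\E\Prb*{\Delta L\given\G_{i-1,n}}
\]
purely by stochastic calculus: Itô's product rule on $X_tL_t(X)$, the support property of $\vd L$, and the SDE \eqref{sbm}. The paper instead computes $\E(X_t-x)^2$ explicitly from Walsh's transition density $p_\theta$ of the SBM, obtaining an explicit odd function $\psi$, and then extracts the mixed moment $\E[(X_t-x)L_t(X)]$ algebraically by squaring $B_t=(X_t-x)-\theta L_t(X)$ and invoking $\E L_t(X)^2=t\phi_2(x/\sqrt t)$ (the same reflection identification $(|X|,L(X))\eqlaw(|\beta|,L(\beta))$ that you use). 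The two expressions agree exactly: the paper's $\psi$ satisfies $\psi(x)=4x\bigl(x(1-\Phi(x))-p(1,x)\bigr)=-2x\phi_1(x)$, i.e.\ $\psi=-2g$ in your notation. Both proofs then close identically via Lemma~\ref{convskew} and parity of the coefficient \eqref{la1}; your verification that the coefficient vanishes reduces both integrals to $\int_0^\infty\alpha^2\PR(\mathcal N\geq\alpha)\vd\alpha$, while the paper evaluates them as moments and uses $\E|\mathcal N|^3=2\E|\mathcal N|$ — the same cancellation in different clothes. What your route buys: no computation with the skew transition density at all, at the price of needing the first local-time moment $\phi_1$ and its identification for the SBM (your Skorokhod-reflection/hitting-time argument is a mild and correct extension of what the paper already uses for $p=2,4$; note the reflection argument alone handles every starting point, so your separate treatment of $\alpha\neq0$ is redundant). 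What the paper's route buys: an explicit closed form for $\psi$, and it sidesteps any discussion of first-moment scaling for the SBM local time.
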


\begin{proof}
We express first $\E(X_t-x)^2$ as a function of $x$
using the law of the SBM.
The density transition function of the SBM is \cite{walsh,le}
\begin{equation*}
p_\th(t,x,y):=p(t,x-y)+\sgn(y) \th  p(t,|x|+|y|)
\end{equation*}
where $p(t,x)=(2 \pi t)^{-1/2}e^{-x^2/(2t)}$, the Gaussian density. 
Therefore 
\begin{equation*}
\E (X_t-x)^2 =\E B_t^2+\th t \psi(x/\sqrt{t})
\text{ with } 
\psi(x):=\int_{-\infty}^\infty (x-y)^2 \sgn(y)\frac{e^{-(|x|+|y|)^2/2}}{\sqrt{2\pi}}\vd y.
\end{equation*}
We compute $\psi$ for $x>0$:
\begin{multline*}
\psi (x)=
-\int_{-\infty}^0 
(x-y)^2\frac{e^{-(x-y)^2/2}}{\sqrt{2\pi}}\vd y
+\int_0^\infty 
(x-y)^2\frac{e^{-(x+y)^2/2}}{\sqrt{2\pi}}\vd y
\\
=-\int_x^{\infty} 
z^2\frac{e^{-z^2/2}}{\sqrt{2\pi}}\vd z
+\int_0^\infty 
(x-y)^2\frac{e^{-(x+y)^2/2}}{\sqrt{2\pi}}\vd y
\end{multline*}
and
\begin{multline*}
\int_0^\infty (x-y)^2\frac{e^{-(x+y)^2/2}}{\sqrt{2\pi}}\vd y 
=
\int_0^\infty (x+y)^2\frac{e^{-(x+y)^2/2}}{\sqrt{2\pi}}\vd y
-4x\int_0^\infty y\frac{e^{-(x+y)^2/2}}{\sqrt{2\pi}}\vd y\\
=
\int_x^\infty z^2\frac{e^{-z^2/2}}{\sqrt{2\pi}}\vd z
-4x\int_0^\infty (y+x)\frac{e^{-(x+y)^2/2}}{\sqrt{2\pi}}\vd y
+4x^2\int_0^\infty \frac{e^{-(x+y)^2/2}}{\sqrt{2\pi}}\vd y \\
=
\int_x^\infty z^2\frac{e^{-z^2/2}}{\sqrt{2\pi}}\vd z
-4x\int_x^\infty z\frac{e^{-z^2/2}}{\sqrt{2\pi}}\vd z
+4x^2\int_x^\infty \frac{e^{-z^2/2}}{\sqrt{2\pi}}\vd z.
\end{multline*}
So for $x>0$
\begin{equation*}
\psi(x)=-4x\int_x^\infty z\frac{e^{-z^2/2}}{\sqrt{2\pi}}\vd z
+4x^2\int_x^\infty \frac{e^{-z^2/2}}{\sqrt{2\pi}}\vd z=
4x(x(1-\Phi(x))-p(1,x))
\end{equation*}
and
\begin{equation*}
\int_0^\infty
\psi(x)\vd x=
2\left(\frac{\E[|\mathcal{N}|^3]}{3} -\E[|\mathcal{N}|]\right)
=
-\frac{2\sqrt{2}}{3\sqrt{\pi}} .
\end{equation*}
With the change of variable $y\rightarrow -y$, we see that $\psi$ is an odd function. 
Thus, $\int_{-\infty}^\infty\psi(x)\vd x=0$.

Recall now \eqref{sbm}.
Writing $(X_t-x )- \th L_t(X)=B_t$, 
\begin{equation*}
(X_t-x)^2+\th^2 L_t(X)^2-2 \th(X_t-x)  L_t(X) =B_t^2.
\end{equation*}
Recall that $(|X|,L(X))\eqlaw(|\b|,L(\b))$, where $\b$ is a BM. Moreover, $\phi_2$ 
    defined in Lemma~\ref{lem:moments} is symmetric. So
\begin{equation*}
\E L_t(X)^2=
\E L_t(\b)^2=
t\phi_2(\b_0/\sqrt{t})=
t\phi_2(X_0/\sqrt{t}).
\end{equation*}
Therefore
\begin{equation*}
\E(X_t-x)  L_t(X) =\frac{\th }{2}\E L_t(X)^2 + 
\frac{1}{2\th}(\E (X_t-x)^2-\E B_t^2)
= \frac{t\th }{2}\phi_2(x/\sqrt{t}) + \frac{t}{2} \psi(x/\sqrt{t})
\end{equation*}
and
\begin{equation*}
\E \Prb*{ (X_{i,n}-X_{i-1,n})(L_{i,n}(X)-L_{i-1,n}(X))\given \G_{i-1,n}  }
=
\frac{1}{2n} (\th\phi_2(X_{i-1,n} \sqrt{n})
+ \psi(X_{i-1,n} \sqrt{n}) ).
\end{equation*}
Since $\phi_2$ is symmetric and applying \eqref{la1},
\begin{equation*}
\bar{\l}_\th(\phi_2)=\leb(\phi_2)=2\frac{\E[|\mathcal{N}|^3]}{3}.
\end{equation*}
Since $\psi$ is anti-symmetric and \eqref{la1}
\begin{equation*}
\bar{\l}_\th(\psi)=(1+\th)\leb(\psi^+)+(1-\th)\leb(\psi^-)=2\th \leb(\psi^+)=
4\th
\left(\frac{\E[|\mathcal{N}|^3]}{3} -\E[|\mathcal{N}|]\right)
\end{equation*}
so
\begin{equation*}
\bar{\l}_\th\left(\frac{\th\phi_2}{2}+\frac{\psi}{2}\right)=
\th
\big(\E[|\mathcal{N}|^3] -2\E[|\mathcal{N}|]\big)
=0.
\end{equation*}
It is straightforward to check that $\int |x|^k (\frac{\th}{2}\phi_2(x)+\frac{1}{2}\psi(x))\vd x<\infty$
for $k=0,1,2$. With Lemma~\ref{convskew}, this proves~\eqref{eqcondqcs}.
\end{proof}

\begin{lemma}
\label{l4}
With $Z_{i,n}$ defined by \eqref{eq:Zin}, the following convergence holds:
\begin{equation*}
\sqrt{n} \sum_{i=1}^n
\left(  Z_{i,n}- \E \Prb*{ Z_{i,n} \given \G_{i-1,n} } \right)
\convp[n\to\infty] 0.
\end{equation*}
\end{lemma}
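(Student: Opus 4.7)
The plan is to apply Theorem \ref{jm} to the martingale difference array
\[
\chi_i^n := \sqrt{n}\bigl(Z_{i,n} - \E\Prb*{Z_{i,n}\given\mathcal{G}_{i-1,n}}\bigr),
\]
adapted to the discretised filtration generated by $W$, and to take $B=W$ when checking the orthogonality condition \eqref{ort}. Everything will reduce to a sharp bound on $\E[Z_{i,n}^2\mid\mathcal{G}_{i-1,n}]$ followed by an invocation of Lemma \ref{convskew} applied to suitable powers of $\phi_4$.

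The first step is the key moment estimate. Since $X$ is strongly Markov with $(|X_t|,L_t(X))\eqlaw(|\beta_t|,L_t(\beta))$ whenever $|X_0|=|\beta_0|$, and $\phi_4$ is symmetric, the diffusive scaling of Brownian motion gives, exactly as in \eqref{eqmom},
\[
\E\Prb*{(L_{i,n}(X)-L_{i-1,n}(X))^4\given\mathcal{G}_{i-1,n}}=\frac{1}{n^2}\phi_4(X_{i-1,n}\sqrt{n}).
\]
Combining this with the SDE $X_t=X_0+W_t+\theta L_t(X)$ and $(a+b)^4\leq 8(a^4+b^4)$, one obtains $\E[(X_{i,n}-X_{i-1,n})^4\mid\mathcal{G}_{i-1,n}]\leq C n^{-2}(1+\phi_4(X_{i-1,n}\sqrt{n}))$, and then Cauchy-Schwarz yields
\[
\E\Prb*{Z_{i,n}^2\given\mathcal{G}_{i-1,n}}\leq\frac{C}{n^2}\Bigl(\phi_4(X_{i-1,n}\sqrt{n})^{1/2}+\phi_4(X_{i-1,n}\sqrt{n})\Bigr).
\]

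To verify \eqref{qv}, I would bound conditional variance by second moment and write
\[
\sum_{i=1}^n\E\Prb*{(\chi_i^n)^2\given\mathcal{G}_{i-1,n}}\leq n\sum_{i=1}^n\E\Prb*{Z_{i,n}^2\given\mathcal{G}_{i-1,n}}\leq\frac{C}{\sqrt{n}}\cdot\frac{1}{\sqrt{n}}\sum_{i=1}^n\bigl(\phi_4^{1/2}+\phi_4\bigr)(X_{i-1,n}\sqrt{n}).
\]
The tail bound \eqref{tailphi} makes both $\phi_4$ and $\phi_4^{1/2}$ bounded and rapidly decaying, so Lemma \ref{convskew} applies to each and the inner sum is bounded in probability, while the explicit $n^{-1/2}$ prefactor forces the whole expression to zero. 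For \eqref{ort}, since $\Delta_i^nW$ is $\mathcal{G}_{i-1,n}$-centred, the recentering drops out, and Cauchy-Schwarz combined with $\E[(\Delta_i^nW)^2\mid\mathcal{G}_{i-1,n}]=1/n$ gives
\[
\Bigl|\sum_{i=1}^n\E\Prb*{\chi_i^n\,\Delta_i^nW\given\mathcal{G}_{i-1,n}}\Bigr|\leq\frac{C}{\sqrt{n}}\cdot\frac{1}{\sqrt{n}}\sum_{i=1}^n\bigl(\phi_4^{1/4}+\phi_4^{1/2}\bigr)(X_{i-1,n}\sqrt{n}),
\]
which once again tends to zero in probability via Lemma \ref{convskew}.

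The hard part is really only the moment estimate on $(\Delta_i^nL(X))^4$: once the identity $(|X|,L(X))\eqlaw(|\beta|,L(\beta))$ together with diffusive scaling has reduced it to $n^{-2}\phi_4(X_{i-1,n}\sqrt{n})$, the remainder is a routine Cauchy-Schwarz combined with Lemma \ref{convskew}, and the argument mirrors the treatment of the analogous Brownian quantity $\sqrt{n}\sum H_{i,n}$ carried out in the previous subsection.
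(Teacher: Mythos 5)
Your proof is correct and follows essentially the same route as the paper: apply Theorem \ref{jm} with $B=W$, reduce the conditional moments of the local-time increments to $\phi_4$ (and its fractional powers) via $(|X|,L(X))\eqlaw(|\beta|,L(\beta))$ and scaling, and conclude with Lemma \ref{convskew} using the tail bounds \eqref{tailphi}. The only cosmetic differences are that the paper bounds $\E\Prb*{(X_{i,n}-X_{i-1,n})^4\given\G_{i-1,n}}^{1/2}\leq C/n$ directly rather than through the SDE decomposition, and in \eqref{ort} it uses a three-factor H\"older estimate ending with $\phi_2^{1/2}$ where you reuse your $Z_{i,n}^2$ bound ending with $\phi_4^{1/4}+\phi_4^{1/2}$; both variants are valid.
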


\begin{proof}
We mean to apply Theorem \ref{jm}.
We first prove \eqref{qv}:
\begin{multline*}
\E \Prb*{ (Z_{i,n}- \E \Prb*{ Z_{i,n} \given\G_{i-1,n} })^2 \given \G_{i-1,n} }\\
\leq
\E\Prb*{ (X_{i,n}-X_{i-1,n})^2(L_{i,n}(X)-L_{i-1,n}(X))^2 \given \G_{i-1,n}}\\
\leq
\E\Prb*{ (X_{i,n}-X_{i-1,n})^4 \given \G_{i-1,n}}^{1/2}
\E\Prb*{ (L_{i,n}(X)-L_{i-1,n}(X) )^4 \given \G_{i-1,n}}^{1/2}
\end{multline*}
and we upper bound the two factors. We know
$\E\big[ (X_{i,n}-X_{i-1,n})^4 |
\G_{i-1,n}\big]^{1/2}
\leq
 \frac{C}{n}$.
Recall again that $(|X|,L(X) )\eqlaw(|\b|,L(\b))$, where $\b$ is a BM, and that $\phi_4$ is symmetric.
From \eqref{eqmom},
\[
\E\Prb*{(L_{i,n}(X)-L_{i-1,n}(X))^4\given \G_{i-1,n}}^{1/2} \leq \frac{1}{n}\phi_4(X_{i-1,n}\sqrt{n})^{1/2}.
\]
 Because of \eqref{tailphi}, we apply Lemma \ref{convskew} so that 
\begin{equation*}
n\E \Prb*{ (Z_{i,n}-
\E \Prb*{ Z_{i,n} \given\G_{i-1,n} })^2 \given\G_{i-1,n} }
\leq
 \frac{C}{\sqrt{n}}  \sum_{i=1}^n \frac{1}{\sqrt{n}} \phi_4( X_{i-1} \sqrt{n})^{1/2}\convp[n\to\infty] 0.
\end{equation*}

Since \eqref{obm} has a strong solution,
we take $B=W$, the BM driving \eqref{obm}, in \eqref{ort}. 
Since $\E\Prb*{(W_{i,n}-W_{i-1,n})\given \G_{i-1,n}}=0$, 
\begin{multline*}
\sqrt{n} \sum_{i=1}^n \E\Prb*{
(Z_{i,n}-
\E \Prb*{ Z_{i,n} \given\G_{i-1,n} })(W_{i,n}-W_{i-1,n})
\given \G_{i-1,n}}\\
=
\sqrt{n} \sum_{i=1}^n \E\Prb*{
Z_{i,n}(W_{i,n}-W_{i-1,n})\given \G_{i-1,n}}.
\end{multline*}
Since $Z_{i,n}$ defined in \eqref{eq:Zin} is the product of the increments of $X$ with the ones of the local time,
\begin{multline*}
    \left|\E\Prb*{Z_{i,n}(W_{i,n}-W_{i-1,n})\given \G_{i-1,n}}\right|\leq
\E\Prb*{(X_{i,n}-X_{i-1,n})^4 \given \G_{i-1,n}}^{1/4}\\
\times \E\Prb*{(W_{i,n}-W_{i-1,n})^4 \given \G_{i-1,n}]^{1/4}}
\E\Prb*{(L_{i,n}(X)-L_{i-1,n}(X))^2 \given \G_{i-1,n}}^{1/2}.
\end{multline*}
Now,
\begin{equation*}
\E\Prb*{(W_{i,n}-W_{i-1,n})^4\given \G_{i-1,n}}^{1/4}\leq \frac{1}{\sqrt{n}}\text{ and }
\E\Prb*{(X_{i,n}-X_{i-1,n})^4 \given\G_{i-1,n}}^{1/4}\leq \frac{1}{\sqrt{n}}.
\end{equation*}
From \eqref{eqmom},
$(|X|,L(X) )\eqlaw(|\b|,L(\b))$, with $\b$  BM, and $\phi_2$  symmetric
\[
\E\Prb*{(L_{i,n}(X)-L_{i-1,n}(X))^2 \given \G_{i-1,n}}^{1/2}
\leq 
\frac{1}{\sqrt{n}}\phi_2(X_{i-1,n}\sqrt{n})^{1/2}.
\]
Therefore
\begin{equation*}
    \big|\E\Prb*{Z_{i,n}(W_{i,n}-W_{i-1,n})\given \G_{i-1,n}}\big|\leq
\frac{1}{n\sqrt{n}}\phi_2(X_{i-1,n}\sqrt{n})^{1/2}.
\end{equation*}
From Lemma \ref{convskew},
\begin{multline*}
\sqrt{n} \sum_{i=1}^n 
\big|
\E\Prb*{(Z_{i,n}- \E \Prb*{ Z_{i,n} \given\G_{i-1,n} })(W_{i,n}-W_{i-1,n})\given \G_{i-1,n}}
\big|
\\
\leq
\frac{1}{\sqrt{n}}\left( \frac{1}{\sqrt{n}}\sum_{i=1}^n \phi_2(X_{i-1,n}\sqrt{n})^{1/2}\right)\convp[n\to\infty] 0.
\end{multline*}
Hence the result.
\end{proof}


\subsection{Approximation of occupation time}\label{secot}

In this section we extend the result in \cite{no}, which is proved for diffusions with smooth coefficients, to the OBM.
We consider approximating the occupation time of $[0,\infty)$ up to time $1$:
\[
    Q^+_1=\leb(s\in[0,1]:Y_s\geq 0) = \int_0^1 1_{\{Y_s\geq 0\}} \vd s.
\]
As previously, we suppose that we know the values $Y_{i,n}$ of $Y$ on a grid of time lag $1/n$.

\begin{theorem}\label{ot} Let $Y$ be given in \eqref{obm} and $\bar{Q}_1^n(Y,+)$ be given by \eqref{eot}.
The following convergence holds:
\[
\sqrt{n} \left(\bar{Q}_1^n(Y,+) -
\int_0^t 1_{\{Y_s\geq 0\}}\right)
 \convp[n\to\infty] 0.
\]
\end{theorem}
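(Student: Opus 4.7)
The plan is to write
\[
D_n := \bar{Q}_1^n(Y,+) - Q_1^+ = \sum_{k=1}^n \xi_k, \qquad \xi_k := \int_{(k-1)/n}^{k/n}\bigl(\mathbf{1}(Y_{k/n} \geq 0) - \mathbf{1}(Y_s \geq 0)\bigr)\vd s,
\]
and to decompose each $\xi_k$ into a predictable and a martingale-difference part. By time-homogeneity of the OBM, the conditional expectation $g(y) := \E[\xi_k \mid Y_{(k-1)/n} = y]$ does not depend on $k$. Setting $\zeta_k := \xi_k - g(Y_{(k-1)/n})$, the sequence $(\zeta_k)$ forms a martingale-difference array with respect to $(\mathcal{F}_{k/n})$, and
\[
\sqrt{n}\,D_n = \sqrt{n}\sum_{k=1}^n \zeta_k + \sqrt{n}\sum_{k=1}^n g(Y_{(k-1)/n}).
\]

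For the martingale remainder, I would use the density formulas \eqref{propone} to bound, for $s \in [(k-1)/n, k/n]$ with $k \geq 2$,
\[
\PR(\sgn Y_s \neq \sgn Y_{k/n}) \leq C\sqrt{(k/n - s)/s},
\]
by conditioning on $Y_s$ and combining the bound $\|f_{Y_s}\|_\infty \leq C/\sqrt{s}$ with $\int_0^\infty \Phi(-y/(\sigma_\pm\sqrt{h}))\,\vd y = O(\sqrt{h})$. Integrating gives $\E|\xi_k| \leq C/(n\sqrt{k-1})$, and since $|\xi_k| \leq 1/n$ we get $\E[\xi_k^2] \leq C/(n^2\sqrt{k-1})$. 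Orthogonality of martingale increments then yields $\E\bigl[\bigl(\sqrt{n}\sum_k\zeta_k\bigr)^2\bigr] \leq n\sum_k \E[\xi_k^2] = O(n^{-1/2})$, which tends to $0$.

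For the predictable part, the transition probabilities \eqref{propone}, after the substitutions $u = s - (k-1)/n$ and $v = nu$, give an explicit expression of the form $g(y) = \frac{1}{n}\tilde{G}(y\sqrt{n})$, with
\[
\tilde{G}(z) = \frac{2\sigma_+}{\sigma_++\sigma_-}\int_0^1\bigl[\Phi(-z/(\sigma_+\sqrt{v})) - \Phi(-z/\sigma_+)\bigr]\vd v \quad\text{for } z > 0,
\]
and a symmetric formula for $z<0$; the function $\tilde{G}$ is bounded with Gaussian decay at $\pm\infty$, hence satisfies the integrability hypotheses of Lemma~\ref{convY}, and $\tilde{G}(0)=0$. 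Applying Lemma~\ref{convY} gives
\[
\sqrt{n}\sum_{k=1}^n g(Y_{(k-1)/n}) = \frac{1}{\sqrt{n}}\sum_{k=1}^n \tilde{G}(Y_{(k-1)/n}\sqrt{n}) \convp[n\to\infty] \lambda_\sigma(\tilde{G})\,L_1(Y).
\]
A Fubini computation using $\int_0^\infty \Phi(-u)\,\vd u = 1/\sqrt{2\pi}$ evaluates
$\int_0^\infty \tilde{G} = -\frac{2\sigma_+^2}{3\sqrt{2\pi}(\sigma_++\sigma_-)}$ and $\int_{-\infty}^0 \tilde{G} = +\frac{2\sigma_-^2}{3\sqrt{2\pi}(\sigma_++\sigma_-)}$, so
\[
\lambda_\sigma(\tilde{G}) = 2\Bigl[\frac{\int_0^\infty \tilde{G}}{\sigma_+^2} + \frac{\int_{-\infty}^0 \tilde{G}}{\sigma_-^2}\Bigr] = 0.
\]
Combining the two parts yields $\sqrt{n}\,D_n \to 0$ in probability.

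The hard part will be the exact cancellation $\lambda_\sigma(\tilde{G}) = 0$. A priori the predictable piece contributes a term of order $L_1(Y)/\sqrt{n}$, and only the specific interplay between the OBM transition densities on $\R^\pm$ and the weights $\sigma_\pm^{-2}$ that appear in $\lambda_\sigma$ forces the two contributions to match in magnitude but with opposite sign; without this cancellation the theorem would fail, the limit of $\sqrt{n}D_n$ being instead a nonzero multiple of $L_1(Y)$.
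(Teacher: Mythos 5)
Your proof is correct, and its core coincides with the paper's: the same predictable/martingale decomposition, with the predictable part computed explicitly from the transition probabilities \eqref{propone}, rescaled into the form $\frac{1}{n}\tilde{G}(y\sqrt{n})$, and killed by Lemma~\ref{convY} through the exact cancellation $\lambda_\sigma(\tilde{G})=0$ --- precisely the compensation the paper exploits in Lemma~\ref{ctot}. (Your $\tilde{G}$ differs from the paper's $f$ only because you center the estimator at the right endpoint $Y_{k/n}$, as written in \eqref{eot}, while the paper's proof uses the left endpoint; the extra term $\Phi(-z/\sigma_\pm)$ you carry integrates to $\mp\,2\sigma_\pm^2/(\sqrt{2\pi}(\sigma_++\sigma_-))$ on the two half-lines and also cancels under the weights $2/\sigma_\pm^2$ in $\lambda_\sigma$, the two endpoint conventions differing anyway by an $O(1/n)$ boundary term.) Where you genuinely diverge is the martingale remainder: the paper routes it through its simplified Jacod theorem (Theorem~\ref{jm}), checking the conditional-variance condition \eqref{qv} by a second application of Lemma~\ref{convY} (to $|f|$) and the orthogonality condition \eqref{ort} by Cauchy--Schwarz against the driving Brownian motion; you instead use plain orthogonality of martingale differences and an unconditional $L^2$ bound,
\[
\E\Bigl[\bigl(\sqrt{n}\textstyle\sum_k\zeta_k\bigr)^2\Bigr]\leq n\sum_k\E[\xi_k^2]\leq \frac{1}{n}+\frac{C}{n}\sum_{k\geq 2}\frac{1}{\sqrt{k-1}}=O(n^{-1/2}),
\]
derived from the sign-change estimate $\PR(\sgn Y_s\neq\sgn Y_{k/n})\leq C\sqrt{(k/n-s)/s}$. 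This is more elementary --- it avoids Theorem~\ref{jm} and the orthogonality check entirely --- at the modest cost of one ingredient the paper never states explicitly, namely the bound $\|p(s,0,\cdot)\|_\infty\leq C/\sqrt{s}$ on the OBM transition density, which is immediate from the explicit Keilson--Wellner formulas cited in Section~\ref{sectionOBM}. Your closing observation is also on target: the paper itself stresses, in the remark preceding Lemma~\ref{ctot}, that the theorem hinges on this particular compensation, which is special to this diffusion and is exactly why the drift part cannot be pushed to the faster rate $n^{3/4}$ known for smooth coefficients.
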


For $i=1,\dotsc ,n$, we consider 
\begin{equation}\label{setJU}
\begin{split}
J_{i,n}&= \left(\frac{1}{n} 1_{\{Y_{i-1,n}\geq 0\}}
-
 \int_{\frac{i-1}{n}}^{\frac{i}{n}} 1_{\{Y_s\geq 0\}} \vd s \right)=
\sgn(Y_{i-1,n}) 
\int_{\frac{i-1}{n}}^{\frac{i}{n}} 1_{\{Y_{i-1,n}
 Y_s<0\}} \vd s, \\
U_{i,n}&=J_{i,n}-\E\Prb*{J_{i,n}\given \G_{i-1,n}} .
 \end{split}
 \end{equation}
The $U_{i,n}$ are martingale increments.
We write
\[
\sqrt{n}\left(\bar{Q}_1^n(Y,+) -
\int_0^1 1_{\{Y_s\geq 0\}}\right)
=
\sqrt{n}\sum_{1=1}^n \E\Prb*{ J_{i,n}  \given \G_{i-1,n} }
+
\sqrt{n}\sum_{1=1}^n U_{i,n}.
\]
In the following lemmas we prove the convergence of the two summands. 

\begin{remark}
In \cite{no} it is proved that the estimator times $n^{3/4}$ is tight, so the
speed of convergence proved there, holding only for smooth coefficients, is
faster than the speed proved here. We are actually able to prove that $n^{3/4}$
is the speed of convergence for the martingale part $\sum_{1=1}^n U_{i,n}$ also
for the OBM (and other diffusions with discontinuous coefficients), but not for
the drift part $\sum_{1=1}^n \E\Prb*{ J_{i,n}  \given\G_{i-1,n} }$. We would need the
central limit theorem giving the speed of convergence corresponding to the law
of large numbers proved in Lemma \eqref{convY}, but this looks quite hard to
get in the case of discontinuous diffusion coefficients. Anyways, for our goal
of estimating the parameters of the OBM, the fact that our estimator multiplied
with the ``diffusive scaling'' $\sqrt{n}$ converges to $0$ is enough. Actually,
this result depends on a compensation between two terms in Lemma \ref{ctot}
which holds for this particular diffusion but for which we do not have results
holding for a wider class of SDEs with discontinuous coefficients.
\end{remark}

\begin{lemma}
\label{ctot} With $J_{i,n}$ defined by \eqref{setJU}, the following convergence holds:
\[
\sqrt{n}\sum_{1=1}^n \E\Prb*{ J_{i,n}  \given \G_{i-1,n} }
\convp[n\to\infty] 0.
\]
\end{lemma}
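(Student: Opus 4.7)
The plan is to compute $\E\Prb*{J_{i,n}\given\G_{i-1,n}}$ in closed form using the OBM transition law \eqref{propone}, recast the rescaled sum as $\tfrac{1}{\sqrt{n}}\sum F(Y_{i-1,n}\sqrt{n})$ for a bounded integrable function $F$, and then invoke Lemma~\ref{convY}. The whole content of the lemma will reduce to the algebraic identity $\l_\s(F)=0$, which is the ``compensation'' announced in the remark preceding the statement.

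First, applying the Markov property and \eqref{propone} to the second form of $J_{i,n}$ in \eqref{setJU}, for $Y_{i-1,n}=x>0$ one would compute
\begin{equation*}
\E\Prb*{J_{i,n}\given\G_{i-1,n}} = \int_0^{1/n}\PR_x(Y_u<0)\vd u = \frac{2\s_+}{\s_++\s_-}\int_0^{1/n}\Phi\Bigl(-\frac{x}{\s_+\sqrt{u}}\Bigr)\vd u,
\end{equation*}
and symmetrically for $x<0$. The substitution $u=t/n$ (equivalently the OBM diffusive scaling of Remark~\ref{scaling}) then gives $\sqrt{n}\,\E\Prb*{J_{i,n}\given\G_{i-1,n}} = n^{-1/2}F(Y_{i-1,n}\sqrt{n})$ with
\begin{equation*}
F(y)=\begin{cases}
\dfrac{2\s_+}{\s_++\s_-}\displaystyle\int_0^1\Phi\Bigl(-\dfrac{y}{\s_+\sqrt{t}}\Bigr)\vd t, & y>0, \\[1mm]
-\dfrac{2\s_-}{\s_++\s_-}\displaystyle\int_0^1\Phi\Bigl(\dfrac{y}{\s_-\sqrt{t}}\Bigr)\vd t, & y<0.
\end{cases}
\end{equation*}
The function $F$ is bounded, and the Gaussian tail decay of $\Phi(-\cdot)$ ensures $\int_\R|y|^k|F(y)|\vd y<\infty$ for $k=0,1,2$.

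Summing over $i$, reindexing, and noting that the single extra boundary term $n^{-1/2}F(Y_0\sqrt{n})=O(n^{-1/2})$ is negligible (since $Y_0=0$ and $F$ is bounded), Lemma~\ref{convY} yields
\begin{equation*}
\sqrt{n}\sum_{i=1}^n\E\Prb*{J_{i,n}\given\G_{i-1,n}} \;=\; \frac{1}{\sqrt{n}}\sum_{j=0}^{n-1}F(Y_{j,n}\sqrt{n}) \;\convp\; \l_\s(F)\,L_1(Y).
\end{equation*}
It then remains to verify $\l_\s(F)=0$. Using Fubini, the substitution $y=\s_\pm\sqrt{t}\,u$, together with the standard identities $\int_0^\infty\Phi(-u)\vd u=1/\sqrt{2\pi}$ and $\int_0^1\sqrt{t}\vd t=2/3$, one would obtain
\begin{equation*}
\leb(F^+)=\int_0^\infty F(y)\vd y = \frac{4\s_+^2}{3(\s_++\s_-)\sqrt{2\pi}},\qquad
\leb(F^-)=\int_{-\infty}^0 F(y)\vd y = -\frac{4\s_-^2}{3(\s_++\s_-)\sqrt{2\pi}},
\end{equation*}
and hence $\l_\s(F)=2\bigl(\leb(F^+)/\s_+^2+\leb(F^-)/\s_-^2\bigr)=0$.

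The main subtlety (and essentially the sole obstacle) is precisely this sign reversal of $F$ across $0$: the positive contributions from starting points in $\R^+$ must \emph{exactly} cancel the negative contributions from $\R^-$. Heuristically this reflects the balancing of expected up- and down-crossings of $0$ over a time interval of length $1/n$, but algebraically it depends on the specific form of the OBM transition density --- which is why, as the paper notes, no analogous statement is readily available for more general SDEs with discontinuous coefficients.
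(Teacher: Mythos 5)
Your proposal is correct and follows essentially the same route as the paper's proof: compute $\E\Prb*{J_{i,n}\given\G_{i-1,n}}$ via the Markov property and \eqref{propone}, rescale to write the sum as $n^{-1/2}\sum_i f(Y_{i-1,n}\sqrt{n})$ for the same function $f$ (which the paper also denotes $f$), check the Gaussian-tail integrability, and conclude by Lemma~\ref{convY} together with the cancellation $\l_\s(f)=0$. The only cosmetic difference is that you evaluate $\leb(f^+)$ and $\leb(f^-)$ in closed form, whereas the paper merely exhibits both as $\pm\s_\pm^2$ times a common positive constant before cancelling; the content is identical.
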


\begin{proof}
From \eqref{setJU},
\begin{multline*}
\sqrt{n} \E\Prb*{ \int_{\frac{i-1}{n}}^{\frac{i}{n}}  \sgn(Y_{i-1,n}) 1_{\{Y_{i-1,n}Y_s<0\}} \vd s \given \G_{i-1,n}  }\\
=
\sqrt{n}  \int_{\frac{i-1}{n}}^{\frac{i}{n}}  
\sgn(Y_{i-1,n})  \E \Prb*{1_{\{Y_{i-1,n}Y_s<0\}} \given \G_{i-1,n}  } \vd s
\\
=
 \sqrt{n} \int_{\frac{i-1}{n}}^{\frac{i}{n}}  
 \sgn(Y_{i-1,n}) \PR \Prb*{Y_{i-1,n}Y_s<0 \given \G_{i-1,n}  } \vd s.
\end{multline*}
Using the Markov property and \eqref{propone} we can compute this quantity.
When $Y_0>0$
\begin{multline*}
 \sqrt{n} \int_{0}^{\frac{1}{n}}  
 \sgn(Y_{0}) \PR \left(Y_{0}Y_s<0\right) \vd s
=
 \int_0^{\frac{1}{n}}  
 \frac{2\s_-}{\s_-+\s_+}\Phi(Y_0/(\s_-\sqrt{s}))
  \vd s\\
  =
 \frac{\sqrt{n} 2\s_-}{\s_-+\s_+} \int_0^{1}  
\Phi(Y_0 \sqrt{n}/(\s_-\sqrt{t})) \frac{\vd t}{n} 
=
\frac{1}{\sqrt{n}}  f(Y_0 \sqrt{n}),
\end{multline*}
where
\[
  f(x) := \frac{ 2\s_+}{\s_-+\s_+} \int_0^{1}  
\Phi(-x/(\s_+\sqrt{t})) \vd t\text{ for }x>0. 
\]
Now, for $Y_0<0$, we find
\[
 \sqrt{n} \int_{0}^{\frac{1}{n}}  
 \sgn(Y_{0}) \PR \left(Y_{0}Y_s<0\right) \vd s
=\frac{1}{\sqrt{n}}  f(Y_0 \sqrt{n}),
\]
where
\[
  f(x) := \frac{ -2\s_-}{\s_-+\s_+} \int_0^{1}  
\Phi(x/(\s_-\sqrt{t})) \vd t \text{ for }x<0.
\]
We can compute
\[
\int_0^\infty f(x) \vd x=
 \int_0^\infty
 \int_0^1  
 \frac{2\s_+}{\s_-+\s_+}\Phi(-x/(\s_+\sqrt{s}))
  \vd s \vd x
  =
\frac{2\s_+^2}{\s_-+\s_+} \int_0^\infty
 \int_0^1  
 \Phi(-x/\sqrt{s})
  \vd s \vd x
 \]
and
\[
 \int_{-\infty}^0 f(x) \vd x=
  \frac{-2\s_-^2}{\s_-+\s_+}
 \int_{-\infty}^0
 \int_0^1  
\Phi(x/\sqrt{s})
  \vd s \vd x.
\]
Therefore computing the coefficient in \eqref{la} we find
\[
\l_\s(f)=\frac{2}{\s_+^2} \int f^+
+
\frac{2}{\s_-^2} \int f^-
=0.
\]
Moreover, 
\[
|  f(x)|\leq C \int_0^{1}  
\Phi(C x/\sqrt{t})) \vd t 
\leq C e^{-C x^2}
\]
for some constant $C$, so $\int |x|^k f(x)\vd x<\infty$ for all $k\geq 0$. 
Applying Lemma~\ref{convY}, we prove the statement.
\end{proof}

\begin{lemma}
With $U_{i,n}$ defined by \eqref{setJU}, the following convergence holds:
\[
\sqrt{n} \sum_{i=1}^n  U_{i,n} \convp[n\to\infty]0.
\]
\end{lemma}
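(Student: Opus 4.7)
My plan is to apply Theorem~\ref{jm} to the martingale increments $\chi_i^n := \sqrt{n}\, U_{i,n}$, using the Brownian motion $W$ driving \eqref{obm} as the reference martingale in condition~\eqref{ort}. The idea is to leverage the computation already performed in the proof of Lemma~\ref{ctot}: for $Y_{i-1,n}>0$ we showed
\[
\int_{(i-1)/n}^{i/n}\PR\Prb*{Y_{i-1,n}Y_s<0\given\G_{i-1,n}}\vd s = \frac{1}{n}\,|f(Y_{i-1,n}\sqrt{n})|,
\]
with an analogous identity for $Y_{i-1,n}<0$, and the function $f$ is bounded with $|f(x)|\leq Ce^{-Cx^2}$, so $|f|$ and $|f|^{1/2}$ meet the integrability hypotheses of Lemma~\ref{convY}.

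For condition~\eqref{qv}, I would first observe that $\E[U_{i,n}^2\mid\G_{i-1,n}]\leq \E[J_{i,n}^2\mid\G_{i-1,n}]$ and then apply Cauchy--Schwarz to the integral representation of $J_{i,n}$ in \eqref{setJU}:
\[
J_{i,n}^2\leq \frac{1}{n}\int_{(i-1)/n}^{i/n} \mathbf{1}_{\{Y_{i-1,n}Y_s<0\}}\vd s.
\]
Taking conditional expectation and inserting the identity above gives $\E[J_{i,n}^2\mid\G_{i-1,n}]\leq n^{-2}|f(Y_{i-1,n}\sqrt{n})|$. Hence
\[
n\sum_{i=1}^n\E\Prb*{U_{i,n}^2\given\G_{i-1,n}}\leq \frac{1}{\sqrt{n}}\cdot\frac{1}{\sqrt{n}}\sum_{i=1}^n|f(Y_{i-1,n}\sqrt{n})|,
\]
and the parenthesized factor converges in probability to $\lambda_\sigma(|f|)L_1(Y)$ by Lemma~\ref{convY}, so the whole expression tends to $0$.

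For condition~\eqref{ort} with $B=W$, centering of the Brownian increment yields $\E[U_{i,n}\Delta_i^n W\mid\G_{i-1,n}]=\E[J_{i,n}\Delta_i^n W\mid\G_{i-1,n}]$. Another Cauchy--Schwarz combined with $\E[(\Delta_i^n W)^2\mid\G_{i-1,n}]^{1/2}=1/\sqrt{n}$ and the previous bound on $\E[J_{i,n}^2\mid\G_{i-1,n}]$ gives
\[
\bigl|\E[U_{i,n}\Delta_i^n W\mid\G_{i-1,n}]\bigr|\leq \frac{1}{n\sqrt{n}}\,|f(Y_{i-1,n}\sqrt{n})|^{1/2},
\]
so that
\[
\sqrt{n}\sum_{i=1}^n\bigl|\E[U_{i,n}\Delta_i^n W\mid\G_{i-1,n}]\bigr|\leq \frac{1}{\sqrt{n}}\cdot\frac{1}{\sqrt{n}}\sum_{i=1}^n|f(Y_{i-1,n}\sqrt{n})|^{1/2}\convp[n\to\infty]0,
\]
again by Lemma~\ref{convY} applied to $|f|^{1/2}$. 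Both hypotheses of Theorem~\ref{jm} are verified, so $\sqrt{n}\sum_i U_{i,n}\convp 0$.

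The main subtle point here is that the trivial bound $|J_{i,n}|\leq 1/n$ only yields $n\sum\E[U_{i,n}^2\mid\G_{i-1,n}]=O(1)$, which is not enough; the extra decay must come from recognizing that $\mathbf{1}_{\{Y_{i-1,n}Y_s<0\}}$ has a conditional probability of order $n^{-1/2}f(Y_{i-1,n}\sqrt{n})$, and then using Cauchy--Schwarz to exploit the smallness of the time interval before extracting this decay. Once one sees that the computation from Lemma~\ref{ctot} can be recycled, the rest is just bookkeeping through Theorem~\ref{jm} and Lemma~\ref{convY}.
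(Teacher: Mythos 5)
Your proof is correct and follows essentially the same route as the paper: both verify the two hypotheses of Theorem~\ref{jm} by bounding $\E\Prb*{U_{i,n}^2\given\G_{i-1,n}}$ by $\E\Prb*{J_{i,n}^2\given\G_{i-1,n}}\leq n^{-2}|f(Y_{i-1,n}\sqrt{n})|$ via the computation recycled from Lemma~\ref{ctot}, and then conclude with Cauchy--Schwarz against the increments of $W$ and Lemma~\ref{convY} applied to $|f|$ and $|f|^{1/2}$. The only differences are cosmetic (you center $J_{i,n}$ before applying Cauchy--Schwarz in \eqref{ort}, the paper applies it to $U_{i,n}$ directly), and your observation that the trivial bound $|J_{i,n}|\leq 1/n$ is insufficient correctly identifies the key point.
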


\begin{proof}
We consider \eqref{qv}. We have
\begin{equation*}
\E\Prb*{ U_{i,n}^2\given\G_{i-1,n} }= \E\Prb*{ J_{i,n}^2\given\G_{i-1,n} }
-\E\Prb*{ J_{i,n}\given \G_{i-1,n} }^2\leq \E\Prb*{ J_{i,n}^2\given\G_{i-1,n} }.
\end{equation*}
From \eqref{setJU},
\begin{multline*}
\E\Prb*{ J_{i,n}^2\given \G_{i-1,n} } 
\leq \frac{1}{n} \E\Prb*{ \int_{\frac{i-1}{n}}^{\frac{i}{n}}   1_{\{(Y_{i-1,n}Y_s)<0\}} \vd s \given \G_{i-1,n}  }\\
=\frac{1}{n}
 \int_{\frac{i-1}{n}}^{\frac{i}{n}}  
 \PR \Prb*{Y_{i-1,n}Y_s<0 \given \G_{i-1,n}  } \vd s 
=\frac{1}{n^2}
|f(Y_{i-1,n} \sqrt{n}) |.
 \end{multline*}
Therefore, in \eqref{la},
\[
\l_\s(|f|)=\frac{2}{\s_+^2} \int f^+
-
\frac{2}{\s_-^2} \int f^-
=\frac{4}{\s_-+\s_+}
 \int_{-\infty}^\infty
 \int_0^1  
\Phi(x/\sqrt{s})
  \vd s \vd x.
\]
Lemma \ref{convY} implies
\[
n \sum_{i=1}^n \E\Prb*{ J_{i,n}^2\given \G_{i-1,n} } =  \sum_{i=1}^n \frac{1}{n}
f(Y_{i-1,n} \sqrt{n}) 
\convp[n\to\infty] 0.
\]

Now we check \eqref{ort}. Let $W$ be the BM driving \eqref{obm}.
\begin{multline*}
|\E\Prb*{ U_{i,n}(W_{i,n}-W_{i-1,n})\given  \G_{i-1,n} }|
\leq
\E\Prb*{ U_{i,n}^2\given  \G_{i-1,n} }^{1/2}
\E\Prb*{ (W_{i,n}-W_{i-1,n})^2\given  \G_{i-1,n} }^{1/2}\\
\leq
\frac{1}{n}\sqrt{|f(Y_0\sqrt{n})|}\frac{1}{\sqrt{n}}
\end{multline*}
and
\begin{equation*}
\sqrt{n}|\sum_{i=1}^{[nt]}\E[U_{i,n}(W_{i,n}-W_{i-1,n})| \G_{i-1,n}]|
\leq
\frac{1}{\sqrt{n}}\sum_{i=1}^{[nt]}\frac{1}{\sqrt{n}}\sqrt{|f(Y_0\sqrt{n})|}\convp[n\to\infty]0
\end{equation*}
from another application of Lemma \ref{convY}. The condition of integrability is satisfied because 
\[
\sqrt{|  f(x)|} 
\leq C e^{-C x^2},
\]
for some constant $C$, so $\int |x|^k \sqrt{|  f(x)|} \vd x<\infty$ for all $k\geq 0$.
\end{proof}

\subsection{Proof of the main results}
In this section we use all the results proved so far to prove Theorems \ref{maintheorem} and \ref{maintheorem2}.

\begin{proof}[Proof of Theorem \ref{maintheorem}]
We set
\begin{equation*}
\xi_t= \s_+
\int_0^t \mathbf{1}(Y_s > 0)  \vd W_s\text{ and } 
\eta_t= \s_-
\int_0^t \mathbf{1}(Y_s < 0)  \vd W_s.
\end{equation*}
Itô-Tanaka's formula (see \cite{ry}) gives the following equation for the positive and negative part of~$Y$:
\begin{equation}\label{tanaka}
 Y_t^+= \xi_t + \frac{1}{2} L_t(Y)  , \text{ and }
 Y_t^-= -\eta_t + \frac{1}{2} L_t(Y). 
\end{equation}
Moreover, $\xi$ is a martingale with quadratic variation
\[
\langle \xi\rangle_t = \int_0^t \s(Y_s)^2 \mathbf{1}(Y_s>0) \vd s = \s_+^2\int_0^t  \mathbf{1}(Y_s> 0) \vd s.
\]
It is well known that the quadratic variation of a martingale can be approximated with the sum of squared increments over shrinking partitions. 
Thus, 
\begin{equation}\label{lln}
[\xi]^n_1\convp[n\to\infty] \langle \xi\rangle_1
= \int_0^1 \s_+^2 \mathbf{1}(Y_s> 0) \vd s
= \s_+^2 Q^+_1(Y).
\end{equation}
From \eqref{tanaka},
\begin{equation*}
    \label{est+}
[Y^+]^n_1 = [\xi]^n_1-\frac{[L(Y)]^n_1}{4} +[Y^+,L(Y)]^n_1.
\end{equation*}
The local time $L(Y)$ is of finite variation, $Y^+$ is continuous. Thus 
$[L(Y)]^n_1$ as well as $[Y^+,L(Y)]^n_1$ converge to $0$ almost surely. From \eqref{lln},
\begin{equation}\label{lln+}
[Y^+]^n_1
\convp[n\to\infty] \int_0^1 \s_+^2 \mathbf{1}(Y_s> 0) \vd s =\s_+^2 Q^+_1(Y).
\end{equation}
Recall the definition of $\bar{Q}^n_1(Y,+)$ in \eqref{eot}. Then
\begin{equation*}
    \label{cot}
\bar{Q}^n_1(Y,+)
\convas[n\to\infty]
\int_0^1 \mathbf{1}(Y_s\geq 0) \vd s =Q^+_1(Y).
\end{equation*}
From \eqref{lln+} and \eqref{cot},
$\hat{\s}^n_+ \convp \s_+$,
and similarly
$\hat{\s}^n_- \convp \s_-$.
Therefore, the vector $(\hat{\s}^n_+,\hat{\s}^n_-)$ converges in probability to $(\sigma_+,\sigma_-)$. 
The estimator  $(\hat{\s}^n_+,\hat{\s}^n_-)$ is then consistent. 

We consider now the rate of convergence. From \eqref{tanaka} applied to $Y^-$, we have as in \eqref{est+} 
that
\[
    \begin{pmatrix}
	[Y^+]^n_1
	\\
	[Y^-]^n_1
    \end{pmatrix}
	= 
	\begin{pmatrix} [\xi]^n_1\\ [\eta]^n_1 \end{pmatrix}
+
\begin{pmatrix} [Y^+,L(Y)]^n_1 \\ [Y^-,L(Y)]^n_1 \end{pmatrix}
-
\begin{pmatrix} 1\\ 1 \end{pmatrix} \frac{[L(Y)]^n_1}{4} .
\]
We consider separately the tree summands. From the central limit theorem for martingales (see for example \cite{jp}, (5.4.3) or Theorem 5.4.2), since $\mathbf{1}(Y_s > 0) \mathbf{1}(Y_s < 0)=0$,
\begin{equation*}
\sqrt{n}
\left(
\begin{pmatrix}
[\xi]_1^n & [\xi,\eta]_1^n \\ {}
[\eta,\xi]_1^n & [\eta]_1^n
\end{pmatrix}
-
\begin{pmatrix}
\langle\xi\rangle_1 & 0 \\
0 & \langle\eta\rangle_1
\end{pmatrix}
\right)
\convsl[n\to\infty] 
\sqrt{2}
\int_0^1  
\begin{pmatrix}
 \s^2_+ \mathbf{1}(Y_s > 0) & 0 \\
0 & \s^2_- \mathbf{1}(Y_s < 0)
\end{pmatrix}
\vd\bar{B}_s,
\end{equation*}
where $\bar{B}$ is a Brownian motion independent of the filtration of $W$. 
Therefore it is also independent of $L(Y)$.
Consider now the second summand. 
The OBM $Y$ is linked to a SBM $X$ solution to \eqref{sbm} through  $Y_t=X_t\s(X_t)$.
With \eqref{elt} and \eqref{eqpo} in Lemma~\ref{qcs}, 
\[
    \sqrt{n}
[Y^+,L(Y)]^n_1=
\sqrt{n}
\frac{2\s_+^2\s_-}{\s_++\s_-} [X^+,L(X)]^n_1
\convp[n\to\infty] 0.
\]
Clearly this also holds for $[Y^-,L(Y)]^n_1$, and we obtain
the convergence in probability of  $\sqrt{n} ([Y^+,L(Y)]^n_1,[Y^-,L(Y)]^n_1)$
to $(0,0)$.

We use Lemma~\ref{qvb} for dealing with the third summand:
\begin{multline}
    \label{convL}
 \sqrt{n} [L(Y)]_1^n
=\sqrt{n}\left(\frac{2\s_+\s_-}{\s_++\s_-}\right)^2 [L(X)]_1^n
\\
\convp[n\to\infty]
\frac{4\sqrt{2}}{3\sqrt{\pi}} \left(\frac{2\s_+\s_-}{\s_++\s_-}\right)^2 L_1(X) =
\frac{4\sqrt{2}}{3\sqrt{\pi}} \left(\frac{2\s_+\s_-}{\s_++\s_-}\right) L_1(Y).
\end{multline}
We obtain, using \eqref{scl},
\begin{multline}\label{cc}
\sqrt{n}\left(
\begin{pmatrix}[Y^+]_1^n \\ [Y^-]_1^n\end{pmatrix}-
\begin{pmatrix}\langle\xi\rangle_1 \\ \langle\eta\rangle_1)\end{pmatrix}
\right)
\\
\convsl[n\to\infty]
\begin{pmatrix}
\sqrt{2} \int_0^1 \s^2_+ \mathbf{1}(Y_s > 0)\vd\bar{B}_s \\
\sqrt{2} \int_0^1 \s^2_- \mathbf{1}(Y_s < 0) \vd\bar{B}_s
\end{pmatrix}
-
\begin{pmatrix}
    \vphantom{\int_0^1}  1\\
    \vphantom{\int_0^1}  1
\end{pmatrix}
\frac{\sqrt{2}}{3\sqrt{\pi}} \left(\frac{2\s_+\s_-}{\s_++\s_-}\right) L_1(Y).
\end{multline}
We write now
\begin{multline*}
\begin{pmatrix}
 (\hat{\s}^n_+)^2-\s_+^2 \\
 (\hat{\s}^n_-)^2-\s_-^2
\end{pmatrix}
=
\begin{pmatrix}
    \dfrac{[Y^+]^n_1 -  \s_+^2 \bar{Q}^n_1(Y,+) }{ \bar{Q}^n_1(Y,+) } 
    \\
   \dfrac{[Y^-]^n_1 -  \s_-^2 \bar{Q}^n_1(Y,-) }{ \bar{Q}^n_1(Y,-) } 
\end{pmatrix}
\\
=
\begin{pmatrix}
    \dfrac{[Y^+]^n_1 -  \langle\xi\rangle_1 }{ \bar{Q}^n_1(Y,+) } 
 \\
   \dfrac{[Y^-]^n_1 -  \langle\eta\rangle_1 }{ \bar{Q}^n_1(Y,-) } 
\end{pmatrix}
   +
\begin{pmatrix}
      \dfrac{ \s_+^2(Q^+_1 - \bar{Q}^n_1(Y,+)) }{ \bar{Q}^n_1(Y,+) } 
       \\
      \dfrac{\s_-^2 (1-Q^+_1 -  \bar{Q}^n_1(Y,-)) }{ \bar{Q}^n_1(Y,-) }
\end{pmatrix}.
\end{multline*}
Recall that $\bar{Q}^n_1(Y,+)$ and $\bar{Q}^n_1(Y,-)$ converge
almost surely to $Q^+_1(Y)$ and $Q^-_1(Y)=1-Q^+_1$. Besides, 
$0<Q^+_1<1$ a.s., because $Y_0=0$. Therefore, from Theorem \ref{ot},
\[
\sqrt{n}\begin{pmatrix} \dfrac{ \s_+^2(Q^+_1 - \bar{Q}^n_1(Y,+)) }{ \bar{Q}^n_1(Y,+) } 
    \\
  \dfrac{\s_-^2 (1-Q^+_1 -  \bar{Q}^n_1(Y,-)) }{ \bar{Q}^n_1(Y,-) } \end{pmatrix}
\convp[n\to\infty] \begin{pmatrix} 0 \\ 0 \end{pmatrix}.
\]
Using again \eqref{scl} and \eqref{cc},
\begin{multline}
\sqrt{n}
\begin{pmatrix}
\hat{\s}^n)^2 -  \s_+^2\\
\hat{\s}^n)^2 - \s_-^2  
\end{pmatrix}
\convsl[n\to\infty]
\begin{pmatrix}
\dfrac{\sqrt{2}\s^2_+}{Q^+_1}  \int_0^1 \mathbf{1}(Y_s > 0)\vd\bar{B}_s 
\\
\dfrac{\sqrt{2}\s^2_-}{1-Q^+_1} \int_0^1 \mathbf{1}(Y_s < 0) \vd\bar{B}_s
\end{pmatrix}
-\begin{pmatrix}\dfrac{1}{Q^+_1}\\
    \dfrac{1}{1-Q^+_1}
\end{pmatrix}
 \frac{\sqrt{2}}{3\sqrt{\pi}} \left(\frac{2\s_+\s_-}{\s_++\s_-}\right) L_1(Y).
\end{multline}
The statement is now proved, but we would like to get a more explicit expression for the law of the limit random variable. Recall $Q^+_t(Y)=Q^+_t(X)$.
From Corollary 1.2 in \cite{abtww}, standard computations give that the joint density of $(L_t(X),Q^+_t)$ is, 
for $b>0$, $\tau\in[0, t]$:
\[
p_{L_t(X),Q^+_t}(b,\tau)=\frac{(1-\th^2)b}{4 \pi\tau^{3/2}(t-\tau)^{3/2}} \exp\left(-\frac{(1+\th)^2 b^2 }{8 \tau }-\frac{(1-\th)^2 b^2}{8 (t-\tau)}\right).
\]
We set now 
\[
Z_t=\frac{L_t(X)}{4}\sqrt{\frac{(1+\th)^2}{Q^+_t}+\frac{(1-\th)^2}{t- Q^+_t}}.
\]
Changing variable in the integration, the 
joint density of $(Z_t,Q^+_t)$ is 
\begin{equation}
    \label{j1}
p_{Z_t,Q^+_t}(x,\tau)=
\left(x \exp\left(-\frac{x^2 }{2}\right)\right)\left(\frac{1}{\pi\tau^{1/2}(t-\tau)^{1/2}} \frac{1-\th^2}{(1+\th)^2(t-\tau)+(1-\th)^2\tau } \right).
\end{equation}
We also find the joint density of $(Z_t,t-Q^+_t)$ as
\begin{equation}
    \label{j2}
p_{Z_t,t-Q^+_t}(x,\tau)=
\left(x \exp\left(-\frac{x^2 }{2}\right)\right)\left(\frac{1}{\pi\tau^{1/2}(t-\tau)^{1/2}} \frac{1-\th^2}{(1+\th)^2\tau +(1-\th)^2(t-\tau) } \right).
\end{equation}
As we can factorize $p_{Z_t,Q^+_t}(x,\tau)=p_{Z_t}(x) p_{Q^+_t}(\tau)$, $Z_t$
and $Q^+_t$ are independent and their laws are explicit.
In particular from \eqref{ma}, for $t=1$, $p_{Z_1}(x)= x \exp(-\frac{x^2 }{2})$,
\[
\begin{split}
p_{Q^+_1}(\tau)&=
\frac{1}{\pi\tau^{1/2}(1-\tau)^{1/2}}\times \frac{1-\th^2}{(1+\th)^2(t-\tau)+(1-\th)^2\tau } \\
& =\frac{1}{\pi\tau^{1/2}(1-\tau)^{1/2}}\times \frac{\s_+/\s_-}{1-(1-(\s_+/\s_-)^2)\tau},
\end{split}
\]
and
\[
\begin{split}
p_{1-Q^+_1}(\tau)&=
\frac{1}{\pi\tau^{1/2}(1-\tau)^{1/2}}\times \frac{1-\th^2}{(1+\th)^2\tau+(1-\th)^2(1-\tau) } \\
& =\frac{1}{\pi\tau^{1/2}(1-\tau)^{1/2}}\times \frac{\s_-/\s_+}{1-(1-(\s_-/\s_+)^2)\tau}.
\end{split}
\]
Let now $\L$ be a random variable with the same law of $Q^+_1$,
and let $\xi$ be an independent exponential random variable of parameter $1$. From \eqref{j1}, \eqref{j2}
\[
    \begin{pmatrix}
\frac{1 }{Q^+_1}  
\\
\frac{1 }{1-Q^+_1}  
\end{pmatrix}
 L_1(X)
\eqlaw \frac{4 \xi}{\sqrt{(1-\L)(1+\th)^2+\L(1-\th)^2 }}
 \begin{pmatrix}
\sqrt{\frac{1-\L}{\L}}
\\
\sqrt{\frac{\L}{1-\L}}
\end{pmatrix}
{=}
 \frac{2 (\s_++\s_-)\xi}{\sqrt{(1-\L)\s_-^2+\L\s_+^2 }}
 \begin{pmatrix}
\sqrt{\frac{1-\L}{\L}}
\\
\sqrt{\frac{\L}{1-\L}}
\end{pmatrix}.
\]
Moreover, 
\[
    \begin{pmatrix}
\frac{\sqrt{2}\s_+^2 }{Q^+_1}  \int_0^1  \mathbf{1}(Y_s > 0)\vd\bar{B}_s
\\
\frac{\sqrt{2}\s_-^2}{1-Q^+_1} \int_0^1  \mathbf{1}(Y_s < 0)\vd\bar{B}_s 
\end{pmatrix}
\eqlaw
\begin{pmatrix}
\frac{\sqrt{2}\s_+^2 }{\sqrt{\L}} \mathcal{N}_1
\\
\frac{\sqrt{2}\s_-^2 }{\sqrt{1-\L}} \mathcal{N}_2
\end{pmatrix},
\]
where $\mathcal{N}_1,\mathcal{N}_2$ are standard Gaussian random variables independent of $\xi$ and $\L$.
Therefore the limit law has the expression given in the statement.
\end{proof}

\begin{proof}[Proof of Theorem \ref{maintheorem2}]
Using \eqref{tanaka}, we obtain
\[
[Y^+,Y]^n_1=\left[\xi+\frac{1}{2}L(Y),Y\right]^n_1=
[\xi,Y]^n_1+\frac{1}{2}[L(Y),Y]^n_1.
\]
From the Central Limit Theorem for martingales \cite[Theorem~5.4.2]{jp}
and $\xi_t+\eta_t=Y_t$,
\[
\begin{split}
\sqrt{n}
\begin{pmatrix}
 [\xi,Y]^n_1
\\ \relax
[\eta,Y]^n_1
\end{pmatrix}
&=
\sqrt{n}
\begin{pmatrix}
 [\xi,\xi]^n_1-[\xi,\eta]^n_1
\\ \relax
[\eta,\xi]^n_1-[\eta,\eta]^n_1
\end{pmatrix}
\\
& \convsl[n\to\infty]  
\sqrt{2} \int_0^1
\begin{pmatrix} 
 \s_+^2\mathbf{1}(Y_s > 0)
 \\
- \s_-^2\mathbf{1}(Y_s < 0)
\end{pmatrix}
\vd\bar{B}_s
=
\sqrt{2} \int_0^1
\begin{pmatrix} 
 \s_+^2\mathbf{1}(Y_s > 0)
 \\
\s_-^2\mathbf{1}(Y_s < 0)
\end{pmatrix}
\vd\tilde{B}_s,
\end{split}
\]
where $\tilde{B}$ is another BM independent of the filtration of $W$. Both $W$ and $\tilde{B}$
are defined 
on an extension $(\tilde{\Omega},\tilde{\mathcal{F}},\tilde{\PR})$ of $(\Omega,\mathcal{F},\PR)$ with $\PR'=\PR\otimes\PR'$
where $\PR'$ carries the BM $\tilde{B}$.
Moreover, 
\[
[L(Y),Y]^n_1=[L(Y),Y^+]^n_1-[L(Y),Y^-]^n_1=
\frac{2\s_+^2\s_-}{\s_++\s_-} [X^+,L(X)]^n_1
-\frac{2\s_+\s_-^2}{\s_++\s_-} [X^-,L(X)]^n_1
\]
so $\sqrt{n}[L(Y),Y]\convp 0$ because of Lemma \ref{qcs}. 
Finally,
\begin{equation*}
\sqrt{n}
\begin{pmatrix}
 [Y^+,Y]^n_1
\\ \relax
[Y^-,Y]^n_1
\end{pmatrix}
= 
\sqrt{2} \int_0^1
\begin{pmatrix} 
 \s_+^2\mathbf{1}(Y_s > 0)
 \\
 \s_-^2\mathbf{1}(Y_s < 0)
 \end{pmatrix}
 \vd\bar{B}_s.
\end{equation*}
This is the analogous of \eqref{cc} in the proof of Theorem \ref{maintheorem}. From now on the proof follows as in Theorem \ref{maintheorem}, but without the local time part.
\end{proof}

\begin{remark}\label{remaprlt}
We look for the origin of the asymptotic bias present in $\hat{\s}^n$, but not in $m^n$.
Consider first the difference between the approximation of quadratic variation used in the two different estimators:
\[
[Y^+,Y]_1^n=[Y^+,Y^+]_1^n-[Y^+,Y^-]_1^n = [Y^+]_1^n-[Y^+,Y^-]_1^n.
\]
From \eqref{tanaka},
\begin{multline*}
[Y^+,Y^-]_1^n= -[\xi,\eta]_1^n+\frac{1}{2}[\xi,L(Y)]_1^n-\frac{1}{2}[L(Y),\eta]_1^n+\frac{1}{4}[L(Y)]_1^n \\
= -[\xi,\eta]_1^n+\frac{1}{2}[Y^+,L(Y)]_1^n+
\frac{1}{2}[L(Y),Y^-]_1^n
-\frac{1}{4}[L(Y)]_1^n.
\end{multline*}
From the central limit theorem for martingales \cite[Theorem 5.4.2]{jp},
\[
\sqrt{n}[\xi,\eta]^n_1\convsl[n\to\infty] \sqrt{2} \int_0^1 \s_+\mathbf{1}(Y_s > 0) \s_- \mathbf{1}(Y_s < 0) \vd\bar{B}_s= 
0.
\]
Since $\sqrt{n}[Y^\pm,L(Y)]^n_1$ converges in probability to $0$, 
using \eqref{convL} we obtain 
\begin{equation}\label{aprlt}
\sqrt{n}[Y^+,Y^-]^n_1\convsl[n\to\infty] \frac{2\sqrt{2}}{3\sqrt{\pi}}\frac{\s_+\s_-}{\s_++\s_-}L_1(Y).
\end{equation}
We then see that the asymptotic bias in $\hat{\sigma}^2_\pm$ is related to the 
bracket $[Y^+,Y^-]^n_1$.
\end{remark}


\subsection{Proof of Corollary \ref{girsanov}: adding a drift term via Girsanov Theorem}

Let us start with a remark on the stability of the stable convergence under a Girsanov transform. 

\begin{lemma}\label{changeprobstable}
    For two probability spaces $(\Omega,\mathcal{F},\PR)$ and $(\Omega',\mathcal{F}',\PR')$,
    let us define an extension $(\tilde{\Omega},\tilde{\mathcal{F}},\tilde{\PR})$ by 
of $(\Omega,\mathcal{F},\PR)$ of the form 
\begin{equation*}
    \tilde{\Omega}=\Omega\times \Omega',\ \tilde{\mathcal{F}}=\mathcal{F}\otimes\mathcal{F}'
    \text{ and }
    \tilde{\PR}=\PR\otimes\PR'.
\end{equation*}
Assume that $(\Omega,\mathcal{F},\PR)$ and $(\Omega',\mathcal{F}',\PR')$ carry respectively Brownian motions $W$ and $W'$ 
with natural (completed) filtrations $\mathscr{F}=(\mathcal{F}_t)_{t>0}$
and $\mathscr{F}'=(\mathcal{F}'_t)_{t>0}$. Assume also that $W$ and $W'$ are independent.

On $(\Omega,\mathcal{F},\PR)$, let $G$ be an exponential $\mathscr{F}$-martingale which is uniformly integrable. 
Let $\QR$ be the measure such that $\left.\frac{\vd\QR}{\vd\PR}\right|_{\mathcal{F}_t}=G_t$.

Suppose now that a sequence $Z_n$ on $(\Omega,\mathcal{F},\PR)$ of $\cF_T$-measurable random 
variables converges stably to a random variable $Z=AW'_B$ on the extension $(\tilde{\Omega},\tilde{\mathcal{F}},\tilde{\PR})$
of $(\Omega,\mathcal{F},\PR)$ where $A$ and $B$ are $\cF_T$-random variables on $(\Omega,\mathcal{F},\PR)$.

Then $Z_n$ converges stably to $Z=AW'_B$ on $(\tilde{\Omega},\tilde{\mathcal{F}},\QR\otimes\PR')$
where $W'$ is a Brownian motion independent from $A$ and $B$ (the laws of $A$ and $B$ are of course changed).
\end{lemma}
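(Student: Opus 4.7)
Since $G$ is a uniformly integrable $\mathscr{F}$-martingale, it admits a terminal value $G_\infty\in L^1(\PR)$ with $\left.\tfrac{\vd\QR}{\vd\PR}\right|_{\mathcal{F}}=G_\infty$. Because $G_\infty$ depends only on the $\Omega$-coordinate, the same $G_\infty$ serves as the density of $\QR\otimes\PR'$ with respect to $\tilde{\PR}=\PR\otimes\PR'$ on $(\tilde\Omega,\tilde{\mathcal{F}})$. By the very definition of stable convergence recalled in the paper, proving the lemma amounts to showing that, for every bounded $\mathcal{F}$-measurable random variable $Y$ and every bounded continuous $f$,
\[
\E_\PR\bigl[G_\infty\,Y\,f(Z_n)\bigr]\;\xrightarrow[n\to\infty]{}\;\tilde{\E}_{\tilde{\PR}}\bigl[G_\infty\,Y\,f(Z)\bigr].
\]
The right-hand side is precisely $\tilde{\E}_{\QR\otimes\PR'}[Y f(Z)]$ and the left-hand side is $\E_\QR[Y f(Z_n)]$, so this is the entire content of the claim.

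The key obstacle is that $G_\infty Y$ is only $\PR$-integrable, whereas the hypothesis $Z_n\convsl Z$ under $\PR$ applies directly only to bounded test random variables. I would remove this by truncation. For $N>0$, set $H_N:=G_\infty\wedge N$, so that $H_N Y$ is bounded and $\mathcal{F}$-measurable. Applying the assumption on $Z_n$ to $H_N Y$ gives
\[
\E_\PR[H_N Y f(Z_n)]\;\xrightarrow[n\to\infty]{}\;\tilde{\E}_{\tilde{\PR}}[H_N Y f(Z)]
\]
for each fixed $N$. The error made by replacing $G_\infty$ by $H_N$ is bounded uniformly in $n$ by
\[
\bigl|\E_\PR[(G_\infty-H_N)\,Y\,f(Z_n)]\bigr|\;\leq\;\|Y\|_\infty\|f\|_\infty\,\E_\PR\bigl[G_\infty\,\mathbf{1}_{\{G_\infty>N\}}\bigr],
\]
and analogously on the extended space for the limit term. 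Since $G_\infty\in L^1(\PR)$, these tails tend to $0$ as $N\to\infty$. A standard $3\varepsilon$ argument, choosing $N$ large first and then letting $n\to\infty$, yields the desired convergence.

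It then remains to describe the limit under $\QR\otimes\PR'$. Because the density $G_\infty$ is a function on $\Omega$ alone, the marginal of $\QR\otimes\PR'$ on $\Omega'$ is still $\PR'$, and the product structure of the extension is preserved; hence $W'$ remains a Brownian motion and stays independent of $\mathcal{F}$, and in particular of the $\mathcal{F}_T$-measurable pair $(A,B)$. Consequently the representation $Z=A W'_B$ persists on $(\tilde\Omega,\tilde{\mathcal{F}},\QR\otimes\PR')$, with only the joint $\PR$-law of $(A,B)$ replaced by its $\QR$-law, as stated. The truncation step above is the only non-routine ingredient; everything else is bookkeeping with the Radon--Nikodym derivative and the product structure of the extension.
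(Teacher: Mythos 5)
Your proof is correct and follows essentially the same route as the paper's: reduce stable convergence under $\QR$ to the weighted convergence $\E_{\PR}[G\,Y f(Z_n)]\to\tilde{\E}_{\tilde{\PR}}[G\,Y f(Z)]$ for bounded $\mathcal{F}$-measurable $Y$, then check that $W'$ remains a Brownian motion independent of $(A,B)$ under $\QR\otimes\PR'$. The only differences are that you spell out the truncation/$3\varepsilon$ argument where the paper merely writes that this step ``is easily shown'', and that you justify the final step through the product structure of $\QR\otimes\PR'$ (the density depending only on the $\Omega$-coordinate) rather than through the paper's observation that the bracket $\langle W',W\rangle=0$ is unchanged under a Girsanov transform; both justifications are valid.
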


\begin{proof}
Let us write $\tilde{\QR}=\QR\otimes\PR'$. The Girsanov weight $G_T$ is $\cF_T$-measurable and integrable 
with respect to $\PR$. Hence, it is easily shown that for any bounded, $\cF_T$-measurable random variable $Y$
and any bounded, continuous function $f$, $\mathbb{E}[GYf(Z_n)]$ converges to $\mathbb{E}_{\tilde{\PR}}[GYf(Z)]$.

Under $\tilde{\QR}$, $\langle W',W\rangle =0$ as $W$ and $W'$ are independent and the bracket does
not change under a Girsanov transform. This implies that $W'$ is still a Brownian motion
under $\tilde{\QR}$. Hence the result.
\end{proof}

\begin{proof}[Proof of Corollary \ref{girsanov}]
Let $\xi$ be solution to $\vd \xi_t=\sigma(\xi_t)\vd W_t$
with an underlying Brownian motion $W$ on $(\Omega,\mathcal{F},\PR)$.
We denote by $(\mathcal{G}_t)_{t\geq 0}$ the filtration of $W$.
Thus, $\xi$ is an OBM.

The Girsanov theorem is still valid for discontinuous coefficients \cite{legall}. 
Let us set 
\[
G_t=\exp \left(\int_0^t \frac{b(\xi_s)}{\s(\xi_s)}\vd W_s
    -\frac{1}{2}\int_0^t \left(\frac{b(\xi_s)}{\s(\xi_s)}\right)^2 \vd s\right).
\]
Since $b$ is bounded, we define a new measure $\QR$ by $\left.\frac{\vd \QR}{\vd \PR}\right|_{\mathcal{G}_t}=G_t$.
Under $\QR$, the process $\xi$ is solution to $\vd \xi_t=\sigma(\xi_t)\vd \tilde{W}_t+b(\xi_t)\vd t$
for a Brownian motion $\tilde{W}_t=W_t-\int_0^t b(\xi_s)\sigma(\xi_s)^{-1}\vd s$, $t\geq 0$.

Theorems \ref{maintheorem} and \ref{maintheorem2} hold for $\xi$ under $\PR$.
Therefore, Lemma~\ref{changeprobstable} applies here. Thus, 
\[
 \sqrt{n}
\begin{pmatrix}
(m^n_+(\xi))^2 - \s_+^2\\
(m^n_-(\xi))^2 - \s_-^2
\end{pmatrix}
\xrightarrow[n\to\infty]{\tilde{\QR}\text{-sl}}
\begin{pmatrix}
\frac{\sqrt{2}\s_+^2 }{Q^+_1(\xi)}  \int_0^1  \mathbf{1}(\xi_s > 0)\vd\bar{B}_s
\\
\frac{\sqrt{2}\s_-^2}{1-Q^+_1(\xi)} \int_0^1  \mathbf{1}(\xi_s < 0)\vd\bar{B}_s 
\end{pmatrix},
\]
where $\bar{B}$ is a BM independent of $W$ and $\tilde{W}$ also under $\tilde{\QR}$.
\end{proof}


\section{Oscillating Random Walk}\label{sectionORW}

In \cite{kw} the OBM is constructed also as a limit of discrete processes, called Oscillating Random Walks (ORW),
analogously to how the BM is constructed as a limit of Random Walks. 
The aim of this section is to examplify the phenomena of dependence on the occupation
of the limit law, in a simpler framework and with a non-technical proof.

We define the ORW as the following process. Fix $0<p,q\leq 1$. For $k\in\N$, we introduce the following random variables:
\begin{alignat*}{2}
U_k \text{ iid},\quad &\PR(U_k=1)=\PR(U_k=-1)=\frac{q}{2}, &\quad \PR(U_k=0)=1-q,\\
V_k \text{ iid},\quad & \PR(V_k=1)=\PR(V_k=-1)=\frac{p}{2}, &\quad \PR(V_k=0)=1-p,\\
Z_k \text{ iid},\quad & \PR(Z_k=1)=\PR(Z_k=-1)=\frac{1}{2}.
\end{alignat*}
Now we set $Y^*_0=0$ and
\[
Y^*_{k+1}= \begin{cases}
Y^*_{k}+U_{k+1} & \text{if } Y^*_k> 0, \\
Y^*_{k}+V_{k+1} & \text{if } Y^*_{k}< 0, \\
Y^*_{k}+Z_{k+1} & \text{if } Y^*_{k}= 0.
\end{cases}
\] 
We consider the re-normalized process 
\[
Y^n_t=n^{-1/2} Y^*_{[nt]}.
\]
For all $K>0$, we have the following convergence:
\[
\sup_{0\leq t \leq K} |Y^n_t-Y_t| \convp[n\to\infty] 0.
\]
The convergence in probability holds if the processes $Y^n$ are constructed as in \cite{stone}, and~$Y$ is an OBM of parameters $\s_+^2=q$, $\s_-^2=p$. This means that in this setting we have $0< \s_-,\s_+\leq 1$, but we do not loose in generality because we can always re-scale time and space. 

In this appendix, 
we recover from the observations of $Y^n$ for some large $n$ an estimator for the parameters of the OBM.

We set $\b^n=\#\{k\in\N,\,k\leq n :Y^n_{k/n}> 0\}$, $\a^n=\#\{k\in\N,\,k\leq n:Y^n_{k/n}>0, Y^n_{(k+1)/n}\neq Y^n_{k/n} \}$, and introduce the following estimator of $q=\sigma_+^2$:
\begin{equation}\label{estorw}
\hat{q}^n=\frac{ \a^n}{\b^n}.
\end{equation}

\begin{theorem}
Let $\hat{q}^n$ be the estimator defined above. The following convergence holds:
\[
\sqrt{n} \left(\hat{q}^n-q\right) \convl[n\to\infty] \mathcal{N} \sqrt{ \frac{q(1-q)}{\L}},
\]
where $\L$ follows the law in \eqref{ma}, $\mathcal{N}$ is a standard Gaussian and they are independent.
\end{theorem}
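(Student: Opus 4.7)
The plan is to express $\a^n - q\b^n$ as the terminal value of a martingale with a simple predictable bracket, and then apply a stable central limit theorem for martingales whose limiting bracket is random.

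First, set $\FF_k=\s(U_j,V_j,Z_j:j\le k)$, $I_k=\mathbf{1}(Y^*_{k-1}>0)$ and $X_k=\mathbf{1}(U_k\ne 0)$. By construction of the ORW, when $Y^*_{k-1}>0$ the increment is $U_k$, so $\mathbf{1}(Y^*_{k-1}>0,\,Y^*_k\ne Y^*_{k-1}) = I_k X_k$ and
\[
\a^n - q\b^n \;=\; \sum_{k=1}^{n+1} I_k(X_k-q) \;=:\; M^n_{n+1}.
\]
Because $I_k$ is $\FF_{k-1}$-measurable while $X_k$ is independent of $\FF_{k-1}$ with Bernoulli$(q)$ law, $M^n$ is a martingale with $|M^n_k - M^n_{k-1}|\le 1$ and predictable quadratic variation $\langle M^n\rangle_{n+1}=q(1-q)\b^n$.

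Second, I would work on the coupling of \cite{stone}, for which $\sup_{t\in[0,1]}|Y^n_t-Y_t|\convp 0$ with $Y$ an OBM of parameters $(q,p)$ started at $0$. Since $\leb\{s:Y_s=0\}=0$ almost surely (consequence of the existence of the local time), continuity of the occupation functional yields
\[
\frac{\b^n}{n}\;=\;\int_0^1 \mathbf{1}(Y^n_s>0)\vd s + O(1/n)\;\convp[n\to\infty]\; Q^+_1 \;=:\; \L ,
\]
and by \cite{kw} (equation \eqref{ma}) $\L$ follows the modified arcsine law of the statement. In particular the normalised bracket $n^{-1}\langle M^n\rangle_{n+1}=q(1-q)\b^n/n$ converges \emph{in probability} to $q(1-q)\L$, and the Lindeberg condition is immediate from the uniform bound on the increments. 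The stable martingale CLT with random limiting bracket (see \textit{e.g.} \cite[Theorem VIII.3.32]{js}) then delivers
\[
\frac{M^n_{n+1}}{\sqrt{n}}\;\convsl[n\to\infty]\;\sqrt{q(1-q)\L}\,\mathcal{N},
\]
where $\mathcal{N}\sim N(0,1)$ is carried by an extension of the probability space and is independent of $\L$.

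Finally, since $\L\in(0,1)$ almost surely, Slutsky's lemma applied within the stable convergence framework to the identity
\[
\sqrt{n}(\hat q^n-q)\;=\;\frac{n}{\b^n}\cdot\frac{M^n_{n+1}}{\sqrt{n}}
\]
yields the announced limit $\L^{-1}\sqrt{q(1-q)\L}\,\mathcal{N}=\mathcal{N}\sqrt{q(1-q)/\L}$. The main obstacle is the CLT step: the limiting bracket is a genuine random variable, so one must invoke the \emph{stable}/mixing version in order to be sure that the Gaussian factor $\mathcal{N}$ is independent of $\L$; this in turn forces the preliminary occupation-time convergence to hold in probability rather than merely in law, which is why the Stone coupling is used throughout.
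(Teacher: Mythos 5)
Your proof is correct, and it takes a genuinely different route from the paper's. The paper argues by conditioning: it asserts that, given $\b^n$, the count $\a^n$ is binomial with parameters $(q,\b^n)$, applies the Berry--Esseen bound to this conditional law, and then passes to the limit using only the convergence \emph{in law} $\b^n/n \convl[n\to\infty] \L$ quoted from Keilson and Wellner, together with the Portmanteau lemma; the independence of $\mathcal{N}$ and $\L$ falls out because the conditional error is asymptotically standard Gaussian whatever the value of $\b^n$. You instead write $\a^n-q\b^n=\sum_{k}I_k(X_k-q)$ as a martingale with bounded increments and predictable bracket $q(1-q)\b^n$, and invoke a stable martingale CLT with random limiting bracket; this forces you to upgrade the occupation-time convergence to convergence \emph{in probability} via the Stone coupling, a step the paper never needs, but the nesting condition is indeed trivial here since the filtration does not depend on $n$, and your final Slutsky step combining stable convergence with $n/\b^n\convp 1/\L$ (valid since $\L>0$ a.s.) is sound. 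What your route buys is exactness at the crucial step: the paper's conditional-binomial claim is not literally true at finite $n$, because conditioning on $\b^n$ leaks information about the increments --- for instance when $n=2$ the event $\{\b^2=1\}$ forces $U_2=-1$, hence $\a^2=1$ deterministically rather than $\mathrm{Bin}(1,q)$ --- so your martingale decomposition, which handles this dependence exactly, is the more robust argument. What the paper's route buys is lighter machinery: no coupling and no stable-convergence apparatus, just a quantitative Gaussian approximation conditional on $\b^n$, at the price of that (asymptotically harmless, but imprecise) conditional-independence claim.
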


\begin{proof}
When $Y^n_{k/n}>0$,  $Y^n_{(k+1)/n}\neq Y^n_{k/n}$ with probability $q$, and $Y^n_{(k+1)/n}= Y^n_{k/n}$ with probability $1-q$. We can compute the log-likelihood and maximize it as in the statistics of Binomial variables, finding that the maximum likelihood estimator for $q$ is $\hat{q}^n$ in \eqref{estorw}. In~\cite{kw} it is proved that
\[
\frac{\#\{k\leq n :Y_k\geq 0\}}{n} \convl[n\to\infty] \L,
\]
where $\L$ follows the law in \eqref{ma}. This easily implies
\begin{equation}\label{cabm}
\frac{\b^n}{n} \convl[n\to\infty] \L.
\end{equation}
Conditioning to $\b^n$, we have that $\a^n$ follows is a binomial distribution with parameters $q,\b^n$.
We write the event
\[
\left\{ \sqrt{n} \left(\hat{q}^n-q\right)\leq x \right\}
=
\left\{ \sqrt{ \frac{\b^n}{q(1-q)}}\left(\hat{q}^n-q\right)\leq x \sqrt{ \frac{\b^n}{n q(1-q)}} \right\}.
\]
From Berry-Essen inequality \cite{berry41a,esseen42a}, we have
\[
\left| \PR\Prb*{ \sqrt{n} (\hat{q}^n-q )\leq x \given \b^n}
-
\Phi\left(x \sqrt{ \frac{\b^n}{nq(1-q)}} \right)
\right|
\leq
C_q/\sqrt{\b^n},
\]
for some constant $C_q$. Now, from \eqref{cabm} and Portmanteau Lemma,
\[
\E\Prb*{ \Phi\left(x \sqrt{ \frac{\b^n}{nq(1-q)}}  \right)}
 \xrightarrow[n\to\infty]{}
\E\Prb*{ \Phi\left(x \sqrt{ \frac{\L}{q(1-q)}}  \right)}.
 \]
Moreover,
$\E [C_q/\sqrt{\b^n} ]\rightarrow 0$.
Recalling
\begin{equation*}
\PR\Prb*{ \sqrt{n} (\hat{q}^n-q)\leq x }=
\E\Prb*{ \PR\Prb*{ \sqrt{n} \left(\hat{q}^n-q\right)\leq x\given \b^n}},
\end{equation*}
we obtain the following convergence
\[
    \PR\Prb*{ \sqrt{n} \left(\hat{q}^n-q\right)\leq x }\xrightarrow[n\to\infty]{}
    \E\Prb*{ \Phi\left(x \sqrt{ \frac{\L}{q(1-q)}}  \right)},
\]
which implies the statement.
\end{proof}


\section{Empirical evidence}\label{numerics}

In this section we implement estimators $\hat{\s}^n,m^n$ and use them on simulated data. For doing so, 
we reduce the OBM \eqref{obm} to a SBM \eqref{sbm}, and we simulate it through the simulation 
method given in \cite{lejay_2012}. This method gives the successive positions $\{X_{k \Delta t}\}_{k\geq 0}$
of the SBM, hence the OBM, on a time grid of size $\Delta t$. 

Recall Remark \ref{scaling}, in particular estimators $m^{n,T}_\pm,\hat{\sigma}^{n,T}_\pm$, for which we have central limit theorems with the same limit laws of \eqref{eq:limit:biased},  \eqref{eq:limit:unbiased}. 
We use the parameters:  $T=5$,  $\Delta t=0.01$ (thus $n=500$), $\sigma_-=0.5$, $\sigma_+=2$  (so that $\theta=-0.48$
in \eqref{sbm}, pushing the process to the negative side).
In Figure~\ref{fig:1}, we plot the density of 
\begin{equation*}
M^n_\pm:= \sqrt{n}((m^{n,T}_\pm)^2-\s_\pm^2)\text{ and }
S^n_\pm:= \sqrt{n}((\hat{\mathsf{\sigma}}^{n,T}_\pm)^2-\s_\pm^2)
\end{equation*}
for $N$ realizations of these estimators (meaning the simulation of $N=10\,000$ paths of the OBM). 
Their empirical densities are compared with the ones of 
\begin{equation*}
M_\pm:=\frac{\sqrt{2}\sigma_\pm^2\mathcal{N}}{\sqrt{\Lambda}}
\text{ and }
\begin{pmatrix}
S_+\\
S_-
\end{pmatrix}
:=
\begin{pmatrix}
\frac{\sqrt{2}\s_+^2 }{\sqrt{\L}} 
\left(\mathcal{N}_1
-
 \frac{8}{3\sqrt{\pi}} \frac{1}{r+1}
 \frac{\xi \sqrt{1-\L}
}{\sqrt{(1-\L)+\L r^2 }}\right)
\\
\frac{\sqrt{2}\s_-^2 }{\sqrt{1-\L}} 
\left(\mathcal{N}_2
-
 \frac{8}{3\sqrt{\pi}} \frac{1}{1/r+1}
 \frac{\xi \sqrt{\L}
}{\sqrt{\L+(1-\L)/r^2 }}\right)
\end{pmatrix},
\end{equation*}
given in \eqref{eq:limit:unbiased} and \eqref{eq:limit:biased}, with $\mathcal{N}\sim N(0,1)$ and $\xi\sim\exp(1)$.
The densities of $M$ and $S$ (which do not depend on $T$) are obtained by simulation. 
The occupation time $\Lambda$ is simulated by inverting its distribution function  \cite{lamperti,lejay11}:
\begin{equation*}
    \Lambda\eqlaw \frac{\sigma_-^2 V}{\sigma_-^2 V+\sigma_+^2(1-V)}\text{ with }V=\sin^2\left(\frac{U\pi}{2}\right),
    \ U\text{ uniform on }[0,1).
\end{equation*}

\begin{figure}[ht!]
    \begin{center}
	\includegraphics{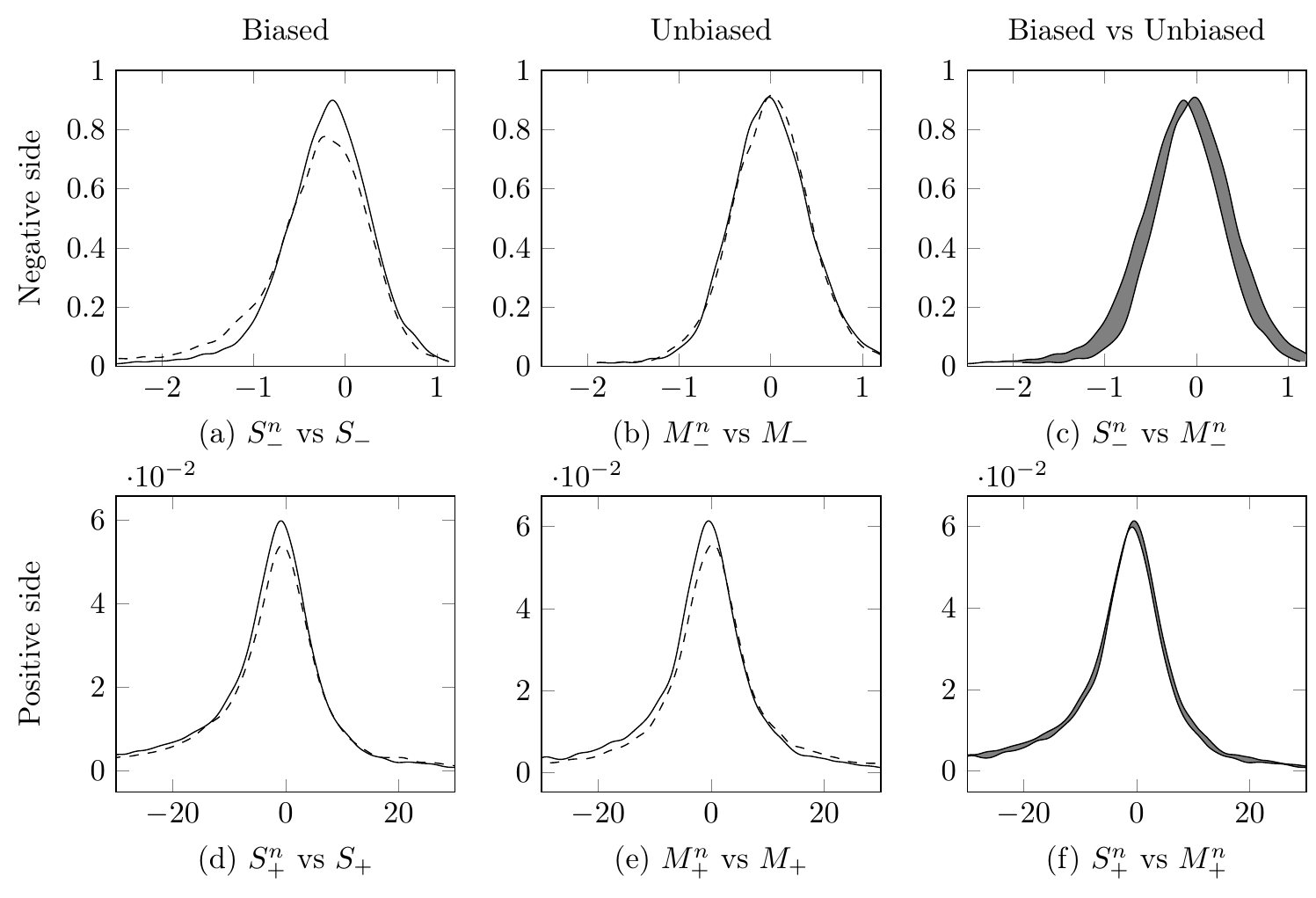}
    \caption{\label{fig:1} Densities of:\\
(a), (d) normalized error of the estimators $M^n_\pm$ (solid line) and theoretical limits $M_\pm$ (dashed lines);\\
(b), (e) normalized error of the estimators $S^n_\pm$ (solid line) and theoretical limits $S_\pm$ (dashed lines);\\
 (c), (f) normalized error of the estimators $M^n_\pm$ and normalized error of the estimators  $S^n_\pm$. }
\end{center}
\end{figure}
We see that the limit distribution on the positive side has a larger variance than the one in the negative side. This is due to the sample size, proportional to the occupation time, which is on average larger on the side where the diffusion coefficient is smaller. We also obtain a good agreement of the normalized empirical error with the prediction of the central limit theorem. On the pictures on the right, we observe the difference between biased and non-biased estimator; the grey area is the effect given by the shift to the left of the distribution, caused by the local time term. This shift is more visible when the diffusion coefficient is smaller.

We have also checked that 
$\sqrt{n}\frac{[Y+,Y^-]_T^n}{ T}$
has a distribution close to the one of 
$\frac{L_T(Y)}{\sqrt{T}}= \frac{\sqrt{2}}{3\sqrt{\pi}}\left(\frac{2\s_+\s_-}{\s_++\s_-}\right)^2 L_1(X)$, 
which is straightforward since the density of $L_1(X)$ is known (for this, we use P.~Lévy's identity which
relates the local time to the supremum of the Brownian motion whose density is explicitly known). 
The agreement is good.

Finally, the same simulation work can be done using the random walks defined in Section~\ref{sectionORW}
using the simple approach. Again, the numerical results are in good agreements with the theory, although 
some instabilities appear due to the fact that the occupation time may take small values with high probability.

\bibliographystyle{plain}

\bibliography{biblio-estimator-obm.bib}

\end{document}